\newtheorem{definition}{Definition}[section]
\newtheorem{thm}[definition]{Theorem}
\newtheorem{lemma}[definition]{Lemma}
\newtheorem{fact}[definition]{Fact}
	\title{The structure of typical eye-free graphs
and a Tur\'an-type result for two weighted colours}
	\date{}
  \author{Peter Keevash\thanks{ 
Mathematical Institute, University of Oxford, Oxford, UK. 
Email: keevash@maths.ox.ac.uk.
Research supported in part by 
ERC Consolidator Grant 647678.} 
 \and William Lochet\thanks{
Computer Science Department, Ecole Normale Sup\'erieure de Lyon, 
France. Email: william.lochet@ens-lyon.fr.}
}  
\newcommand{\mc}[1]{\mathcal{#1}}
\newcommand{\mb}[1]{\mathbb{#1}}
\newcommand{\nib}[1]{\noindent {\bf #1}}
\newcommand{\nim}[1]{\noindent {\em #1}}
\newcommand{\brac}[1]{\left( #1 \right)}
\newcommand{\bcl}[1]{\left\lceil #1 \right\rceil}
\newcommand{\sub}{\subseteq}
\newcommand{\sm}{\setminus}
\newcommand{\ov}{\overline}
\newcommand{\eps}{\varepsilon}
\newcommand{\es}{\emptyset}
\newcommand{\aA}{\alpha}
\newcommand{\bB}{\beta}
\newcommand{\dD}{\delta}
\newcommand{\kK}{\kappa}
\newcommand{\lL}{\lambda}
\newcommand{\sS}{\sigma}
\newcommand{\DD}{\Delta}
\begin{document}

\maketitle

\begin{abstract}
The $(a,b)$-eye is the graph $I_{a,b} = K_{a+b} \sm K_b$
obtained by deleting the edges of a clique of size $b$ from a clique of size $a+b$.
We show that for any $a,b \ge 2$ and $p \in (0,1)$,
if we condition the random graph $G \sim G(n,p)$ on having no induced copy of $I_{a,b}$,
then with high probability $G$ is close to an $a$-partite graph
or the complement of a $(b-1)$-partite graph.
Our proof uses the recently developed theory of hypergraph containers,
and a stability result for an extremal problem with two weighted colours.
We also apply the stability method to obtain an exact 
Tur\'an-type result for this extremal problem.
\end{abstract}

\section{Introduction}

The typical structure of graphs in a monotone or hereditary property
has been the subject of much recent research in Extremal Combinatorics.
Let us first consider a monotone graph property $\mc{P}$, or equivalently, 
the set of graphs $G$ that are \emph{$\mc{H}$-free} for some family of graphs $\mc{H}$,
meaning that $G$ does not contain any $H \in \mc{H}$ as a subgraph.
Let $\mc{P}_n$ denote the set of graphs in $\mc{P}$ with vertex set $[n] = \{1,\dots,n\}$.
A prerequisite for understanding a typical graph in $\mc{P}_n$ is an estimate for $|\mc{P}_n|$,
which in turn is closely linked to $\max_{G \in \mc{P}_n} e(G)$. This leads us back to the
classical theorem of Tur\'an \cite{Turan}, who showed that for $t \ge 2$, the maximum number of edges 
in a $K_{t+1}$-free graph on $[n]$ is uniquely achieved by the complete $t$-partite graph
that is balanced, meaning that its part sizes are as equal as possible.
This implies $|\mc{P}_n| \ge 2^{(1-1/t+o(1))n^2/2}$ when $\mc{P}$ is the property of being $K_{t+1}$-free;
an upper bound of the same form was proved by Erd\H{o}s, Kleitman and Rothschild \cite{EKR}.
More generally, for any $\mc{H}$, Erd\H{o}s, Frankl and R\"odl \cite{EFR}
showed $|\mc{P}_n| = 2^{(1-1/(\chi(\mc{H})-1)+o(1))n^2/2}$,
where the \textit{chromatic number} $\chi(H)$ of a graph $H$ is the minimum $t$ 
such that $H$ is $t$-partite, and $\chi(\mc{H}) = \min_{H \in \mc{H}} \chi(H)$;
here the extremal result is due to Erd\H{o}s, Stone and Simonovits \cite{ES46,ES66}.
Moreover, Kolaitis, Pr\"omel and Rothschild \cite{KPR} determined the typical structure 
of $K_{t+1}$-free graphs: they showed that almost all graphs in $\mc{P}_n$ are $t$-partite.

More recently, Balogh, Morris, Samotij and Warnke \cite{BMSW} considered the same problem
for $K_{t+1}$-free graphs $\mc{P}_{n,m}$ in which we also fix the number $m$ of edges: 
they determined the `critical interval' $I$ for which (roughly speaking)
when $m \in I$ most graphs in $\mc{P}_{n,m}$ are $t$-partite,
and when $m \notin I$ most graphs in $\mc{P}_{n,m}$ are not $t$-partite.
For more general $\mc{H}$ much less is known, particularly if $\mc{H}$
contains a bipartite graph, as we do not even know $\max_{G \in \mc{P}_n} e(G)$.
Results that are as precise as can be expected given this uncertainty
were given by Balogh, Bollob\'as and Simonovits \cite{BBS}:
informally speaking, they show that for a typical $\mc{H}$-free graph
one can delete a constant number of vertices and decompose the rest
into a constant number of parts which are $\mc{M}$-free,
for some (specific) family $\mc{M}$ that contains a bipartite graph.

Now consider a hereditary graph property $\mc{P}$, or equivalently, 
the set of graphs $G$ that are \emph{$\mc{H}$-ifree} for some family of graphs $\mc{H}$,
meaning that $G$ does not contain any $H \in \mc{H}$ as an induced subgraph.
Here the result of Erd\H{o}s, Frankl and R\"odl was generalised
by Alekseev \cite{A} and Bollob\'as and Thomason \cite{BT}:
they showed that $|\mc{P}_n| = 2^{(1-1/\chi_c(\mc{P})+o(1))n^2/2}$,
where $\chi_c(\mc{P})$ is the `colouring number' of $\mc{P}$
(we will not give the definition here, but it is implicit 
in the theory of `types' discussed below).
The general theory was substantially developed by Pr\"omel and Steger (see e.g. \cite{PS1,PS2});
among other things they determined the typical structure of $C_4$-ifree graphs and $C_5$-ifree graphs.
An analogue of the result of Balogh, Bollob\'as and Simonovits (weaker in some details) 
was given by Alon, Balogh, Bollob\'as and Morris \cite{ABBM}.

However, we do not know yet results for hereditary properties
analogous to that of Balogh, Morris, Samotij and Warnke.
Instead of random $\mc{H}$-ifree graphs on $[n]$ with $m$ edges,
one can consider the somewhat related model, which we denote by $G_{n,p}[\mc{H}]$,
obtained by conditioning the Erd\H{o}s-R\'enyi random graph $G_{n,p}$ on being $\mc{H}$-ifree
(in the concluding remarks we will compare the two models
and explain why the latter is easier to analyse).
Note that in the case $p=1/2$ this is a uniformly random $\mc{H}$-ifree graph,
so the typical structure is described by the result of Alon, Balogh, Bollob\'as and Morris.
For general $p$, the first step towards analysing the model
is an estimate for the probability that $G_{n,p}$ is $\mc{H}$-ifree;
this is provided by results of Bollob\'as and Thomason \cite{BT} (see Subsection \ref{igraphs}),
refined and corrected by Marchant and Thomason \cite{Mar11,MT} (see Subsections \ref{types} and \ref{extensions}).

\subsection{The typical structure of eye-free graphs} \label{main1}

Now we will state our first main result, concerning the typical structure of graphs in $G_{n,p}[I_{a,b}]$,
i.e.\ $G_{n,p}$ conditioned on being $I_{a,b}$-ifree, where $I_{a,b} = K_{a+b} \sm K_b$.
First we note two natural constructions of $I_{a,b}$-ifree graphs:
any graph $G$ with $\chi(G) \le a$ or $\chi(\ov{G}) < b$ is $I_{a,b}$-ifree,
where $\ov{G}$ denotes the \emph{complement} graph on $V(G)$, in which a pair of vertices
is an edge of $\ov{G}$ if and only if it is not an edge of $G$.

\begin{thm} \thlabel{eye-free} 
For all $a,b \ge 2$, $p \in (0,1)$ and $\eps>0$ there is $n_0$ such that if $n>n_0$ 
and $G \sim G_{n,p}[I_{a,b}]$ then with probability at least $1-\eps$,
there is $G'$ with $|G \triangle G'| < \eps n^2$ such that $\chi(G')=a$ or $\chi(\ov{G'})=b-1$.
\end{thm}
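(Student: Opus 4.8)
The plan is to combine the hypergraph container method with a stability result for a two-coloured extremal problem, following the now-standard template for determining the typical structure of graphs in $G_{n,p}[\mc{H}]$. First I would recall, via the results of Bollob\'as--Thomason and Marchant--Thomason, that $\Pr[G_{n,p} \text{ is } I_{a,b}\text{-ifree}]$ is of the form $2^{-(1-1/\chi_c+o(1))n^2/2}$ for the appropriate colouring number $\chi_c$ of the property, and moreover that the relevant colouring patterns (``types'') achieving this exponent are exactly two: one in which $G$ is (close to) $a$-partite, and one in which $\ov{G}$ is (close to) $(b-1)$-partite. Concretely, an $I_{a,b}$-ifree graph has no induced $K_{a+b} \sm K_b$; the extremal templates are $a$ cliques' worth of ``all-edges'' colours together with at most $b-1$ ``all-non-edges'' colours is forbidden to be too large, and one checks that the entropy-optimal template uses either $a$ edge-classes (giving an $a$-partite-like structure, since within a class everything is a clique one still cannot have an induced $I_{a,b}$ once we are $a$-partite) — actually one must be careful: the construction $\chi(G)\le a$ works because any $a+b$ vertices in an $a$-partite graph contain $b$ vertices in a common part forming an independent set, but $I_{a,b}$'s non-edge-clique $K_b$ sits inside a $K_{a+b}$, contradiction; similarly $\chi(\ov G)<b$ works by the complementary count. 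The container method then reduces the problem to showing that every graph $G$ which is ``container-typical'' — i.e.\ lies in one of a subexponential family of containers, each $I_{a,b}$-ifree-ish and of edge-count close to the extremal value — is $\eps$-close to one of these two structures.

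The core of the argument is therefore an \textbf{approximate structure / stability statement}: if $G$ is $I_{a,b}$-ifree and $G_{n,p}$-typical in the sense of having roughly the extremal ``weighted density'' (here ``weight'' records that an edge is present in $G_{n,p}$ with probability $p$ and absent with probability $1-p$, so that the two colour classes carry weights $\log_2(1/p)$ and $\log_2(1/(1-p))$), then $G$ is within $\eps n^2$ edges of a graph $G'$ with $\chi(G')=a$ or $\chi(\ov{G'})=b-1$. I would prove this by first applying a regularity/removal-type argument: pass to a regular partition, throw away irregular and low-density pairs, and observe that the reduced weighted graph cannot contain a configuration that would force an induced $I_{a,b}$ inside $G$ once we embed using supersaturation in the dense regular pairs. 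This pins the reduced graph down to one of finitely many ``admissible colour patterns'' on a bounded number of parts; the weighted entropy maximisation (the Tur\'an-type computation with two weighted colours alluded to in the abstract) then forces the pattern to be essentially the all-edges-$a$-partite pattern or the complementary one. Undoing the regularity reduction converts ``essentially that pattern'' into ``$\eps n^2$-close to $\chi\le a$ or $\chi(\ov{\cdot})\le b-1$''; a short cleaning step upgrades $\chi \le a$ to $\chi = a$ (and likewise $b-1$) by noting that a graph with $\chi(G') < a$ and $\chi(\ov{G'}) \ge b$ is still $I_{a,b}$-ifree but can be perturbed within $\eps n^2$ edges to have chromatic number exactly $a$ without leaving the property, or else is already closer to the other template.

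The main obstacle I expect is the \textbf{two-weighted-colour extremal/stability computation}: unlike the classical unweighted case, one is maximising a weighted entropy over colourings of $K_n$ in which certain small sub-patterns (those that would create an induced $I_{a,b}$) are forbidden, and both the optimal value and the structure of near-optimal colourings depend on $a$, $b$ and $p$ in a way that is not symmetric between the two colours. Establishing that the optimum is attained \emph{only} by the two claimed families — and, crucially, that it is \emph{stable}, i.e.\ near-optimal colourings are near one of these families rather than near some interpolating hybrid — requires a careful convexity/smoothing argument showing that merging colour classes or shifting weights strictly increases entropy unless the pattern already has the extremal form, and that no ``mixed'' pattern using both an all-edges block and an all-non-edges block can compete (this is where the asymmetry is genuinely used and where a naive symmetrisation fails). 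Once this stability lemma is in hand, the container and regularity machinery are routine, and the matching lower bound construction (the two explicit families) gives the other direction, completing the proof.
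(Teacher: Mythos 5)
Your high-level strategy matches the paper's: use hypergraph containers to reduce the typical-structure statement to a supersaturated stability result for $I_{a,b}$-free igraphs that are nearly extremal in $p$-weight, prove stability for the associated two-coloured extremal problem, and finish with a short cleaning step to upgrade $\chi \le a$ (resp.\ $\chi(\ov{\cdot}) \le b-1$) to equality. The route you propose through the stability step, however, differs from the paper's, and the crucial piece of it is not supplied.

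You propose a regularity-lemma reduction: regularise the two-coloured template, discard bad pairs, read off a bounded reduced igraph whose admissible patterns are constrained because an embeddable forbidden configuration would produce an induced $I_{a,b}$, and then optimise the weighted density over admissible patterns. The paper avoids an explicit regularity partition at this stage (it only uses an igraph removal lemma to pass to whitened igraphs) and instead builds the structured pattern directly via Marchant--Thomason's theory of types and $p$-core types: one repeatedly extends a blowup of a type inside the dense igraph (Lemmas~\ref{extend} and~\ref{extendlarge}) until one reaches a type of large $p$-value or a type with too many vertices, and the structure theorem for $p$-core types (Theorem~\ref{coreType}) together with Lemma~\ref{uniquetype} forces the outcome to be a blowup of $\tau(0,a)$ or $\tau(b-1,0)$; a double-counting argument (Lemmas~\ref{stabpart2a} and~\ref{stabpart2b}) then extends that blowup to almost all vertices. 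Your regularity route could in principle reach the same bounded-pattern stage, but at the cost of an igraph regularity/counting lemma and a further stability argument for the reduced pattern; the type-extension machinery is tailor-made for this setting and gets there more directly.

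The genuine gap is in the step you yourself flag as the main obstacle: showing that among all admissible bounded colour patterns the two claimed families are the unique optimizers, and that near-optimal patterns are close to one of them, with no competitive ``mixed'' pattern. You sketch a convexity/smoothing heuristic but do not prove it, and it is not a routine computation: in the paper this is precisely Lemma~\ref{uniquetype}, whose proof leans on Theorem~\ref{coreType} (a $p$-core type has all edges green except, for $p<1/2$, possibly blue edges between red vertices, and dually for $p>1/2$), the variational identity of Fact~\ref{vary}, and a green-degree bound forced by $I_{a,b}$-freeness. Moreover the claim is delicate: it is \emph{false} for $a=1$ at $p=1/b$, where a whole family of further extremal types appears (see the paper's concluding remarks), so any smoothing argument must visibly use $a \ge 2$. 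Without this uniqueness-and-stability lemma, the reduction you describe has nothing to land on, and as written your proposal contains no replacement for it.
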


Note that there is no loss of generality in Theorem \ref{eye-free} in assuming $a \ge 1$ and $b \ge 2$;
otherwise $I_{a,b}$ is a clique or an independent set, which are cases covered by results mentioned above.
One may still wonder about the assumption $a \ge 2$. We do not know whether the result holds for $a=1$,
and in fact, we will see below that there are good reasons to think that it does not.
Henceforth we assume $a,b \ge 2$.

\subsection{Hereditary properties and extremal igraph theory} \label{igraphs}

One can reformulate containment of induced subgraphs using two-coloured complete graphs.
Given a graph $H$, let $c(H)$ be the two-coloured complete graph on $V(H)$,
where edges of $H$ are coloured red and non-edges of $H$ are coloured blue.
Given a complete graph $C$ with edges coloured red or blue, we write $C_r$ for the graph 
of red edges and $C_b$ for the graph of blue edges. Then $H$ is an induced subgraph of $G$
if and only if we can identify $V(H)$ with a subset of $V(G)$
so that $c(H)_r \sub c(G)_r$ and $c(H)_b \sub c(G)_b$.

To represent natural families of graphs with forbidden induced subgraphs we introduce 
a third colour, green, which should be thought of as a `wildcard', meaning `red or blue'. 
Suppose $C$ is a complete graph with edges coloured red, blue or green;
for brevity we will call $C$ an \textit{igraph}
(the `i' is for `induced', looking ahead to Theorem \ref{containers}).
We also view $C$ as a multigraph with edges coloured red or blue,
in which there is one edge for any edge coloured red or blue,
but two edges (one red and one blue) for any edge coloured green.
We stress that the meaning of `red' and `blue' depends on which viewpoint we adopt:
in the three-coloured viewpoint `red' means `red' and `blue' means `blue';
in the two-coloured viewpoint `red' means `red or green' and `blue' means `blue or green'.

Suppose $C$ is an igraph. We let $C^-_r$, $C^-_b$, $C_g$ respectively
be the graphs of red, blue, green edges in the three-coloured viewpoint.
We let $C_r = C^-_r \cup C_g$ and $C_b = C^-_b \cup C_g$ 
be the graphs of red and blue edges in the two-coloured viewpoint.
We let $\mc{G}(C)$ be the set of graphs $G$ on $V(C)$
with $C^-_r \sub G$ and $C^-_b \sub \ov{G}$.

We say that $C$ contains a graph $H$ and write $H \sub C$
if we can identify $V(H)$ with a subset of $V(C)$
so that $H \sub C_r$ and $\ov{H} \sub C_b$.
Equivalently, $C$ contains $H$ if it contains $c(H)$ in the two-coloured viewpoint,
so henceforth we simplify notation by using $H$ to denote $H$ or $c(H)$ depending on the context.
We say that $C$ is $H$-free if it does not contain $H$.
Note that $C$ is $H$-free if and only if
$H$ is not an induced subgraph of any $G \in \mc{G}(C)$.
For a family of graphs $\mc{H}$ we say that $C$ is $\mc{H}$-free
if it does not contain any $H \in \mc{H}$.

Now, following Richer \cite{R} and Marchant and Thomason \cite{MT},
we consider the following extremal problem for igraphs, 
which is useful for counting $H$-free graphs,
and is also of interest in its own right
as a generalisation of extremal graph theory.
Given an igraph $C$ and $p \in [0,1]$,
we define the \textit{$p$-weight} of $C$ by 
\[w_p(C) = p|C_r| + (1-p)|C_b| = p|C^-_r| + (1-p)|C^-_b| + |C_g|.\]
Equivalently, the $p$-weight of $C$ is the total weight of all edges, where,
in the two-coloured viewpoint, red edges have weight $p$ and blue edges have weight $1-p$.
Given a graph family $\mc{H}$ and $n \ge 1$, we let $\text{kex}_p(\mc{H},n)$
be the maximum of $w_p(C)$ over all $\mc{H}$-free igraphs $C$ on $[n]$.
The \textit{induced $p$-density} of $\mc{H}$ 
is $\kK_p(\mc{H}) = \lim_{n \to \infty} \tbinom{n}{2}^{-1} \text{kex}_p(\mc{H})$
(it is not hard to see that the limit exists).
We may omit $p$ from our notation if it is clear from the context.
When $\mc{H} = \{H\}$ is a single graph we identify it with $H$.

To see the connection of hereditary properties to extremal igraph problems,
we note that if $G \sim G_{n,p}$ and $C$ is an igraph on $[n]$
then $\mb{P}(G \sub C) = p^{|C^-_r|} (1-p)^{|C^-_b|} = 2^{-H_p(C)}$, defining
\[ H_p(C) = - |C^-_r|\log_2 p - |C^-_b|\log_2 (1-p)
= - (\log_2 p + \log_2 (1-p)) (\tbinom{n}{2} - w_{p'}(C)),\]
where $p' = \tfrac{\log_2 (1-p)}{\log_2 p + \log_2 (1-p)}$. Thus 
\[\mb{P}(G \text{ is } \mc{H}\text{-ifree}) 
\ge 2^{(\log_2 p + \log_2 (1-p)) (1-\kK_{p'}(\mc{H})+o(1)) \tbinom{n}{2}}.\]
On the other hand, an upper bound of the same form
follows from a result of Bollob\'as and Thomason \cite{BT}.

When viewed as an extremal problem for two-coloured multigraphs,
it is perhaps more natural to allow incomplete extremal graphs,
i.e.\ to allow pairs that are neither red nor blue.
This version of the problem was also considered by Richer \cite{R} (see also \cite{MT})
and is the subject of a conjecture of Diwan and Mubayi \cite{DM} (see the concluding remarks).
To model it by colours, we define a \emph{whitened igraph} $C$
to be a complete graph with edges coloured red, blue, green or white;
the interpretation of a white pair is that it is `missing':
it does not contribute to $w_p(C)$ and cannot be used to form a copy of $H$.
Even if we are only interested in igraphs, our use of the removal lemma
(see Subsection \ref{removal}) forces us to consider whitened igraphs.

\subsection{Extremal eye-free igraphs}

Henceforth we specialise to case when $H$ is the $(a,b)$-eye $I_{a,b}$, which we may think of as a graph,
or as a two-coloured $K_{a+b}$ where some $K_b$ is blue and the remaining edges are red. 
Marchant and Thomason \cite[Example 5.5]{MT} showed that 
\begin{equation}\label{eqk}
\kK_p(I_{a,b}) = \max\{1-\tfrac{p}{a},1-\tfrac{1-p}{b-1}\}.
\end{equation}
The lower bounds are given by the following two constructions
corresponding to those appearing in Theorem \ref{eye-free}.
We let $\mc{B}_a(n)$ be the set of igraphs $C$ on $[n]$
consisting of $a$ disjoint blue cliques joined by green edges,
and $B_a(n) \in \mc{B}_a(n)$ be the igraph 
in which the clique sizes are as equal as possible;
we define $\mc{R}_{b-1}(n)$ and $R_{b-1}(n)$ similarly,
replacing `blue' by `red'.

Note that $\chi(G) \le a$ if and only $G \in \mc{G}(C)$ for some $C \in \mc{B}_a(n)$
and $\chi(\ov{G}) \le b-1$ if and only $G \in \mc{G}(C)$ for some $C \in \mc{R}_{b-1}(n)$.
As described above, (\ref{eqk}) implies an asymptotic formula for $\log_2 \mb{P}(G_{n,p} \text{ is } I_{a,b}\text{-ifree})$.
We will obtain the more precise result in Theorem \ref{eye-free} from the following `supersaturated stability' theorem.
For the statement we introduce some notation describing the approximately extremal igraphs. Let
\begin{equation*}
\mc{G}_p(n) = \begin{cases} 
\mc{B}_a(n) & \text{ if } p<\tfrac{a}{a+b-1}, \\
\mc{R}_{b-1}(n) & \text{ if } p>\tfrac{a}{a+b-1}, \\
\mc{B}_a(n) \cup \mc{R}_{b-1}(n) & \text{ if } p=\tfrac{a}{a+b-1}. \\
\end{cases}
\end{equation*}

\begin{thm}\thlabel{superstability}
For every $\eps>0$, $a,b \ge 2$, and $p \in (0,1)$ there exists $n_0$ and $\dD>0$, 
such that for any igraph $G$ on $n > n_0$ vertices with fewer than 
$\dD n^{a+b}$ copies of $I_{a,b}$ and $w_p(G) > (\kK_p(I_{a,b}) - \dD)\tbinom{n}{2}$
we can modify at most $\eps n^2$ edges of $G$ to obtain $G' \in \mc{G}_p(n)$.
\end{thm}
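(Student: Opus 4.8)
The plan is to prove Theorem \ref{superstability} by the standard stability machinery, adapted to the two-weighted-colour setting. First I would fix $\eps>0$ and choose constants in the hierarchy $1/n_0 \ll \dD \ll \eta \ll \eps$, where $\eta$ will be an intermediate ``approximate'' error parameter. Let $G$ be an igraph on $n$ vertices with at most $\dD n^{a+b}$ copies of $I_{a,b}$ and $w_p(G)>(\kK_p-\dD)\binom n2$. The first move is a \emph{cleaning step}: since $G$ has few copies of $I_{a,b}$, by the hypergraph (or graph) removal lemma applied in the two-coloured viewpoint, we can whiten (delete) at most $\eta n^2$ edges of $G$ to obtain a whitened igraph $G_0$ that is genuinely $I_{a,b}$-free. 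Because each edge carries $p$-weight at most $\max\{p,1-p\}\le 1$, whitening $\eta n^2$ edges costs at most $\eta n^2$ in $p$-weight, so $G_0$ is still $I_{a,b}$-free with $w_p(G_0)>(\kK_p - \dD - 2\eta)\binom n2$, i.e. within $O(\eta)$ of extremal.

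**Reducing to the exact extremal configuration.**

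The heart of the argument is then a \emph{stability statement for the exact extremal problem}: any $I_{a,b}$-free whitened igraph on $n$ vertices whose $p$-weight is within $\eta\binom n2$ of $\kK_p\binom n2$ can be turned into a member of $\mc{G}_p(n)$ by modifying at most $\eps n^2$ edges. I would prove this directly. Since $G_0$ is $I_{a,b}$-free, in the two-coloured viewpoint $(G_0)_r$ is $K_{a+1}$-free on the non-white pairs and, more importantly, $G_0$ contains no $K_{a+b}$ in which some $K_b$ is all-blue; this is a joint colouring constraint. The key structural observation, following the computation behind \eqref{eqk} (Marchant--Thomason), is that extremality forces essentially no green-free ``mixed'' behaviour: a near-extremal igraph must look like either $a$ blue cliques glued by green (when $p$ is small, so green's weight $1$ beats blue's $1-p$ appropriately) or $b-1$ red cliques glued by green. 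Concretely, I would use a weighted version of the standard argument: define the ``red graph'' $R = (G_0)^-_r$, ``blue graph'' $B=(G_0)^-_b$, ``green graph'' $\GG = (G_0)_g$; the $I_{a,b}$-freeness says $R$ has no $K_a$ whose vertex set spans a $B$-clique in its complement-side — more precisely, there is no set $S$ of $a+b$ vertices with an $a$-subset complete in $C_r$ and the complementary structure blue. One extracts that either $\chi$ of the ``red-or-green'' graph is at most... — rather, the clean way is: apply the Erdős--Stone/Simonovits stability theorem to an auxiliary uncoloured graph. Form the graph $F$ on $V(G_0)$ whose edges are the green pairs; being $I_{a,b}$-free forces $F$ together with the red (resp. blue) pairs to avoid a fixed small forbidden configuration, and a supersaturation/stability argument for that configuration pins down $F$ as close to a complete multipartite graph with $a$ parts (or the complement structure with $b-1$ parts).

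**Symmetrisation and the main obstacle.**

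To decide \emph{which} of the two configurations occurs and to upgrade ``close to multipartite'' into ``exactly of the form $\mc{B}_a$ or $\mc{R}_{b-1}$ after $\eps n^2$ changes'', I would run a local-switching/symmetrisation step: for each vertex, compare its red-, blue-, green-degrees into the emerging parts, and show that reassigning it to maximise $p$-weight — which, by the near-extremality, it essentially already does — turns all non-green edges inside parts into blue (if we are in the $\mc{B}_a$ regime) and forces all cross-part edges green. The weight comparison $p$ versus $1-p$ versus $1$ is exactly what selects $\mc{B}_a(n)$ when $p<\tfrac{a}{a+b-1}$ and $\mc{R}_{b-1}(n)$ when $p>\tfrac{a}{a+b-1}$, matching the definition of $\mc{G}_p(n)$; at the threshold both are within the error and either is acceptable. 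Finally, undoing the whitening (recolouring the $\le\eta n^2$ whitened pairs arbitrarily according to the target configuration) adds only $\eta n^2 < \eps n^2 /2$ further modifications, so the total number of edge changes from $G$ to the resulting $G'\in\mc{G}_p(n)$ is at most $\eps n^2$.

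The step I expect to be the main obstacle is the \emph{exact stability for the igraph extremal problem} — proving that near-extremal $I_{a,b}$-free whitened igraphs are $\eps n^2$-close to $\mc{G}_p(n)$ — because one must carefully handle the interaction of the three colours and the whitened pairs simultaneously, and because the weighting means one cannot simply cite an off-the-shelf stability theorem; the symmetrisation must track the $p$-weight of each vertex's neighbourhood rather than just its degree. Controlling the white pairs (which behave like a fourth, weightless colour that still cannot be freely used to build $I_{a,b}$) throughout the switching argument, and ensuring the switching does not create new copies of $I_{a,b}$, is the delicate part; I would isolate it as a self-contained lemma about whitened igraphs.
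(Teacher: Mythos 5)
Your high-level plan is exactly the paper's: apply an igraph removal lemma to whiten $o(n^2)$ pairs and obtain a genuinely $I_{a,b}$-free whitened igraph, then invoke a stability theorem for near-extremal $I_{a,b}$-free whitened igraphs (the paper's Lemma \ref{removal} and Theorem \ref{stability}), and absorb the whitened pairs into the final edit budget. That reduction is correct and matches the paper.

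The genuine gap is in your sketch of the stability theorem itself, which is where essentially all the work lies. You propose to isolate an auxiliary uncoloured graph (e.g.\ the green graph $F=G_g$) and apply Erd\H{o}s--Stone--Simonovits stability to it, but no single forbidden subgraph of any single auxiliary graph captures $I_{a,b}$-freeness: containing $I_{a,b}$ is a joint condition across all three colours (a $b$-set that is pairwise blue-or-green and an $a$-set that together with its attachments is pairwise red-or-green). In particular $G_g$ alone is only forced to be $K_{a+b}$-free, and near-extremality gives it density $\approx 1-1/a$ (or $1-1/(b-1)$), which is strictly \emph{below} the Tur\'an threshold $1-1/(a+b-1)$; so $K_{a+b}$-free stability says nothing about $G_g$ being close to $a$-partite. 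This is precisely why the paper cannot cite an off-the-shelf stability theorem and instead uses the Marchant--Thomason theory of $p$-core types: it first locates, inside the whitened igraph, a bounded blowup of the unique extremal type ($\tau(0,a)$ or $\tau(b-1,0)$) by iterating the extension lemmas (Lemmas \ref{extend}, \ref{extendlarge}, and \ref{stabpart1}, with the uniqueness of the extremal type coming from Lemma \ref{uniquetype}), and only then runs a degree/double-counting argument (Lemmas \ref{stabpart2a}, \ref{stabpart2b}) to propagate that blowup's partition to all but $o(n)$ vertices. Your symmetrisation step is in the spirit of this second phase, but it has no reference partition to symmetrise against without first constructing the anchor blowup. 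Also, a minor point: you worry about the switching creating new copies of $I_{a,b}$, but the target $G'\in\mc{G}_p(n)$ is automatically $I_{a,b}$-free, so only intermediate comparisons (not the final object) need that care.
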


We also deduce our second main result,
which is the following exact Tur\'an-type result for igraphs.

\begin{thm}\thlabel{exact}
For any $a,b \ge 2$, $p \in (0,1)$ and $n$ sufficiently large, 
the $I_{a,b}$-free igraphs on $[n]$ with maximum $p$-weight are
\begin{itemize}[nolistsep]
	\item $R_{b-1}(n)$ if $p > \tfrac{a}{a+b-1}$ or $p = \tfrac{a}{a+b-1}$ and $a < b-1$,
	\item $B_a(n)$ if $p < \tfrac{a}{a+b-1}$ or $p = \tfrac{a}{a+b-1}$ and $a > b-1$,
	\item $R_{b-1}(n)$ and $B_a(n)$ if $p = \tfrac{a}{a+b-1} = 1/2$.
\end{itemize}
\end{thm}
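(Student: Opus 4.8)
The plan is to derive the exact result (Theorem \ref{exact}) from the supersaturated stability result (Theorem \ref{superstability}) by the now-standard \emph{stability-to-exactness} bootstrapping argument. Fix $a,b\ge 2$, $p\in(0,1)$, and let $C$ be an $I_{a,b}$-free igraph on $[n]$ with $w_p(C)=\text{kex}_p(I_{a,b},n)$. Since $C$ is $I_{a,b}$-free it in particular has fewer than $\dD n^{a+b}$ copies of $I_{a,b}$ (it has zero), and $w_p(C)\ge w_p(B_a(n))$ or $w_p(C)\ge w_p(R_{b-1}(n))$, so by \eqref{eqk} and the near-extremality of the explicit constructions we have $w_p(C)>(\kK_p(I_{a,b})-\dD)\binom n2$ for $n$ large. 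Theorem \ref{superstability} then gives $C' \in \mc{G}_p(n)$ with $|C \triangle C'|<\eps n^2$; that is, $C$ agrees with an igraph consisting of $a$ blue cliques joined by green (or $b-1$ red cliques joined by green) on all but $\eps n^2$ pairs. The goal is to upgrade `all but $\eps n^2$ pairs' to `exactly'.

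The key steps are as follows. First I would do a cleaning/symmetrisation step: starting from the structure given by $C'$, assign each vertex to the part it most agrees with, obtaining a partition $V([n])=V_1\cup\dots\cup V_a$ (in the blue case; the red case is symmetric, and the $p=\tfrac{a}{a+b-1}$ case requires comparing the two optima). A standard counting argument shows that only $O(\eps n)$ vertices are `bad', i.e.\ incident to more than, say, $\sqrt\eps n$ pairs where $C$ disagrees with the template on $C'$; remove these bad vertices to obtain a large set $W$ on which $C$ restricted to $W$ is \emph{exactly} of the form `$a$ blue cliques joined by green'. The crucial point here is that on $W$ there are no `errors' at all: any monochromatic-red edge inside a part, or any red/blue edge across parts, would — combined with appropriately chosen further vertices of $W$ in the $a$ parts (using that each $V_i\cap W$ is still linear in size) — create a copy of $I_{a,b}$ in the two-coloured viewpoint, contradicting $I_{a,b}$-freeness. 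One must check the small-case arithmetic that $a+b$ vertices suffice and that the red $K_a$-part plus blue $K_b$-part of $I_{a,b}$ can indeed be embedded; this is where the hypotheses $a,b\ge 2$ enter.

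Next comes the \emph{local optimisation} step, which reinserts the bad vertices one at a time. For each bad vertex $v$, since $C$ is $I_{a,b}$-free, the colours on the pairs from $v$ into $W$ are heavily constrained: examining the link of $v$ on each clean part $V_i\cap W$, one argues that $v$ must send (almost) all green or all blue edges to $V_i\cap W$ apart from boundedly many exceptions, and that the `all blue to one part' pattern is forced (a single red edge $vx$ with $x\in V_i\cap W$ would need a red $K_{a-1}$ across the other parts avoiding $v$'s blue-into-them pairs, which is available). Hence up to a bounded number of pairs, $v$ behaves like a member of exactly one part $V_i$, and the $p$-weight gained by instead making $v$ behave \emph{exactly} like a member of the part maximising its weight is non-negative; by maximality of $w_p(C)$ it is zero and $v$ in fact conforms exactly — unless making this change strictly increases the weight, which would contradict optimality. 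Iterating over all bad vertices (being careful to re-run the argument relative to the growing clean set) yields that $C$ is \emph{exactly} a member of $\mc{B}_a(n)$ (resp.\ $\mc{R}_{b-1}(n)$).

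Finally, among igraphs in $\mc{B}_a(n)$ one maximises $w_p$: writing the part sizes as $n_1,\dots,n_a$, the weight is $|C_g|+(1-p)|C^-_b| = \binom n2 - p\sum_i\binom{n_i}2$ (the blue pairs carry weight $1-p$, green pairs weight $1$, and there are no red pairs), which is maximised precisely when $\sum_i\binom{n_i}2$ is minimised, i.e.\ when the $n_i$ are as equal as possible — giving $B_a(n)$ uniquely. Symmetrically the optimum in $\mc{R}_{b-1}(n)$ is $R_{b-1}(n)$, with weight $\binom n2 - (1-p)\sum_j\binom{m_j}2$ over part sizes $m_1,\dots,m_{b-1}$. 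Comparing the two optimal values $w_p(B_a(n))$ and $w_p(R_{b-1}(n))$ as functions of $p$ — essentially comparing $p/a$ against $(1-p)/(b-1)$ up to lower-order corrections coming from the rounding in `as equal as possible' — yields the trichotomy in the statement: $R_{b-1}(n)$ wins for $p>\tfrac a{a+b-1}$, $B_a(n)$ wins for $p<\tfrac a{a+b-1}$, and at $p=\tfrac a{a+b-1}$ the leading terms tie so the (integer) rounding terms decide, giving $B_a(n)$ if $a>b-1$, $R_{b-1}(n)$ if $a<b-1$, and both when $a=b-1$, which forces $p=1/2$. I expect the main obstacle to be the local optimisation step: making fully rigorous the claim that each bad vertex can be cleaned \emph{without ever decreasing the weight}, carefully handling the interaction between several bad vertices and the boundary cases of the extremal arithmetic, is the delicate heart of the argument, while the weight comparison at the end is routine.
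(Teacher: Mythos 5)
Your plan follows the same stability-to-exactness bootstrapping strategy as the paper, and the final weight comparison at the end is correct and matches Facts~\ref{turansize}--\ref{compareweights}. However, there are two substantive gaps in the middle steps.

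First, you omit the minimum $p$-degree reduction that the paper performs at the outset (via Fact~\ref{difference}: delete low $p$-degree vertices and show, by comparing against the explicit extremal weights, that the process must stop almost immediately, so one may assume $\dD_p(G)\ge\kK_p(I_{a,b})n-4$). This is not an optional cleaning step: the degree bound is used in every one of the paper's claims (ii)--(vii) to force the link of an arbitrary vertex into the prescribed pattern. In your ``local optimisation'' step, without a degree lower bound, a bad vertex $v$ could have very few green/red pairs altogether, in which case the claim that ``$v$ must send almost all green edges to $V_i\cap W$ for all but one $i$'' fails, and the whole forcing argument collapses. Theorem~\ref{superstability} gives you only a global weight lower bound, not a per-vertex one; you need the degree reduction to localise it.

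Second, the ``local optimisation'' step conflates a \emph{forcing} argument with an \emph{exchange-by-optimality} argument, and the latter is unsound as stated. You write that the weight gained by making $v$ conform exactly to its best part is non-negative, and that by maximality of $w_p(C)$ this must be zero. But the igraph obtained by overwriting $v$'s link may well contain $I_{a,b}$ (e.g.\ recolouring a stray blue pair $vu$ to green could complete a red clique), so maximality of $w_p(C)$ over \emph{$I_{a,b}$-free} igraphs gives you nothing. The paper instead proves unconditionally, using $I_{a,b}$-freeness plus the degree bound plus the specific partition chosen to \emph{minimise within-part red edges}, that there are no red edges inside parts at all (the chain of claims culminating in (vii)); the optimality of $w_p(C)$ is invoked only at the very end, when comparing the resulting weight to $w_p(B_a(n))$ with equality iff the parts are balanced. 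The auxiliary device of taking the partition minimising within-part red edges (subject to containing $P^*_i$) is what lets one conclude ``else moving $x$ to another part improves the count'' without ever touching $I_{a,b}$-freeness — you should adopt something like it. Also, your claim that removing vertices of high modified-degree gives a set $W$ on which $C$ is \emph{exactly} clean does not follow from the removal alone; on $W$ every vertex has few modified pairs, but you must still argue (as in the paper's claim (v), using Lemma~\ref{partiteturanb} to dodge the residual modified pairs) that any remaining within-part red edge would complete a copy of $I_{a,b}$.
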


This paper is organised as follows. The next section contains various preliminary steps
that reduce Theorem \ref{eye-free} to Theorem \ref{superstability} and thence to
a stability result (Theorem \ref{stability}). We prove this stability result 
in Section 3 then use it to also deduce Theorem \ref{exact} in Section 4.
The final section contains some concluding remarks and open problems.

\section{Preliminaries}

In this section we gather various tools and preliminary steps for our arguments.
The first subsection reduces Theorem \ref{eye-free} to Theorem \ref{superstability}
via an igraph container theorem. The second reduces Theorem \ref{superstability} to 
a stability result (Theorem \ref{stability}) via an igraph removal lemma.
The third and fourth subsections present some of the theory of Marchant and Thomason \cite{MT};
in some instances we need modifications of their results, so we give proofs of these.

\subsection{Igraph containers} \label{containers}

We start by stating a consequence of the recent hypergraph container theorems obtained 
independently by Balogh, Morris and Samotij \cite{BMS} and by Saxton and Thomason \cite{ST}.
%(note that $\tbinom{v}{2}-1 = \tfrac{1}{2}(v-2)(v+1)$).

\begin{thm} \thlabel{containers} (\cite[Theorem 2.6]{ST}, case $\ell=2$) 
For any graph $H$ and $\dD>0$ there is $c>0$ such that for $n$ sufficiently large
there is a collection $\mc{C}$ of igraphs on $[n]$ such that
\begin{enumerate}[label=(\alph*)]
\item for every $H$-ifree graph $I$ on $[n]$ there is $C \in \mc{C}$ with $I \sub C$,
\item any $C \in \mc{C}$ contains at most $\dD n^{|V(H)|}$ copies of $H$,
\item $\log |\mc{C}| \le cn^{2-2/(|V(H)|+1)} \log n$.
\end{enumerate}
\end{thm}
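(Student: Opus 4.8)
The honest answer is that this is a direct specialisation of the hypergraph container theorem: it is exactly \cite[Theorem 2.6]{ST} in the case $\ell=2$, so one simply invokes that result. Should one wish to derive it from the general container machinery, the plan is as follows. First I would set up an auxiliary hypergraph $\mc{H}=\mc{H}_{n,H}$ encoding induced-$H$-freeness. Let $v=|V(H)|$ and take as ground set $V=\binom{[n]}{2}\times\{0,1\}$, with $N:=|V|=n(n-1)$; think of $(ij,1)$ as the ``slot'' recording that $ij$ is an edge and $(ij,0)$ as the slot recording that $ij$ is a non-edge. A graph $G$ on $[n]$ corresponds to the transversal $I_G=\{(ij,1):ij\in E(G)\}\cup\{(ij,0):ij\notin E(G)\}$, which meets each pair in exactly one slot. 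For each injection $\phi\colon V(H)\to[n]$ put a hyperedge $e_\phi=\{(\phi(i)\phi(j),\mathbbm{1}[ij\in E(H)]):ij\in\binom{V(H)}{2}\}$; this has exactly $\binom{v}{2}$ vertices, so $\mc{H}$ is $\binom{v}{2}$-uniform with $|E(\mc{H})|=\Theta(n^v)$. By construction $I_G\supseteq e_\phi$ precisely when $\phi$ is an induced embedding of $H$ in $G$; hence $G$ is $H$-ifree iff $I_G$ is an independent set of $\mc{H}$, and for an arbitrary $C\sub V$ the number of copies of $H$ in the igraph determined by $C$ (red/blue/green/white according to which of $(ij,1),(ij,0)$ lie in $C$) equals, up to a factor $|\mathrm{Aut}(H)|$, the number of hyperedges of $\mc{H}$ inside $C$; moreover $I_G\sub C$ in the igraph sense is literally $I_G\sub C$ as subsets of $V$.

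Second I would apply the hypergraph container theorem to $\mc{H}$ with the parameter $\tau:=n^{-2/(v+1)}$. This requires bounding the co-degrees $\Delta_j(\mc{H})=\max_{|\sigma|=j}|\{e\in E(\mc{H}):\sigma\sub e\}|$ for $2\le j\le\binom{v}{2}$. Since a hyperedge contains one slot per pair, a $j$-set of slots lies in a common hyperedge only if the underlying pairs are distinct and form the edge set of a $j$-edge subgraph $F$ of $H$; if $F$ spans $m_j$ vertices then there are $O(n^{v-m_j})$ such hyperedges, so $\Delta_j(\mc{H})=O(n^{v-m_j})$ where $m_j$ is the least number of vertices carrying $j$ edges (the least $m$ with $\binom{m}{2}\ge j$), while $\Delta_1(\mc{H})=\Theta(n^{v-2})$. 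The relevant hypothesis ``$\Delta_j\lesssim\tau^{j-1}|E(\mc{H})|/N$'' for all $j\ge 2$, using $|E(\mc{H})|/N=\Theta(n^{v-2})$, reduces to $\tau\ge n^{-(m_j-2)/(j-1)}$; one checks that the exponent $(m_j-2)/(j-1)$ is minimised, over $2\le j\le\binom{v}{2}$, at $j=\binom{v}{2}$, where $(m_j-2)/(j-1)=2/(v+1)$ — exactly our choice of $\tau$. The container theorem then yields a family $\mc{C}'$ of subsets of $V$ such that every independent set of $\mc{H}$ lies in some member, each member spans at most $\eps|E(\mc{H})|$ hyperedges, and $\log|\mc{C}'|=O(N\tau\log(1/\tau))=O(n^{2-2/(v+1)}\log n)$. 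Reading each $C\in\mc{C}'$ as a whitened igraph, discard any $C$ containing a white pair: such a $C$ contains no transversal $I_G$, so it is never needed in (a). What remains is a collection $\mc{C}$ of genuine igraphs satisfying (a) (with $I=I_G$), (b) (absorb the $|\mathrm{Aut}(H)|$ factor and pick $\eps$ small against $\dD$), and (c).

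The main obstacle is the second step: verifying the whole battery of co-degree bounds $\Delta_j(\mc{H})$ for $2\le j\le\binom{v}{2}$ and confirming that the extremal value of $(m_j-2)/(j-1)$ — equivalently, the ``worst'' local configuration of slots — is the clique $j=\binom{v}{2}$, which is what pins down the exponent $2-2/(v+1)$. The remaining points are bookkeeping: the labelled-versus-unlabelled convention for counting copies of $H$ (a constant factor, harmless after rescaling $\dD$ into $c$), and the dictionary between the hypergraph conclusion (independent sets, subsets of $V$) and the igraph conclusion (graphs) — the observation that only white-pair-free containers are ever used makes this painless. Of course, all of this is subsumed by \cite[Theorem 2.6]{ST}, whose $\ell=2$ case is precisely the statement above, so in the paper one would simply cite it.
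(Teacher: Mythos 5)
The paper gives no proof of this theorem — it is quoted verbatim as the $\ell=2$ case of \cite[Theorem 2.6]{ST} — and your primary answer (simply invoke that result) is exactly what the paper does. Your supplementary sketch of the derivation via the $\binom{v}{2}$-uniform auxiliary hypergraph on $\binom{[n]}{2}\times\{0,1\}$, with the co-degree computation forcing $\tau=n^{-2/(v+1)}$ and the discarding of containers with white pairs, is also sound and matches the standard route to that result.
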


As shown in \cite[Theorem 1.6]{ST}, it is not hard to give an alternative
proof of the result of Bollob\'as and Thomason using Theorem \ref{containers}.
Indeed, we can bound the probability that $G \sim G_{n,p}$ is $H$-ifree by $\sum_{C \in \mc{C}} \mb{P}(G \sub C)$.
From (b) one can deduce $w_{p'}(C) < (\kK_{p'}(H)+o(1)) \tbinom{n}{2}$ for every $C \in \mc{C}$, 
so by (c) we have $- \log_2 \mb{P}(G \text{ is } H\text{-ifree})
\ge \log_2 |\mc{C}| + \min_{C \in \mc{C}} H_p(C)
\ge (\log_2 p + \log_2 (1-p)) (1-\kK_{p'}(H)+o(1)) \tbinom{n}{2}$.
Similarly, assuming Theorem \ref{superstability}, we deduce Theorem \ref{eye-free}
from Theorem \ref{containers} as follows.

\nim{Proof of Theorem \ref{eye-free}.}
Let $\dD$ be provided by Theorem \ref{superstability} applied with 
$p' = \tfrac{\log_2 (1-p)}{\log_2 p + \log_2 (1-p)}$ in place of $p$, and let 
$\mc{C}$ be the collection provided by Theorem \ref{containers} applied with $H=I_{a,b}$.
Let $\mc{C}_0$ be the set of $C \in \mc{C}$
with $\tbinom{n}{2}^{-1} w_{p'}(C) \le \kK_{p'}(I_{a,b}) - \dD$. 
For $G \sim G_{n,p}$ and $n$ large we have 
\begin{align*}
\log_2 \sum_{C \in \mc{C}_0} \mb{P}(G \sub C)
& \le \log_2 |\mc{C}| +  \max_{C \in \mc{C}_0} (-H_p(C))  \\
& \le (\log_2 p + \log_2 (1-p)) (1-\kK_{p'}(I_{a,b})+\dD/2) \tbinom{n}{2}  \\
& < \log_2 \brac{ \eps\ \mb{P}(G \text{ is } I_{a,b}\text{-ifree}) }.
\end{align*}
%$\log_2 \sum_{C \in \mc{C}_0} \mb{P}(G \sub C)
%\le \log_2 |\mc{C}| + \max_{C \in \mc{C}_0} (-H_p(C))
%\le (\log_2 p + \log_2 (1-p)) (1-\kK_{p'}(I_{a,b})+\dD/2) \tbinom{n}{2}
%< \log_2 \eps \mb{P}(G \text{ is } I_{a,b}\text{-ifree})$.
Thus for $G \sim G_{n,p}[I_{a,b}]$, with probability at least $1-\eps$
we have $G \sub C$ for some $C \in \mc{C} \sm \mc{C}_0$.
By Theorem \ref{superstability}, for any such $C$
there exists $F \sub E(C)$ with $|F| < \eps n^2$ such that $C \triangle F$ 
can be partitioned into $a$ blue cliques joined by green edges 
or into $b-1$ red cliques joined by green edges.
Then by modifying $G$ on the pairs of $F$ we obtain $G'$
with $|G \triangle G'| < \eps n^2$ such that $\chi(G') \le a$ or $\chi(\ov{G'}) \le b-1$.
By modifying a constant number of edges we can assume equality holds. \qed

\subsection{A removal lemma}

Next we show how to deduce Theorem \ref{superstability} from the following stability theorem.
We say that a whitened igraph $G$ on $[n]$ is a \emph{$c$-whitened igraph} 
if it has at most $cn^2$ white pairs.

\begin{thm}\thlabel{stability}
For every $\aA>0$, $a,b \ge 2$, and $p \in (0,1)$ there exists $n_0$ and $\bB>0$, 
such that for any $I_{a,b}$-free $\bB$-whitened igraph $G$ on $n > n_0$ vertices 
with $w_p(G) > (\kK_p(I_{a,b}) - \bB)\tbinom{n}{2}$
we can modify at most $\aA n^2$ pairs of $G$ to obtain $G' \in \mc{G}_p(n)$.
\end{thm}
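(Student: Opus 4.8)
\nim{Plan for the proof of Theorem \ref{stability}.}
By exchanging the roles of red and blue (and of $a$ and $b-1$) the case $p>\tfrac{a}{a+b-1}$ reduces to the case $p<\tfrac{a}{a+b-1}$, and at $p=\tfrac{a}{a+b-1}$ one runs the two arguments in parallel and keeps whichever succeeds; so I will describe the plan for $p<\tfrac{a}{a+b-1}$, where $\kK_p(I_{a,b})=1-\tfrac pa$ and the target is $\mc{G}_p(n)=\mc{B}_a(n)$. First I would unpack the weight hypothesis: writing $r^-=|G^-_r|$, $b^-=|G^-_b|$ and $w$ for the number of white pairs, one has $w_p(G)=\tbinom{n}{2}-w-(1-p)r^--pb^-$, so $(1-p)r^-+pb^-+w\le(\tfrac pa+\bB)\tbinom{n}{2}$. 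Since $p<\tfrac a{a+b-1}$ gives $\tfrac{p/a}{1-p}<\tfrac1{b-1}$, for $\bB$ small there is $\eps_0=\eps_0(a,b,p)>0$ with $|\ov{G_b}|=r^-+w<\brac{\tfrac1{b-1}-\eps_0}\tbinom{n}{2}$; also $b^-+w\le(\tfrac1a+\tfrac\bB p)\tbinom{n}{2}$, so $|G_r|=\tbinom{n}{2}-(b^-+w)\ge(1-\tfrac1a-O_p(\bB))\tbinom{n}{2}$.

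The next step uses the reformulation that $G$ contains $I_{a,b}$ exactly when some clique $A$ of size $a$ in $G_r$ has the property that $G_b$, restricted to the common $G_r$-neighbourhood of $A$ with white-adjacent vertices deleted, contains $K_b$. So for every such $A$, writing $N_A$ for that set, $G_b[N_A]$ is $K_b$-free, giving $|\ov{G_b}[N_A]|\ge\tfrac1{b-1}\tbinom{|N_A|}{2}-|N_A|$. I would combine this family of local inequalities with the global bound $|\ov{G_b}|<\brac{\tfrac1{b-1}-\eps_0}\tbinom{n}{2}$ to conclude that $G_r$ contains only $o(n^{a+1})$ copies of $K_{a+1}$: if it contained $\gG n^{a+1}$ of them, supersaturation would produce many $a$-cliques $A$ with $|N_A|=\Omega(n)$, on each of which $\ov{G_b}$ would have to be as dense as a Tur\'an graph $T_{b-1}$, and — after a bootstrapping/cleaning argument — this cannot be reconciled with the global density of $\ov{G_b}$ lying strictly below $\tfrac1{b-1}$.

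Once $G_r$ is known to have at least $(1-\tfrac1a-o(1))\tbinom{n}{2}$ edges and only $o(n^{a+1})$ copies of $K_{a+1}$, the stability version of the Erd\H{o}s--Stone--Simonovits theorem (via Tur\'an's theorem and the removal lemma) gives a complete $a$-partite graph $T$ on $[n]$, with parts $V_1,\dots,V_a$, such that $|G_r\triangle T|=o(n^2)$. It then remains to show the defect colours are negligible. The bound on $|G_r\triangle T|$ already controls the between-part pairs of $G$ that are not green and the within-part pairs that lie in $G_r$; one further application of $I_{a,b}$-freeness — a part with many vertices incident to many within-part green pairs would, together with a transversal of the other parts, exhibit $I_{a,b}$ since $G_b$ is by now nearly complete — bounds the within-part green pairs by $o(n^2)$, so $b^-=(\tfrac1a-o(1))\tbinom{n}{2}$ and, feeding this back into the weight inequality, $r^-+w=o(n^2)$. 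Recolouring every within-part pair blue and every between-part pair green then changes $o(n^2)<\aA n^2$ pairs and yields a member of $\mc{B}_a(n)=\mc{G}_p(n)$.

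The hard part is the middle step. A naive double count of the local inequalities only bounds the number of copies of $K_{a+1}$ in $G_r$ by a fixed positive fraction of $n^{a+1}$, not by $o(n^{a+1})$ — indeed the constraint `$G_b[N_A]$ is $K_b$-free' becomes nearly vacuous once $G_r$ is close to $a$-partite — so some iterative refinement (or, alternatively, a stability upgrade of the `type' structure theory of Marchant and Thomason) seems necessary to close the gap. Carrying the white pairs through the whole argument, which are allowed by the $\bB$-whitened hypothesis but break the cleanest formulations, and the borderline value $p=\tfrac a{a+b-1}$ are the remaining points that need care.
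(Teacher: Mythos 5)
You are candid that the middle step is unresolved, and I confirm that it is the real difficulty and that your proposed route does not close it. The family of local inequalities ``$\ov{G_b}[N_A]$ has density at least $\tfrac{1}{b-1}-o(1)$ on the common $G_r$-neighbourhood $N_A$ of an $a$-clique $A$'' combined with the global bound $|\ov{G_b}|<(\tfrac{1}{b-1}-\eps_0)\tbinom{n}{2}$ cannot, by itself, rule out $\gG n^{a+1}$ copies of $K_{a+1}$ in $G_r$: the sets $N_A$ with $|N_A|=\Omega(n)$ may all be (nearly) the same set $W$ of size $c n$ with $c$ small, in which case the local Tur\'an lower bounds only force $|\ov{G_b}|\gtrsim \tfrac{c^2}{b-1}\tbinom{n}{2}$, which is compatible with the global constraint. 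The ``bootstrapping/cleaning argument'' you invoke would have to show that the sets $N_A$ spread out enough to push the global $\ov{G_b}$-density up, and you do not supply that argument; in its absence the deduction ``$G_r$ is almost $K_{a+1}$-free'' is not available, and the subsequent Erd\H{o}s--Simonovits step has nothing to act on. A second, separately fatal, gap is the boundary value $p=\tfrac{a}{a+b-1}$: there both derived bounds degenerate to $|\ov{G_b}|\le (\tfrac{1}{b-1}+O(\bB))\tbinom{n}{2}$ and $|\ov{G_r}|\le (\tfrac1a+O(\bB))\tbinom{n}{2}$ with no $\eps_0$ slack in either, so ``run both arguments and keep whichever succeeds'' has no starting point; one needs a genuine dichotomy argument (decide whether $G$ leans towards $\mc{B}_a$ or $\mc{R}_{b-1}$) that is not present in your plan.

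For comparison, the paper does not argue globally via edge densities at all. It first reduces to large minimum $p$-degree (Lemma \ref{minDens}), then uses the Marchant--Thomason type-extension machinery (Lemmas \ref{extend} and \ref{extendlarge}, adapted to whitened igraphs) to grow a $p$-core type greedily inside $G$; Fact \ref{typemono}, Fact \ref{typenb} and Lemma \ref{uniquetype} then force that type to be $\tau(0,a)$ or $\tau(b-1,0)$, producing a constant-size blowup $t\times\tau(0,a)$ or $t\times\tau(b-1,0)$ sitting inside $G$ (Lemma \ref{stabpart1}). The stability statement is then obtained by a direct double-counting/degree argument against this fixed template (Lemmas \ref{stabpart2a} and \ref{stabpart2b}), which also handles the boundary $p=\tfrac{a}{a+b-1}$ cleanly because the case split is made at the moment the blowup is found, not via a global density slack. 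This is exactly the ``stability upgrade of the type structure theory'' you suspected might be necessary; the step you left open is where the paper invests its main technical effort.
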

 
We also require the following igraph removal lemma; we omit the proof, as it is very similar 
to that of the usual graph removal lemma of Erd\H{o}s, Frankl and R\"odl \cite{EFR}.

\begin{lemma}\thlabel{removal}
For any igraph $H$ and $\aA>0$ there is $\bB>0$ such that 
any igraph on $n$ vertices with at most $\bB n^{|V(H)|}$ copies of $H$ 
can be made $H$-free by whitening at most $\aA n^2$ pairs. 
\end{lemma}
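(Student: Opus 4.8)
The plan is to mimic the regularity-lemma proof of the Erd\H{o}s--Frankl--R\"odl removal lemma, viewing the host igraph $C$ on $[n]$ as the partition of $E(K_n)$ into the three colour classes $C^-_r,C^-_b,C_g$, and replacing edge deletion by whitening. Write $k=|V(H)|$, fix $\aA>0$, put $\dD=\aA/4$, choose $\eps>0$ small (how small to be dictated below by a counting lemma), and apply the multicolour version of Szemer\'edi's regularity lemma to the $3$-colouring: this yields an equipartition $V_1,\dots,V_M$ with $1/\eps\le M\le M_0(\eps)$ that is $\eps$-regular in each of $C^-_r,C^-_b,C_g$ simultaneously on all but at most $\eps M^2$ of the pairs $\{V_i,V_j\}$.

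Clean up $C$ by whitening: (i) all pairs inside a part; (ii) all pairs across an irregular $\{V_i,V_j\}$; (iii) for each regular $\{V_i,V_j\}$, all red and green pairs across it if $d_r(i,j)+d_g(i,j)<\dD$, and all blue and green pairs across it if $d_b(i,j)+d_g(i,j)<\dD$ (as $\dD<1/2$, at most one of these occurs for a given pair of parts). The whitened pairs number at most $M(n/M)^2+\eps M^2(n/M)^2+\dD n^2\le(2\eps+\dD)n^2\le\aA n^2$ once $\eps\le\aA/4$. Let $C'$ be the resulting $\aA$-whitened igraph; it remains to show $C'$ is $H$-free, and this is where the hypothesis of few copies of $H$ enters.

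Form the reduced igraph $R$ on $[M]$, colouring $\{i,j\}$ red (resp.\ blue) whenever $\{V_i,V_j\}$ is regular and $d_r(i,j)+d_g(i,j)\ge\dD$ (resp.\ $d_b(i,j)+d_g(i,j)\ge\dD$); a pair may receive both colours (read as green) or neither (read as white). On the one hand $R$ is $H$-free: if $R\sups H$ via an injection $\psi\colon[k]\to[M]$, then for every pair $uv$ of $V(H)$ the pair $\{V_{\psi(u)},V_{\psi(v)}\}$ carries, in the colour-union matching the colour of $uv$ in $H$ (red-or-green if $uv$ is red, blue-or-green if $uv$ is blue, either if $uv$ is green), a bipartite graph of density $\ge\dD$ that is $2\eps$-regular, since an edge-disjoint union of two $\eps$-regular pairs is $2\eps$-regular with additive density; the graph counting (embedding) lemma then produces at least $\bB'(k,\dD,\eps,M)\,n^k>0$ copies of $H$ in $C$, contradicting the hypothesis as soon as $\bB<\bB'$. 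On the other hand, any copy of $H$ in $C'$ uses $k$ vertices in distinct parts (within-part and irregular pairs being white), hence induces an injection $[k]\to[M]$; and if an edge of this copy that is, say, red in $H$ is realised by a red-or-green pair across some $\{V_i,V_j\}$, then this pair survived step (iii) only because $d_r(i,j)+d_g(i,j)\ge\dD$, so $\{i,j\}$ is red in $R$, and similarly for blue and for green edges. Thus a copy of $H$ in $C'$ would yield one in $R$, which does not exist, so $C'$ is $H$-free.

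It remains to fix constants: choose $\eps$ small enough that the counting lemma applies with $2\eps\ll\dD$ and $k$ vertices, obtain $M_0(\eps)$ and the regularity threshold $N_0(\eps)$, and set $\bB=\min\{\bB',\,N_0(\eps)^{-k}\}$; for $n<N_0(\eps)$ the hypothesis then forces zero copies of $H$ (so nothing is whitened), while for $n\ge N_0(\eps)$ the argument above applies. The one point needing care beyond the standard proof is the colour-blind nature of whitening in step (iii): one must check both that deleting the ``minority'' colour-union across a pair of parts is cheap and that a surviving copy of $H$ cannot route an edge through such a pair. Both are arranged by the choice $\dD<1/2$, which ensures that at most one colour-union is minority on each pair of parts; I expect this bookkeeping, rather than any genuinely new difficulty, to be the only obstacle.
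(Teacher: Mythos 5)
Your proof is correct and follows exactly the route the paper indicates (the paper omits the proof, saying only that it is ``very similar to that of the usual graph removal lemma of Erd\H{o}s, Frankl and R\"odl''): multicolour regularity, a cleaning step with whitening in place of deletion, a reduced igraph that must be $H$-free by the counting lemma applied to the overlapping unions $C_r = C^-_r \cup C_g$ and $C_b = C^-_b \cup C_g$, and the observation that a surviving copy would descend to the reduced igraph. The one loose end is your treatment of a green edge of $H$ as a wildcard matchable by either colour-union, whereas in the paper's multigraph viewpoint it would require a green pair of $C$; this is harmless since the lemma is only invoked with $H = I_{a,b}$, which has no green edges.
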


\nim{Proof of Theorem \ref{superstability}}.
Let $0<\dD<\aA<\eps$ and $n_0$ be chosen so that Lemma \ref{removal} can be applied with $H=I_{a,b}$ and $\bB=\dD$,
and Theorem \ref{stability} can be applied with $(\eps,3\aA)$ in place of $(\aA,\bB)$.
By Lemma \ref{removal}, by whitening at most $\aA n^2$ pairs of $G$ we obtain $G^*$ that is $I_{a,b}$-free.
Note that $w_p(G^*) > w_p(G) - \aA n^2 > (\kK_p(I_{a,b}) - 3\aA)\tbinom{n}{2}$.
Then by Theorem \ref{stability} we can modify at most $\eps n^2$ pairs
to obtain $G' \in B_a(n) \cup R_{b-1}(n)$. \qed

\subsection{Types} \label{types}

For the proof of Theorem \ref{stability} we require some of the theory
developed by Marchant and Thomason \cite{MT}, which we will now describe.
We start with some definitions. 
A \emph{type} $\tau$ is an igraph in which every vertex is coloured red or blue.
We let $m \times \tau$ denote the igraph $G$ that is the `$m$-fold blowup of $\tau$':
$V(G)$ is partitioned into parts $(V_u: u \in V(\tau))$ of size $m$
such that all edges inside a part $V_u$ have the colour of $u$ in $\tau$,
and all edges between two parts $V_u$ and $V_v$ have the colour of $uv$ in $\tau$
(in the three-coloured viewpoint).
We say that an igraph $G$ is \emph{described} by $\tau$
if it is a subgraph of $m \times \tau$ for some $m$.

One of the main results of Marchant and Thomason is that
for any hereditary property $\mc{P} = \{ \mc{H}-\text{free graphs} \}$ 
and $p \in (0,1)$ there is some type $\tau$ such that $\kK_p(\mc{H})$
is asymptotically achieved by igraphs described by $\tau$.
To calculate this value, we introduce the following notation.
Define $w_p(ij)$ to be $p$, $1-p$, $1$
according as $ij$ is red, blue, green,
and $w_p(ii)=w_p(i)$ to be $p$ or $1-p$
according as $i$ is red or blue.
We let $W_p(\tau)$ be the $V(\tau) \times V(\tau)$ 
symmetric matrix with $W_p(\tau)_{ij} = w_p(ij)$ for all $i,j$.
We also let $\DD(\tau)$ be the simplex of all $x = (x_u: u \in V(\tau))$
with $\sum_u x_u = 1$ and all $x_u \ge 0$. The \emph{$p$-value} of $\tau$ is
\[ \lL_p(\tau) = \max_{x \in \DD(\tau)} x^T W_p(\tau) x.\]
By considering igraphs $G$ on $n$ vertices described by $\tau$
with $||V_u| - x_u n| \le 1$ for $u \in V(\tau)$
we obtain $p$-weights $w_p(G) \sim \lL_p(\tau) \tbinom{n}{2}$.
Thus $\kK_p(\mc{H})$ is the maximum of $\lL_p(\tau)$
over all types $\tau$ that do not describe any $H \in \mc{H}$.

We say that a type $\tau$ is \emph{$p$-core}
if no proper subtype of $\tau$ has the same $p$-value.
Note that $\kK_p(\mc{H})$ is the maximum of $\lL_p(\tau)$
over all $p$-core types $\tau$ that do not describe any $H \in \mc{H}$.
This observation is useful because $p$-core types
have the following special structure.

\begin{thm}\thlabel{coreType} (\cite[Theorem 3.23]{MT})
Let $\tau$ be a $p$-core type. Then all edges of $\tau$ are green, apart from
	\begin{itemize}[nolistsep]
		\item if $p < 1/2$ when some edges between two red vertices can be blue, or
		\item if $p > 1/2$ when some edges between two blue vertices can be red.
	\end{itemize}
\end{thm}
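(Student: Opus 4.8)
The plan is to characterise $p$-core types through the first- and second-order optimality conditions for the quadratic program that defines $\lL_p(\tau)$, turn these into a semidefiniteness statement about $W_p(\tau)$, and then read off the colours of the edges by testing that statement against the simplest directions $e_i-e_j$, together with a barycentric condition coming from the optimiser.

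First I would establish the reduction: a type $\tau$ is $p$-core if and only if every maximiser $x$ of $x^{T}W_p(\tau)x$ over the simplex $\DD(\tau)$ lies in its relative interior. Indeed, if some maximiser has $x_u=0$ then its restriction witnesses $\lL_p(\tau-u)=\lL_p(\tau)$, so $\tau$ is not $p$-core; conversely, if $\lL_p(\tau')=\lL_p(\tau)$ for a proper subtype $\tau'$, then padding the optimiser of $\tau'$ with zeros gives $\tau$ a boundary maximiser. Fix such an interior maximiser $x$ and write $W=W_p(\tau)$, $\lL=\lL_p(\tau)$, $J=\mathbf 1\mathbf 1^{T}$, $H=\{y:\mathbf 1^{T}y=0\}$. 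The Lagrange condition gives $Wx=\lL\mathbf 1$, and the second-order condition gives $y^{T}Wy\le 0$ for $y\in H$; moreover this must be strict for $y\ne 0$, since a nonzero null vector $y\in H$ would let us slide along $x+ty$ to a boundary maximiser, contradicting the previous sentence. Decomposing any $z$ as $z=(\mathbf 1^{T}z)x+w$ with $w\in H$ and using $Wx=\lL\mathbf 1$ yields $z^{T}Wz=\lL(\mathbf 1^{T}z)^{2}+w^{T}Ww\le z^{T}(\lL J)z$, with equality iff $z\in\operatorname{span}(x)$. Hence $N:=\lL J-W$ is positive semidefinite of corank exactly one, with $\ker N=\operatorname{span}(x)$.

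Next I would pass to a Euclidean picture. Writing $N=G^{T}G$ with $G$ of rank $n-1$ and columns $g_1,\dots,g_n\in\mathbb R^{n-1}$, we get $g_i\cdot g_j=\lL-w_p(ij)$ and $g_i\cdot g_i=\lL-w_p(i)$; the $g_i$ are pairwise distinct (equal columns of $N$ would force $e_i-e_j\in\ker N$, impossible since $x>0$); and $\sum_i x_i g_i=0$ with all $x_i>0$, so the origin lies in the relative interior of $\operatorname{conv}\{g_i\}$. The key identity is
\[ |g_i-g_j|^{2}=2w_p(ij)-w_p(i)-w_p(j)>0 \qquad (i\ne j). \]
For an edge $ij$ that is red or blue we have $w_p(ij)\in\{p,1-p\}$ and $w_p(i),w_p(j)\in\{p,1-p\}$, so the inequality forces $2w_p(ij)>w_p(i)+w_p(j)$. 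When $p\le 1/2$ this already rules out every red edge (since $w_p(ij)=p$ but $w_p(i),w_p(j)\ge p$) and every blue edge with both endpoints blue (both sides equal $2(1-p)$); when $p\ge 1/2$ it dually rules out every blue edge and every red edge between two blue vertices; and when $p=1/2$ it rules out all non-green edges. By the colour-swap symmetry $p\leftrightarrow 1-p$ it then suffices, say for $p<1/2$, to exclude a blue edge $uv$ with $u$ blue and $v$ red.

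This last case is the one I expect to be the main obstacle. Here I would exploit the barycentric condition together with the structure already obtained: since no red edges remain, spelling out $Wx=\lL\mathbf 1$ at each vertex gives, with $\rho:=(1-\lL)/p>0$, the relations $x_b+\sum_{i:\,bi\text{ blue}}x_i=\rho$ for every blue vertex $b$ and $\tfrac{1-p}{p}x_r+\sum_{i:\,ri\text{ blue}}x_i=\rho$ for every red vertex $r$. Applying the separating-hyperplane form of the barycentric condition in the direction $g_v-g_u$ — which is orthogonal to $g_u$ and to the origin while $(g_v-g_u)\cdot g_v=1-2p>0$ — produces a vertex $w$ with $w_p(vw)>w_p(uw)$; as the remaining edge weights lie in $\{1-p,1\}$, this means $vw$ is green and $uw$ is another blue edge from $u$ to a red vertex. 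I would then argue that this propagation cannot be sustained, using the linear relations above and the bound $\lL_p(\tau[B])=1-p/|B|\le\lL$ for the set $B$ of blue vertices, to reach a contradiction. Nailing down this final step cleanly is the part I would need to think hardest about; it is plausible that a slicker global move — a weight-preserving modification of $\tau$ that deletes a vertex — sidesteps the Euclidean detour altogether, but the first- and second-order conditions above are in any case the natural engine for the whole argument.
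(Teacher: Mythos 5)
First, a point of comparison: the paper does not prove this statement --- Theorem \ref{coreType} is imported verbatim from Marchant and Thomason (\cite[Theorem~3.23]{MT}) --- so your attempt can only be measured against their argument, not an in-paper one. Your first- and second-order framework is correct and is essentially the engine they use too: the equivalence of $p$-coreness with every maximiser being interior, the stationarity condition $W_p(\tau)x=\lL_p(\tau)\mathbf{1}$ (which is Fact \ref{vary}), strict negativity of $y^TW_p(\tau)y$ on the zero-sum hyperplane, and hence the inequality $2w_p(uv)>w_p(u)+w_p(v)$ for every edge of a core type. This correctly eliminates, for $p<1/2$, all red edges and all blue edges joining two blue vertices (dually for $p>1/2$; note a slip in your dual statement --- the inequality excludes red edges between two \emph{red} vertices, not two blue ones, which is precisely why red edges between blue vertices may survive, as the theorem asserts).

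The genuine gap is the mixed case, which is exactly where the content of the theorem lies beyond this edge inequality: for $p<1/2$ a blue edge $uv$ with $u$ blue and $v$ red satisfies $2w_p(uv)=2(1-p)>1=w_p(u)+w_p(v)$, so it passes the test. Your separating-hyperplane step is fine as far as it goes (it yields a red vertex $w$ with $uw$ blue and $vw$ green), but the claimed contradiction from ``propagation'' is neither established nor routine: iterating the argument from the edge $uw$ can simply return the vertex $v$, so no growing structure is forced, and the natural second-order test on the triple $\{u,v,w\}$, namely $y=-2e_u+e_v+e_w$, gives $y^TW_p(\tau)y=6p-2$, which contradicts negative definiteness only for $p\ge 1/3$; for $p<1/3$ this configuration passes every two- and three-vertex test, so a genuinely new idea is required (in \cite{MT} this step occupies several further lemmas). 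As you acknowledge, you have not supplied it; without it you have only proved the weaker statement that, for $p<1/2$, a core type has no red edges and no blue edges between two blue vertices, which still permits exactly the mixed blue edges the theorem forbids.
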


One can easily deduce (\ref{eqk}). Indeed, note that $I_{a,b}$ is described by the type 
with one blue vertex and one red vertex joined by a green edge, 
so Theorem \ref{coreType} has the following consequence.

\begin{fact}\thlabel{typemono}
Let $\tau$ be a $p$-core type that does not describe $I_{a,b}$.
Then all vertices of $\tau$ have the same colour.
\end{fact}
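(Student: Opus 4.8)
The plan is to prove the contrapositive: if $\tau$ has both a red vertex $r$ and a blue vertex $b$, then $\tau$ describes $I_{a,b}$. The only input needed is Theorem~\ref{coreType}: since $\tau$ is $p$-core, every edge of $\tau$ is green except possibly some edges joining two vertices of the \emph{same} colour. The edge $rb$ joins a red vertex to a blue vertex, so it falls outside both exceptional cases and must therefore be green, regardless of whether $p$ is less than, equal to, or greater than $1/2$.

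Hence the subtype of $\tau$ induced on $\{r,b\}$ is precisely the two-vertex type with one red vertex, one blue vertex, and a green edge between them. As already observed in the paragraph preceding the statement, this type describes $I_{a,b}$: concretely, in its $m$-fold blowup with $m \ge \max\{a,b\}$ one selects $a$ vertices from the red part and $b$ from the blue part, obtaining a two-coloured $K_{a+b}$ whose only blue edges lie inside a fixed $b$-set, which is exactly $c(I_{a,b})$.

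Finally I invoke the obvious monotonicity of ``describes'': if $\tau'$ is a subtype of $\tau$ then $m \times \tau' \sub m \times \tau$ for every $m$, so any igraph described by $\tau'$ is described by $\tau$. Thus $\tau$ describes $I_{a,b}$, contradicting the hypothesis, and we conclude that all vertices of $\tau$ carry the same colour. There is no real obstacle in this argument; its entire substance is carried by Theorem~\ref{coreType}, and the only point requiring a little care is keeping the two-coloured and three-coloured viewpoints straight when confirming that the two-vertex green-edge type describes $I_{a,b}$.
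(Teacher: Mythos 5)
Your proof is correct and follows essentially the same route the paper sketches: note that the two-vertex type with one red vertex, one blue vertex, and a green edge describes $I_{a,b}$; then observe via Theorem~\ref{coreType} that in a $p$-core type any red--blue edge must be green, so a $p$-core type with both colours would contain that two-vertex type as a subtype and hence describe $I_{a,b}$. The extra details you supply (the explicit embedding of $c(I_{a,b})$ into the blowup, and the monotonicity of ``describes'' under passing to subtypes) are exactly what the paper leaves implicit.
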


Let $\tau(x,y)$ denote the type with $x$ red vertices and $y$ blue vertices 
in which all edges are green. Note that $\tau(0,y)$ describes $I_{a,b}$ for $y>a$
and $\tau(x,0)$ describes $I_{a,b}$ for $x \ge b$. Thus $\kK_p(I_{a,b})$ 
is the maximum of $\lL_p(\tau(0,a)) = 1-\tfrac{p}{a}$ and $\lL_p(\tau(b-1,0)) = 1-\tfrac{1-p}{b-1}$.
Note that these values are equal when $p = \tfrac{a}{a+b-1}$,
which indicates why this is the threshold parameter in Theorem \ref{exact}.

Later we will also need to know that these are the only extremal types (see Lemma \ref{uniquetype}).
First we introduce some more notation. Suppose $G$ is a whitened igraph, $x \in V(G)$ and $S \sub V(G)$.
We write $d_r(x,S)$ for the number of vertices $y \in S$ such that $xy$ is red, in the two-coloured viewpoint.
If $S = V(G)$ we omit it from the notation. 
We define $d_b(x,S)$, $d_g(x,S)$ and $d_w(x,S)$ similarly for blue, green and white.
For $p \in (0,1)$, the \emph{$p$-degree} of $x$ in $S$ is $d_p(x,S) = pd_r(x) + (1-p)d_b(x)$.
Note that $d_r(x)+d_b(x)+d_w(x) = d_g(x) + |V(G)|-1$, $\sum_{x \in V(G)} d_r(x) = 2|G_r|$, 
$\sum_{x \in V(G)} d_b(x) = 2|G_b|$, $\sum_{x \in V(G)} d_g(x) = 2|G_g|$
and $\sum_{x \in V(G)} d_p(x) = 2w_p(G)$.
The \emph{minimum $p$-degree} of $G$ is $\dD_p(G) = \min_{x \in V(G)} d_p(x)$.
The following standard lemma shows that we can ensure large minimum $p$-degree by deleting a few vertices.

\begin{lemma}\thlabel{minDens}
For any graph $H$ and $\dD>0$ there exists $n_0$ such that
for any $H$-free whitened igraph $G$ on $n>n_0$ vertices with $w_p(G) > (\kK_p(H)-\dD) \tbinom{n}{2}$,
there is $S \sub V(G)$ with $|S| \le \sqrt{\dD} n$ such that $\dD_p(G \sm S) > (\kK_p(H)-\sqrt{\dD}) |V(G \sm S)|$.
\end{lemma}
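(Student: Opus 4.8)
The plan is the standard greedy argument: iteratively delete a vertex of minimum $p$-degree until the required minimum-$p$-degree bound holds, and show the process halts after at most $\sqrt{\dD}n$ deletions. (We may assume $\dD<1$, since $\kK_p(H)<1$, so for $\dD\ge1$ the choice $S=\es$ already works.) Write $\kK=\kK_p(H)$ and suppose, for contradiction, that no set $S$ as in the statement exists; equivalently, $\dD_p(G\sm S)\le(\kK-\sqrt{\dD})\,|V(G\sm S)|$ for every $S\sub V(G)$ with $|S|\le\sqrt{\dD}n$.

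Put $t=\bfl{\sqrt{\dD}n}+1$, and choose vertices $v_1,\dots,v_t$ greedily: with $S_i=\{v_1,\dots,v_i\}$ and $G_i=G\sm S_i$ (so $G_0=G$), let $v_i$ be a vertex of minimum $p$-degree in $G_{i-1}$. Since $|S_{i-1}|=i-1\le\bfl{\sqrt{\dD}n}\le\sqrt{\dD}n$, our assumption gives $d_p(v_i,V(G_{i-1}))=\dD_p(G_{i-1})\le(\kK-\sqrt{\dD})(n-i+1)$. Deleting $v_i$ from $G_{i-1}$ lowers the $p$-weight by exactly $d_p(v_i,V(G_{i-1}))$, so summing over $i$ and using $\sum_{i=1}^{t}(n-i+1)=tn-\tbinom{t}{2}$ and the hypothesis $w_p(G)>(\kK-\dD)\tbinom{n}{2}$,
\[ w_p(G_t)\ \ge\ w_p(G)-(\kK-\sqrt{\dD})\Big(tn-\tbinom{t}{2}\Big)\ >\ (\kK-\dD)\tbinom{n}{2}-(\kK-\sqrt{\dD})\Big(tn-\tbinom{t}{2}\Big). \]
On the other hand $G_t$ is an $H$-free whitened igraph on $n-t$ vertices, so $w_p(G_t)\le\kK\tbinom{n-t}{2}$; here we use the standard bound $\text{kex}_p(H,m)\le\kK_p(H)\tbinom{m}{2}$, valid even for whitened igraphs, which follows from the sub-igraph averaging argument (the same one showing that the limit defining $\kK_p$ exists, so that $\tbinom{m}{2}^{-1}\text{kex}_p(H,m)$ is non-increasing) together with the fact that allowing white pairs does not increase the extremal $p$-density.

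Combining the two bounds and simplifying via $\tbinom{n}{2}-\tbinom{n-t}{2}=tn-\tbinom{t+1}{2}$ and $\tbinom{t+1}{2}-\tbinom{t}{2}=t$, the $\kK tn$ terms cancel and one is left with
\[ \kK\,t+\dD\tbinom{n}{2}\ >\ \sqrt{\dD}\Big(tn-\tbinom{t}{2}\Big). \]
Since $\sqrt{\dD}n<t\le\sqrt{\dD}n+1$, the right-hand side is at least $\dD n^{2}-\tfrac12\dD^{3/2}n^{2}-O(\dD n)=\tfrac12\dD n^{2}+\tfrac12\dD(1-\sqrt{\dD})n^2-O(\dD n)$, while the left-hand side is at most $\tfrac12\dD n^{2}+O(\sqrt{\dD}n)$; as $\dD<1$ is fixed, the gap $\tfrac12\dD(1-\sqrt{\dD})n^2=\Theta(n^2)$ swamps the lower-order terms, so this fails once $n>n_0(\dD)$. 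This contradiction shows the greedy process halts after removing at most $\bfl{\sqrt{\dD}n}\le\sqrt{\dD}n$ vertices, and the remaining vertex set is the desired $S$.

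The arithmetic here is entirely routine; the only genuine input is the upper bound $w_p(C)\le\kK_p(H)\tbinom{|V(C)|}{2}$ for $H$-free whitened igraphs $C$ — that is, that whitening does not help in the extremal igraph problem. Everything else is careful bookkeeping of the second-order terms to confirm that, for $t$ just above $\sqrt{\dD}n$, the lower and upper bounds on $w_p(G_t)$ are genuinely incompatible when $n$ is large.
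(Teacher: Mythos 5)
Your overall approach — greedily delete a minimum-$p$-degree vertex roughly $\sqrt{\dD}n$ times and derive a contradiction from an upper bound on the $p$-weight of the surviving whitened igraph — is exactly the paper's, and your bookkeeping of the second-order terms is essentially sound. However, there is a genuine error in the ``only genuine input'' you single out: the asserted bound $\text{kex}_p(H,m)\le\kK_p(H)\tbinom{m}{2}$ is false, and in fact your own justification proves the \emph{opposite} inequality. Sub-igraph averaging shows that $\tbinom{m}{2}^{-1}\text{kex}_p(H,m)$ is non-\emph{increasing} and tends to $\kK_p(H)$, hence $\text{kex}_p(H,m)\ge\kK_p(H)\tbinom{m}{2}$ for every $m$; for a concrete witness, if $m<|V(H)|$ then the all-green igraph on $m$ vertices is $H$-free with $p$-weight $\tbinom{m}{2}>\kK_p(H)\tbinom{m}{2}$. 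So the line ``$w_p(G_t)\le\kK\tbinom{n-t}{2}$'' is not justified, and the displayed inequality you then contradict is not one you are entitled to assume.

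The fix is exactly what the paper does: one only needs the \emph{asymptotic} upper bound $w_p(C)\le(\kK_p(H)+\eta)\tbinom{m}{2}$ for $H$-free whitened igraphs on $m>m_0(\eta)$ vertices, where $\eta>0$ may be taken as small as desired (the paper uses $\eta=\dD/2$). Since $|V(G_t)|=n-t\ge(1-\sqrt{\dD})n-1\to\infty$, this applies once $n$ is large. Replacing $\kK$ by $\kK+\eta$ on the right of your upper bound, the same simplification gives
$(\dD+\eta)\tbinom{n}{2}+(\kK+\eta)t>(\sqrt{\dD}+\eta)\bigl(tn-\tbinom{t}{2}\bigr)$,
and one checks (as you did for $\eta=0$) that this fails for large $n$ provided $\eta<\dD(1-\sqrt{\dD})$, say. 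One further point deserving a word: both your argument and the paper's implicitly use that the asymptotic extremal density is unchanged when white pairs are permitted. You flag this (``whitening does not increase the extremal $p$-density'') but give no justification, and it is not immediate from averaging alone; it is however a known fact from the Marchant--Thomason framework for incomplete two-coloured multigraphs, so citing it suffices — just do not present it as a consequence of the monotonicity argument, which by itself gives a bound in the wrong direction.
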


\begin{proof}
Suppose that we have a sequence $G=G_n, G_{n-1}, \dots, G_{n-\sqrt{\dD} n} = G'$ where for each $i$
we obtain $G_{i-1}$ from $G_i$ by deleting a vertex of $p$-degree at most $(\kK_p(H)-\sqrt{\dD}) i$. Then
\begin{align*}
w_p(G') & \ge w_p(G) - \sum_{i = n-\sqrt{\dD}n+1}^n (\kK_p(H)-\sqrt{\dD}) i \\
& \ge (\kK_p(H)-\dD) \tbinom{n}{2} - (\kK_p(H)-\sqrt{\dD}) \brac{ \tbinom{n+1}{2} - \tbinom{n-\sqrt{\dD}n+1}{2} } \\
& \ge (\sqrt{\dD}-\dD) \tbinom{n}{2} - n + (\kK_p(H)-\sqrt{\dD}) \tbinom{n-\sqrt{\dD}n}{2}.
\end{align*}
For large $n$ we have
\[(\dD/2+\sqrt{\dD}) \tbinom{n-\sqrt{\dD}n}{2} < (\dD/2+\sqrt{\dD}) (1-\sqrt{\dD})^2 n^2/2 
= (\sqrt{\dD} - 3\dD/2 + \dD^2/2) n^2/2 < (\sqrt{\dD}-\dD) \tbinom{n}{2} - n\]
so $w_p(G') > (\kK_p(H)+\dD/2) \tbinom{n-\sqrt{\dD}n}{2}$.
This contradicts the definition of $\kK_p(H)$ for large $n$,
so the sequence must terminate before we reach $G'$, which gives $S$ as required.
\end{proof}

Next we note another easy consequence of Theorem \ref{coreType}.

\begin{fact}\thlabel{typenb}
Let $\tau$ be a $p$-core type that does not describe $I_{a,b}$.
\begin{itemize}[nolistsep]
	\item If $\tau$ has blue vertices then $|V(\tau)| \le a$.
	\item If $\tau$ has red vertices then $d_g(x)<b-1$ for all $x \in V(\tau)$. 
\end{itemize}
\end{fact}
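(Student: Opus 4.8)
The plan is to combine Fact \ref{typemono} and Theorem \ref{coreType} to reduce to two cases according to the common colour of the vertices of $\tau$, and in each case to derive a contradiction from a failure of the claimed bound by exhibiting an induced $I_{a,b}$ inside a large blowup $m\times\tau$. Recall that $m\times\tau$, read as an igraph, has all pairs inside a part $V_u$ coloured with the colour of $u$ and all pairs between $V_u$ and $V_v$ coloured with the colour of $uv$; and that, in the two-coloured viewpoint, $m\times\tau$ contains $I_{a,b}$ precisely when it has $a+b$ vertices split as $A\cup B$ with $|A|=a$, $|B|=b$, every pair inside $B$ lying on a non-red edge and every other pair on a non-blue edge. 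By Fact \ref{typemono} all vertices of $\tau$ share a colour; by Theorem \ref{coreType}, if they are all blue then no edge of $\tau$ is blue (each edge is green, or red when $p>1/2$), and dually if they are all red then no edge of $\tau$ is red.

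For the first bullet I would suppose $\tau$ has a blue vertex and $|V(\tau)|\ge a+1$, pick distinct $u_0,\dots,u_a\in V(\tau)$, and in $m\times\tau$ (with $m\ge b$) place all of $B$ in $V_{u_0}$ and one vertex of $A$ in each of $V_{u_1},\dots,V_{u_a}$. The pairs inside $B$ are blue (allowed), and every remaining pair runs between two distinct parts, hence is green or red but never blue (allowed); so $\tau$ describes $I_{a,b}$, contradicting the hypothesis, and therefore $|V(\tau)|\le a$.

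For the second bullet I would suppose $\tau$ has a red vertex and some $x$ with $d_g(x)\ge b-1$, fix distinct green-neighbours $u_2,\dots,u_b$ of $x$, and in $m\times\tau$ (with $m\ge a+1$) place all of $A$ together with one vertex $b_1$ of $B$ inside $V_x$ and each remaining $b_i$ in $V_{u_i}$. Pairs inside $A$ and pairs of the form $a_jb_1$ lie inside $V_x$, hence are red, which is fine as these are edges of $I_{a,b}$; pairs $a_jb_i$ with $i\ge2$ run from $V_x$ to a green-neighbour, hence are green; pairs $b_1b_i$ are green for the same reason; and pairs $b_ib_j$ with $i,j\ge2$ run between distinct parts, hence are green or blue but never red (allowed). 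So again $\tau$ describes $I_{a,b}$, a contradiction, and $d_g(x)<b-1$ for all $x$.

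I do not expect a serious obstacle here: the argument is essentially bookkeeping across the two- versus three-coloured viewpoints. The one point that needs care is the red case, where absorbing a single vertex $b_1$ of $B$ into the part $V_x$ alongside the red clique $A$ is exactly what makes $b-1$ green-neighbours of $x$ suffice, so that the threshold comes out as $d_g(x)<b-1$ rather than $d_g(x)<b$.
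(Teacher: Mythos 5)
Your argument is correct; the paper states Fact \ref{typenb} as an ``easy consequence'' of Theorem \ref{coreType} without spelling out a proof, and your reduction via Fact \ref{typemono} and Theorem \ref{coreType} to the monochromatic-vertex case, followed by the explicit placements of $A$ and $B$ inside a blowup $m\times\tau$, is exactly the kind of direct verification intended. In particular, your observation about absorbing one vertex of $B$ into $V_x$ alongside $A$ in the red case is the right move to obtain the threshold $b-1$ rather than $b$, and your use of Theorem \ref{coreType} to rule out blue edges (in the all-blue case) and red edges (in the all-red case) is what makes the blowup free of forbidden colours on the relevant pairs.
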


A variational argument gives the following property 
of any vector achieving the $p$-value of a type.
For a convenient statement we define $w_p(uu) = w_p(u)$.

\begin{fact}\thlabel{vary} (\cite[Fact 3.19]{MT})
Suppose $x \in \DD(\tau)$ with $\lL_p(\tau) = x^T W_p(\tau) x$.
Then for any $u \in V(\tau)$ we have
\[\lL_p(\tau)  = \sum_{v \in V(\tau)} w_p(uv) x_v.\]
\end{fact}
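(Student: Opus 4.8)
\textbf{Proof proposal for Fact \ref{vary}.}

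The plan is to use a standard Lagrange-multiplier / first-order optimality argument on the simplex, exploiting that $W_p(\tau)$ is symmetric. Suppose $x \in \DD(\tau)$ maximises the quadratic form $q(y) = y^T W_p(\tau) y$ over the simplex, and set $\lL = \lL_p(\tau) = q(x)$. First I would observe that for any $u,v \in V(\tau)$ with $x_v > 0$, the point $x' = x + t(e_u - e_v)$ stays in $\DD(\tau)$ for all sufficiently small $t \ge 0$ (and also for small $t<0$ if additionally $x_u>0$), since $\sum_w x'_w = 1$ and all coordinates remain nonnegative. Writing $g(v) = \sum_{w} w_p(vw) x_w$ for the `weighted degree' of $v$ under $x$ (using the convention $w_p(vv) = w_p(v)$, so that $g(v)$ is exactly the $v$-th coordinate of $W_p(\tau)x$), a direct expansion using symmetry of $W_p(\tau)$ gives
\[ q(x') = \lL + 2t\,(g(u) - g(v)) + t^2\,(w_p(u) - 2w_p(uv) + w_p(v)). \]
The first-order term is what matters.

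Next I would extract the inequalities from optimality. If $x_v > 0$ then taking $t>0$ small and using $q(x') \le \lL$ forces $g(u) \le g(v)$ for every $u$; if moreover $x_u > 0$ we may also take $t<0$, giving $g(u) = g(v)$. Hence all $v$ with $x_v>0$ share a common value of $g$, call it $\lL'$, and every $u \in V(\tau)$ satisfies $g(u) \le \lL'$. Finally I would identify $\lL' = \lL$: since $\sum_v x_v g(v) = x^T W_p(\tau) x = \lL$ and $g(v) = \lL'$ on the support of $x$ while the support has total mass $1$, we get $\lL' = \lL$. Therefore $g(u) = \sum_{v \in V(\tau)} w_p(uv) x_v = \lL_p(\tau)$ for all $u \in V(\tau)$, which is exactly the claimed identity.

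The only mildly delicate point — and the one I would be most careful about — is the case $x_u = 0$: the displayed equality is asserted for \emph{all} $u \in V(\tau)$, not just those in the support, so I cannot simply use a two-sided perturbation there. The resolution is that $g(u) \le \lL$ already holds for such $u$ from the one-sided argument above, so I need the reverse inequality $g(u) \ge \lL$. This comes from the fact (Theorem \ref{coreType}, or rather the $p$-core hypothesis that underlies the contexts where this Fact is applied) that we may take $x$ to have full support: if $x_u = 0$ for some $u$, then $x$ lies in $\DD(\tau')$ for the proper subtype $\tau' = \tau - u$, and $\lL_p(\tau') \ge x^T W_p(\tau')x = \lL_p(\tau)$, so $\tau$ would not be $p$-core. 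Thus for a $p$-core type every maximiser has $x_u > 0$ for all $u$, the two-sided perturbation is always available, and the equality holds for every vertex. (If one does not wish to invoke $p$-coreness, the same conclusion follows by a limiting argument: perturb to a full-support near-maximiser and pass to the limit, using continuity of $g$.)
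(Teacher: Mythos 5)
The paper does not prove Fact \ref{vary}: it is imported verbatim from Marchant and Thomason, so there is no in-paper argument to compare with. Your proof is the standard first-order (Lagrange/KKT) optimality argument on the simplex, and it is correct. The expansion $q(x') = \lL + 2t(g(u)-g(v)) + t^2(w_p(u)-2w_p(uv)+w_p(v))$ is right, the one-sided and two-sided perturbations give $g(u) \le g(v)$ and $g(u)=g(v)$ respectively as you claim, and summing $x_v g(v)$ against the support correctly identifies the common value as $\lL$. You also correctly spotted the real subtlety: the displayed equality for \emph{all} $u$, including those outside the support, genuinely requires the maximiser to have full support, and the statement as written is false without that. (A concrete counterexample: take three blue vertices joined by green edges plus a fourth red vertex joined to them by blue edges, $p=1/2$; the maximiser $(1/3,1/3,1/3,0)$ gives $\lL = 5/6$ but $g$ at the fourth vertex is $1/2$.) Your resolution — that when $\tau$ is $p$-core, every maximiser must have full support, else deleting a zero-weight vertex would give a proper subtype of equal $p$-value — is exactly right, and is the only setting in which the paper ever invokes this Fact. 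The one thing I would strike is the parenthetical ``limiting argument'' at the end: continuity of $g$ does \emph{not} rescue the claim for non-$p$-core types, since a zero-weight vertex can have $g$ strictly below $\lL$ at every maximiser (as the counterexample shows), and near-maximisers of full support need not exist in a useful sense. The $p$-core hypothesis (or an explicit full-support hypothesis on $x$) is essential, not a convenience.
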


Now we deduce the following lemma; the proof is similar to that of \cite[Lemma 5.4]{MT}.

\begin{lemma}\thlabel{uniquetype}
For $a,b \ge 2$, any $p$-core type of maximum $p$-value 
not describing $I_{a,b}$ is $\tau(0,a)$ or $\tau(b-1,0)$. 
\end{lemma}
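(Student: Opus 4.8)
The plan is to use the structural information from Theorem~\ref{coreType} together with Fact~\ref{typemono} to reduce to the two families $\tau(x,0)$ and $\tau(0,y)$, and then pin down the exact value of $x$ (resp.\ $y$) using the constraint that $\tau$ must not describe $I_{a,b}$, combined with Fact~\ref{vary}. First I would let $\tau$ be a $p$-core type of maximum $p$-value not describing $I_{a,b}$, and invoke Fact~\ref{typemono}: all vertices of $\tau$ have the same colour. So either all vertices are blue, or all vertices are red.

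Consider first the case where all vertices of $\tau$ are blue. By Theorem~\ref{coreType}, since all vertices are blue, no edge between two blue vertices is forced to be green---but wait, the exceptional edges in Theorem~\ref{coreType} are ``red between two blue vertices'' only when $p>1/2$, so for $p\le 1/2$ all edges are green and $\tau=\tau(0,y)$ for some $y$. For $p>1/2$ some edges between blue vertices may be red; but a red edge between two blue vertices means the type describes a graph containing an edge whose endpoints are in blue cliques, and I claim this only decreases the $p$-value relative to making that edge green (since $w_p(\text{green})=1>p=w_p(\text{red})$ for $p<1$), and greening it cannot create an induced $I_{a,b}$ since greening only enlarges $\mc{G}(\cdot)$ on that pair in a controlled way---actually, the cleaner route is: greening an edge can only make the type describe \emph{more} graphs, so if the greened type still does not describe $I_{a,b}$ we are done by maximality; and it does not, because $\tau(0,y)$ describes $I_{a,b}$ iff $y>a$, a condition unaffected by whether a particular blue--blue edge was red or green in $\tau$. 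Hence we may assume $\tau=\tau(0,y)$. By Fact~\ref{typenb} (first bullet), $y=|V(\tau)|\le a$; and $\lL_p(\tau(0,y))=1-\tfrac{p}{y}$ is increasing in $y$, so maximality forces $y=a$, giving $\tau=\tau(0,a)$. Symmetrically, if all vertices are red then $\tau=\tau(x,0)$ with $\lL_p(\tau(x,0))=1-\tfrac{1-p}{x}$; here Fact~\ref{typenb} (second bullet) gives $d_g(v)<b-1$ for all $v$, i.e.\ $x-1<b-1$, so $x\le b-1$, and again by monotonicity maximality forces $x=b-1$.

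The main obstacle I anticipate is handling the non-green exceptional edges in Theorem~\ref{coreType} cleanly, i.e.\ ruling out that a maximum $p$-value $p$-core type has genuine red edges among blue vertices (or blue among red). The ``greening'' argument above needs care: one must check that (i) replacing an exceptional red edge by green strictly increases $x^TW_p x$ for the \emph{optimal} $x$, using Fact~\ref{vary} to argue the optimal weight stays positive on the relevant vertices, or else invokes $p$-coreness to delete a zero-weight vertex; and (ii) that the resulting all-green type still avoids describing $I_{a,b}$, which follows because $\tau(0,y)$'s describing behaviour depends only on $y$ and a $b$-clique, not on the colours of blue--blue edges. An alternative that sidesteps this is to observe directly that if $\tau$ had an exceptional (non-green) edge then deleting one of its endpoints yields a subtype; using Fact~\ref{vary} one shows the $p$-value is unchanged, contradicting $p$-coreness unless that endpoint has weight zero, which after pruning leaves an all-green type. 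Once all edges are green, the rest is the short monotonicity computation above. I would follow \cite[Lemma 5.4]{MT} for the precise bookkeeping.
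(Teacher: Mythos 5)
Your blue case is essentially sound, but your red case has a genuine gap, and it is exactly the part where the proof does real work. You write ``if all vertices are red then $\tau=\tau(x,0)$'' and then use $d_g(v)<b-1$ to conclude $x\le b-1$ — but that identification of $\tau$ with the all-green type $\tau(x,0)$ is unjustified, and the bound on $d_g(v)$ only controls the green degree, not $|V(\tau)|$. Fact~\ref{typenb} is genuinely asymmetric: for blue types it bounds the number of vertices, but for red types it bounds only the green degree, so a $p$-core red type can have arbitrarily many vertices (with some blue edges, as Theorem~\ref{coreType} permits when $p<1/2$) and still satisfy $d_g(v)\le b-2$ everywhere while not describing $I_{a,b}$.

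Your proposed repair — greening the exceptional edges — fails precisely in this case. Greening makes the type describe strictly more graphs, so the greened type may suddenly describe $I_{a,b}$ even though the original did not. Concretely, take $b$ red vertices all joined by blue edges: this does not describe $I_{a,b}$, but greening everything gives $\tau(b,0)$, which does. So the greened type is simply not available as a competitor, and the ``it can only increase the $p$-value'' argument gives no contradiction. What the paper's proof actually does for red types with $k\ge b$ vertices is a direct optimisation: using Fact~\ref{vary} (with edges blue or green by Theorem~\ref{coreType}), summing the stationarity equation over all vertices gives $k\lL_p(\tau)=p+(1-p)(k-1)+p\sum_v x_vd_g(v)\le (b-1)p+(1-p)(k-1)$, hence $\lL_p(\tau)\le 1-p+(bp-1)/k$, and then a short case split on $p\lessgtr 1/b$ shows this is strictly below $\lL_p(\tau(0,a))$ or $\lL_p(\tau(b-1,0))$ respectively. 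You gesture at ``following \cite[Lemma 5.4]{MT}'' for bookkeeping, but the computation is the crux of the lemma (and the reason the hypothesis $a\ge 2$ is needed), not bookkeeping; as written your argument does not establish it.
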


\begin{proof}
Consider any $p$-core type $\tau$ not describing $I_{a,b}$.
By Fact \ref{typemono} all its vertices have the same colour.
Suppose first that all vertices are blue.
By Fact \ref{typenb} we have $|V(\tau)| \le a$,
so the maximum $p$-value is achieved when $\tau = \tau(0,a)$.
Now suppose that all vertices are red.
By Theorem \ref{coreType} all edges of $\tau$ are blue or green.
If $\tau$ has at most $b-1$ vertices then the maximum $p$-value is achieved when $\tau = \tau(b-1,0)$.
Now suppose that $\tau$ has $k \ge b$ vertices.
Fix $x \in \DD(\tau)$ such that $\lL_p(\tau) = x^T W_p(\tau) x$.
By Fact \ref{vary}, for any $u \in V(\tau)$ we have
\[\lL_p(\tau)  = px_u + (1-p)(1-x_u) + p\sum_{v: uv \in E(G_g)} x_v.\]
Summing over $u$ we we obtain 
$k\lL_p(\tau)  = p + (1-p)(k-1) + p\sum_v x_v d_g(v)$.
By Fact \ref{typenb} we have $d_g(v) \le b-2$ for all $v$,
so $k\lL_p(\tau) \le (b-1)p+(1-p)(k-1)$,
giving $\lL_p(\tau) \le 1-p + (bp-1)/k$.
Finally, we consider two cases according to the value of $p$.
If $p \le 1/b$ we have $bp-1 \le 0$, so as $a \ge 2$,
we have $\lL_p(\tau) \le 1-p < 1-\tfrac{p}{a} = \lL_p(\tau(0,a))$.
If $p > 1/b$ we have $bp-1 > 0$,
so $\lL_p(\tau) \le 1-p + (bp-1)/b = 1-1/b < 1-\tfrac{1-p}{b-1} = \lL_p(\tau(b-1,0))$.
\end{proof}

Note that Lemma \ref{uniquetype} is false for $a=1$ when $p = 1/b$ (see the concluding remarks).

% [If $a=1$ equality holds only when $p=1/b$ and $d_g(v)=b-2$ for all $v \in V(\tau)$.]

\subsection{Extensions} \label{extensions}

For the first step in the proof of Theorem \ref{stability}
we will need a method for extending types.
We say that a type $\sS$ is an \emph{extension} of $\tau$
if we can delete a vertex of $\sS$ to obtain $\tau$.
The proof of the Lemma \ref{extend} is heavily based on \cite[Lemma 3.11]{MT},
but is sufficiently different that we provide the details for the convenience of the reader.
It also uses the following two lemmas.

\begin{lemma}(\cite[Lemma 3.13]{MT}) \label{MT3.13}
Let $\tau$ be a type and let $x \in \Delta$ 
satisfy $\lambda_p(\tau) =  x^T W_p(\tau) x$.
Let $\sigma$ be an extension of $\tau$ such that $\sum_{u \in V(\tau)} x_u w_p(uv) > \lL_p(\tau)$, 
where $V(\tau) = V(\sS) \sm \{v\}$. Then $\lambda_p(\sigma) > \lambda_p(\tau)$.

\end{lemma}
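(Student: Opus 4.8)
The plan is to give a short direct proof by perturbing the optimal vector $x$ for $\tau$ into the larger simplex $\DD(\sS)$, moving a small amount of mass onto the new vertex $v$. Write $V(\tau) = V(\sS) \sm \{v\}$, and for $t \in [0,1]$ define $x^{(t)} \in \DD(\sS)$ by $x^{(t)}_v = t$ and $x^{(t)}_u = (1-t) x_u$ for $u \in V(\tau)$; this is a valid probability vector on $V(\sS)$ for every $t \in [0,1]$, and $x^{(0)}$ is just $x$ viewed inside the larger simplex. Since $\lL_p(\sS) = \max_{y \in \DD(\sS)} y^T W_p(\sS) y \ge (x^{(t)})^T W_p(\sS)\, x^{(t)}$ for every $t$, it suffices to show that $f(t) := (x^{(t)})^T W_p(\sS)\, x^{(t)}$ strictly exceeds $f(0) = \lL_p(\tau)$ for some small $t>0$, which will follow from $f'(0) > 0$.

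The key computation is to expand $f(t)$. Using that the $W_p$-entries among $V(\tau)$ agree in $\tau$ and $\sS$, one gets
\[
f(t) = (1-t)^2\, x^T W_p(\tau) x \;+\; 2t(1-t) \sum_{u \in V(\tau)} x_u w_p(uv) \;+\; t^2 w_p(v).
\]
Differentiating at $t=0$ yields
\[
f'(0) = -2\, x^T W_p(\tau) x \;+\; 2 \sum_{u \in V(\tau)} x_u w_p(uv) \;=\; 2\Bigl( \sum_{u \in V(\tau)} x_u w_p(uv) - \lL_p(\tau) \Bigr),
\]
which is strictly positive precisely by the hypothesis $\sum_{u \in V(\tau)} x_u w_p(uv) > \lL_p(\tau)$. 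Hence $f$ is strictly increasing at $0$, so $f(t) > f(0) = \lL_p(\tau)$ for all sufficiently small $t>0$, giving $\lL_p(\sS) \ge f(t) > \lL_p(\tau)$ as required.

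I do not expect a serious obstacle here; the only points needing a little care are (i) checking $x^{(t)}$ genuinely lies in $\DD(\sS)$ (nonnegativity and sum-to-one are immediate from $t \in [0,1]$ and $x \in \DD(\tau)$), and (ii) making sure the cross term picks up the factor $2$ correctly because $W_p(\sS)$ is symmetric and the sum over ordered pairs $(u,v)$ and $(v,u)$ both contribute. One should also note the diagonal term $t^2 w_p(v)$ contributes nothing to $f'(0)$, so the diagonal convention $w_p(vv) = w_p(v)$ is harmless. An alternative phrasing, if one prefers to avoid calculus, is simply to write $f(t) - \lL_p(\tau) = -2t(1-t)\lL_p(\tau) + 2t(1-t)\sum_u x_u w_p(uv) - t^2\lL_p(\tau) + t^2 w_p(v) = 2t(1-t)\bigl(\sum_u x_u w_p(uv) - \lL_p(\tau)\bigr) + t^2(w_p(v) - \lL_p(\tau))$ and observe that for small $t$ the first (order-$t$) term dominates and is positive.
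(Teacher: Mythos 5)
Your proof is correct. The paper itself gives no proof of this lemma --- it is quoted verbatim from Marchant and Thomason \cite[Lemma 3.13]{MT} --- and your perturbation argument (shifting mass $t$ onto the new vertex $v$, expanding the quadratic form, and noting the linear-in-$t$ term is $2t\bigl(\sum_u x_u w_p(uv) - \lL_p(\tau)\bigr) > 0$) is the standard and essentially the original one; the expansion, the check that $x^{(t)} \in \DD(\sS)$, and the handling of the diagonal term are all accurate.
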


\begin{lemma}(\cite[Lemma 3.22]{MT}) \label{MT3.22}
Let $\tau$ be a $p$-core type where $p \leq 1/2$ and let 
$x \in \Delta( \tau)$ satisfy $x^T W_p(\tau) x= \lambda_p(\tau) $.
Let $B$ be the set of blue vertices of $\tau$, 
and suppose that all edges of $\tau$ incident with B are green.
Let $\sigma$ be an extension of $\tau$ such that 
$\sum_{u \in V(\tau)} x_u w_p(uv) > \lL_p(\tau)$, 
where $V(\tau) = V(\sS) \sm \{v\}$. 

Suppose now that $\tau'$ is a $p$-core sub-type of $\sigma$ 
with $\lambda_p( \tau') = \lambda_p( \sigma)$. 
Then $B \subset V(\tau' )$, and all edges of $\tau'$ 
incident with $B$ are green. The analogous statement holds
when $p \geq 1/2$ replacing `red' with `blue'.
\end{lemma}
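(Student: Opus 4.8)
The plan is to reduce the statement to the single claim $B\sub V(\tau')$ and then prove that by a ``swap'' argument. For the reduction I would first note that, since $\sum_{u\in V(\tau)}x_u w_p(uv)>\lambda_p(\tau)$, Lemma~\ref{MT3.13} gives $\lambda_p(\sigma)>\lambda_p(\tau)$; as $\lambda_p(\tau')=\lambda_p(\sigma)$ while any sub-type of $\sigma$ omitting $v$ is a sub-type of $\tau$ and hence has $p$-value at most $\lambda_p(\tau)$, it follows that $v\in V(\tau')$. Secondly, once $B\sub V(\tau')$ is known, each $w\in B$ is a blue vertex of the $p$-core type $\tau'$, so Theorem~\ref{coreType} (this is where $p\le1/2$ enters: the only possible non-green edges of $\tau'$ join two red vertices) forces every edge of $\tau'$ at $w$ to be green, which is the second assertion. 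So it remains to prove $B\sub V(\tau')$.

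For this I would argue by contradiction: suppose $w\in B$ but $w\notin V(\tau')$, fix a maximiser $y\in\Delta(\tau')$ of $y^{T}W_p(\tau')y$, put $V(\tau'')=V(\tau')\sm\{v\}\sub V(\tau)$, and let $\hat\tau$ be the sub-type of $\tau$ on $V(\tau'')\cup\{w\}$. The key move is to transport $y$ to $\hat y\in\Delta(\hat\tau)$ by shifting the mass on $v$ onto $w$, i.e.\ $\hat y_u=y_u$ for $u\in V(\tau'')$ and $\hat y_w=y_v$. Comparing the two quadratic forms, the contributions of all pairs inside $V(\tau'')$ cancel, leaving only the loop that moved from $v$ to $w$ and the edges between $V(\tau'')$ and the moved vertex. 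Since $w\in B$ every edge of $\tau$ at $w$ is green and so has weight $1$, the largest possible, while $w_p(v)\le1-p=w_p(w)$ because $p\le1/2$; hence these differences are all nonnegative and
\[
\hat y^{T}W_p(\hat\tau)\hat y-y^{T}W_p(\tau')y=\bigl((1-p)-w_p(v)\bigr)y_v^{2}+2y_v\sum_{u\in V(\tau'')}\bigl(1-w_p(uv)\bigr)y_u\ \ge\ 0 .
\]
This yields $\lambda_p(\hat\tau)\ge\hat y^{T}W_p(\hat\tau)\hat y\ge\lambda_p(\tau')=\lambda_p(\sigma)>\lambda_p(\tau)$, contradicting that $\hat\tau$ is a sub-type of $\tau$, and so establishes $B\sub V(\tau')$. (Note that $p$-coreness of $\tau'$ is in fact only needed for the second assertion.) For $p\ge1/2$ I would run the mirror-image argument, interchanging the two colours and $p$ with $1-p$: $\tau'$ being $p$-core then forces its non-green edges to join two blue vertices, one works with the red vertices of $\tau$, and $w_p(v)\le p=w_p(w)$ for the red vertex $w$ being swapped in keeps the swap non-decreasing.

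I expect the only real difficulty to be finding the right move. The naive idea of keeping $\tau'$ and merely adjoining $w$ would need a lower bound on $\sum_{u\in V(\tau')}y_u w_p(uw)$, which breaks down exactly when the one edge $wv$ is red or blue --- and nothing in the hypotheses controls that edge. Deleting $v$ and inserting $w$ in its place sidesteps this entirely, landing inside $\tau$ where Lemma~\ref{MT3.13} has already supplied the strict inequality. The rest is the routine bookkeeping indicated above.
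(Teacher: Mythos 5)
The paper does not prove this lemma --- it is quoted verbatim from Marchant and Thomason \cite[Lemma 3.22]{MT} --- so there is no in-paper argument to compare against; I have therefore checked your proof on its own terms, and it is correct. The reduction is sound: Lemma \ref{MT3.13} gives $\lambda_p(\sigma)>\lambda_p(\tau)$, monotonicity of $\lambda_p$ under subtypes forces $v\in V(\tau')$, and once $B\sub V(\tau')$ is known the second assertion is immediate from Theorem \ref{coreType} applied to the $p$-core type $\tau'$ (for $p\le 1/2$ the only permitted non-green edges join two red vertices, and for $p=1/2$ there are none). The swap argument is also right: with $w\in B$ all edges of $\tau$ at $w$ are green (weight $1$) and $w_p(w)=1-p\ge w_p(v)$ when $p\le 1/2$, so transferring the mass $y_v$ from $v$ to $w$ can only increase the quadratic form, the terms internal to $V(\tau'')$ being unchanged because $\sigma$ restricted to $V(\tau)$ is $\tau$; this places a subtype of $\tau$ at value $\ge\lambda_p(\sigma)>\lambda_p(\tau)$, a contradiction. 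Your closing remark correctly identifies why the naive ``adjoin $w$ to $\tau'$'' move fails (the uncontrolled edge $wv$), and deleting $v$ while inserting $w$ is exactly the right fix; this is in the same variational spirit as the Marchant--Thomason machinery the paper relies on (Fact \ref{vary}, Lemma \ref{MT3.13}).
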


\begin{lemma}\thlabel{extend}
For any $p,d \in (0,1)$, integer $m$ and type $\tau$ with $\lL_p(\tau) < d$
there are $\bB>0$ and integers $M$, $n_0$ such that 
for any $\bB$-whitened igraph $G$
on $n>n_0$ vertices with $\dD_p(G) > dn$, 
writing $V_w = \{x: d_w(x) \ge \sqrt{\bB} n \}$,
if $G \sm V_w$ contains a copy of $M \times \tau$ 
then $G \sm V_w$ contains $m \times \sS$ 
for some extension $\sS$ of $\tau$ with $\lL_p(\sS)>\lL_p(\tau)$.

Furthermore, if $p \le 1/2$, writing $B$ for the set of blue vertices of $\tau$,
if all edges of $\tau$ incident with $B$ are green and $\tau'$ is a $p$-core subtype of $\sS$
with $\lL_p(\tau') = \lL_p(\sS)$ then $B \sub V(\tau')$
and all edges of $\tau'$ incident with $B$ are green.
For $p \ge 1/2$ the analogous statement holds with `red' instead of `blue'.
\end{lemma}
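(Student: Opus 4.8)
The plan is to run a density-increment / supersaturation argument on the fixed copy of $M\times\tau$ inside $G\sm V_w$, and then invoke Lemmas~\ref{MT3.13} and~\ref{MT3.22} to convert the combinatorial conclusion into the statement about $p$-values and $p$-core subtypes. Fix $x\in\DD(\tau)$ with $\lL_p(\tau)=x^TW_p(\tau)x$, and pick a small constant $\eta>0$ with $\lL_p(\tau)+\eta<d$. The point of the hypothesis $\dD_p(G)>dn$ together with $\lL_p(\tau)<d$ is a gap of size at least $d-\lL_p(\tau)>\eta$ between the typical $p$-degree in $G$ and the $p$-weight supported by $\tau$: this forces, by an averaging argument over the $M$ blow-up classes of the copy of $M\times\tau$, the existence of many vertices $v\in V(G\sm V_w)$ that are ``well-connected'' to the copy in the sense that $\sum_{u\in V(\tau)}y_u\,w_p(uv)>\lL_p(\tau)$, where $y_u$ is the empirical distribution of neighbours of $v$ across the blow-up classes $V_u$. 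Since $v\notin V_w$ we have $d_w(v)<\sqrt{\bB}n$, so white pairs perturb this sum by at most $O(\sqrt\bB)$ and can be absorbed into $\eta$; thus for $\bB$ small the strict inequality survives with $x$ in place of $y$ (after a further averaging/pigeonhole step choosing the attachment colours to match one of finitely many extension types $\sS$).

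The key steps, in order, are: (i) set up constants $1/n_0\ll\bB\ll 1/M\ll 1/m\ll\eta\ll d-\lL_p(\tau)$; (ii) given the embedded $M\times\tau$, use a Kővári–Sós–Turán / sunflower-type counting argument inside $G\sm V_w$ to find $m$ vertices $v_1,\dots,v_m$ outside the copy, each sending the ``same'' colour pattern to $m$ suitably chosen vertices in each class $V_u$, so that together with these chosen vertices they induce a copy of $m\times\sS$ for a single extension $\sS$ of $\tau$ — here we only need $M$ large enough relative to $m$ and the number of possible patterns, which is bounded in terms of $|V(\tau)|$; (iii) check that the defining inequality $\sum_{u}x_u\,w_p(uv)>\lL_p(\tau)$ holds for the chosen pattern (this is where the $p$-degree gap and the smallness of the white degree are used); (iv) apply Lemma~\ref{MT3.13} to conclude $\lL_p(\sS)>\lL_p(\tau)$; (v) for the ``furthermore'' part, when $p\le 1/2$ and all edges of $\tau$ at $B$ are green, observe that the same inequality lets us apply Lemma~\ref{MT3.22} directly to $\sS$ and any $p$-core subtype $\tau'$ of $\sS$ with $\lL_p(\tau')=\lL_p(\sS)$, yielding $B\sub V(\tau')$ with all edges at $B$ green; the $p\ge 1/2$ case is symmetric.

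The main obstacle I expect is step~(iii) combined with the bookkeeping in step~(ii): one has to guarantee that the \emph{average} attachment pattern witnessing the $p$-degree surplus is actually \emph{realised} by enough vertices sharing one common pattern (so that the blow-up $m\times\sS$ exists for a \emph{single} $\sS$), rather than merely existing ``on average''. This is handled by a two-level pigeonhole — first over the colours of edges from $v$ to the classes $V_u$ (finitely many patterns, so a positive fraction of the surplus vertices share one pattern $\sS$), then within each class using that $M\gg m$ to extract $m$ vertices of $V_u$ with the prescribed colour to all $m$ chosen $v_i$'s, which is exactly a bipartite-Ramsey/blow-up extraction that costs only a tower-type loss in $M$ versus $m$. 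The role of $V_w$ is purely to discard the $O(\bB/\sqrt\bB\cdot n)=O(\sqrt\bB\,n)$ vertices whose white degree is too large to be controlled; since the copy of $M\times\tau$ is assumed to live in $G\sm V_w$, every vertex of $\tau$'s blow-up and every candidate extension vertex already has white degree $<\sqrt\bB n$, so all the perturbation estimates go through uniformly.
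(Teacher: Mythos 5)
Your proposal takes essentially the same route as the paper: use the gap between $\dD_p(G) > dn$ and $\lL_p(\tau)$ to find many vertices with a weight-surplus attachment to the blowup, pigeonhole on attachment patterns, clean up with a Ramsey-type extraction, and then invoke Lemmas~\ref{MT3.13} and~\ref{MT3.22} to get $\lL_p(\sS) > \lL_p(\tau)$ and the ``furthermore'' statement. The structure and the role of $V_w$ are correctly identified.

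One detail worth being careful about in your step~(ii)/(iii): a vertex $v$ does not have a single ``colour pattern'' to a blow-up class $W_u$ (it may send different colours to different vertices of $W_u$), so pigeonholing first on colours is not well defined. The paper first selects, for each $y$ and each $u$, an $m$-subset $U_u(y) \sub W_u$ to which $y$ is monochromatic non-white \emph{and} whose weight $w_u$ is not much below the average $d_p(y,X_u)/|X_u|$ (this second constraint is what makes $\sum_u x_u w_u > \lL_p(\tau)$ survive after rounding to a single colour); only then does it pigeonhole over the finitely many pairs $(U(y),\text{colours})$, pass to a white-free subset via Tur\'an's theorem, and finish with a full (not bipartite) Ramsey argument to get the extension class $U_v$ monochromatic. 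Your sketch captures the idea but would need this ordering and the controlled choice of $U_u(y)$ to make the inequality in step~(iii) go through.
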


\begin{proof}
Fix $x \in \DD(\tau)$ with $x^T W_p(\tau) x = \lL_p(\tau)$. 
By Lemmas \ref{MT3.13} and \ref{MT3.22}, it suffices to find 
$m \times \sS$ for some extension $\sS$ of $\tau$ such that 
$\sum_{u \in V(\tau)} x_u w_p(uv) > \lL_p(\tau)$, 
where $V(\tau) = V(\sS) \sm \{v\}$.
We choose the following parameters. 
Choose $\eta>0$ so that $5|V(\tau)|\eta < d-\lL_p(\tau)$
and $\eta < x_u/6$ for any $u \in V(\tau)$ with $x_u \ne 0$. 
Choose $M > 6\eta^{-1}m|V(\tau)|$.
Choose $\bB>0$ so that $|V(\tau)| \sqrt{\bB} < \eta^2/2$,
$\bB^{1/3} < \brac{3\tbinom{M}{m}}^{-|V(\tau)|} \eta/3$
and $\bB^{1/3} < 1/3R(m)$, where $R(m)$ is the two-colour 
Ramsey number for $K_m$.
Choose $n_0 > M/9\eta$.

Now let the copy of $M \times \tau$ in $G$ have vertex classes $(W_u: u \in V(\tau))$ and let $W = \cup_u W_u$.
For each $u$ fix $X_u \sub W_u$ with $|X_u| = \bcl{x_u M}$ 
and let $X = \cup_u X_u$.
Let $Y$ be the set of $z \in V(G) \sm X$ 
such that $d_p(z,X) > (d-\eta)|X|$. We have
\[ dn|X| < \sum_{y \in X} d_p(y) = \sum_{z \in V(G)} d_p(z,X)
\le (|X|+|Y|)|X| + n(d-\eta)|X|.\]
This gives $|X|+|Y| \ge \eta n$, so $|Y| \ge \eta n - M$.
Let $Y_0$ be the set of $z \in Y$ such that $d_w(z,W_u) > \eta M$ for some $u \in V(\tau)$. We have
\[ |Y_0| \eta M < \sum_{z \in Y_0} d_w(z,W) = \sum_{x \in W} d_w(x,Y_0) < |V(\tau)| M \sqrt{\bB} n.\]
We deduce that $|Y_0| < \eta n/2$. We also note that 
\[ |V_w| \sqrt{\bB}n \le \sum_{y \in V_w} d_w(y) \le 2|G_w| < 2\bB n^2, \]
so $|V_w| < 2 \sqrt{\bB}n$. Then letting $Y' = Y \sm (Y_0 \cup V_w)$,
we have $|Y'| > \eta n - M - \eta n/2 - 2 \sqrt{\bB}n > \eta n/3$.

Now fix any $y \in Y'$. We claim that for each $u \in V(\tau)$ 
we can choose $U_u \sub W_u$ with $|U_u|=m$, such that the edges 
between $y$ and $U_u$ are all the same colour, which is not white.
Writing $w_u$ for the weight of each edge between $y$ and $U_u$
(which is well-defined as they all have the same colour),
we claim moreover that if $x_u \ne 0$ we can choose $U_u$ 
so that $w_u \ge d_p(y,X_u)/|X_u| - 3\eta/x_u$.
To see these claims, suppose first that $x_u=0$. 
There are at least $(1-\eta)M$ vertices $a \in W_u$
such that $ay$ is not white (as $y \notin Y_0$), 
so at least $(1-\eta)M/3 \ge m$ 
have the same colour, as required. 
Now suppose that $x_u>0$ but the claims fail.
Then there are at most $3m$ coloured edges and at most $\eta M$ white edges
between $y$ and $W_u$ that satisfy the required bound for $w_u$. 
By choice of $\eta$ we have $3m < \eta M$ 
and $x_u-2\eta < 2x_u/3$, so we obtain 
\begin{align*}
 d_p(y,X_u) & <  2\eta M + (|X_u| - 2\eta M) (d_p(y,X_u)/|X_u| - 3\eta/x_u) \\
& \le 2\eta M + d_p(y,X_u) - (|X_u|- 2\eta M) (3\eta/x_u) \\
& \le 2\eta M + d_p(y,X_u) - (2	x_u M/3) (3\eta/x_u) \le d_p(y,X_u).
\end{align*}
This contradiction establishes the claims. An immediate consequence is that
\[ \sum_u x_u w_u \ge \sum_u (d_p(y,X_u)/M-3\eta) 
> d-5|V(\tau)|\eta > \lL_p(\tau).\]
Let $U(y) = \cup_u U_u$. We choose $Z \sub Y'$
such that $U(y)$ is the same set for all $y \in Z$
and the edges between $y$ and $U(y)$ are coloured 
identically for all $y \in Z$;
by the pigeonhole principle we can take 
$|Z| \ge \brac{3\tbinom{M}{m}}^{-|V(\tau)|} |Y'| > \bB^{1/3} n$. 
Since $|G_w| < \bB n^2$, by Tur\'an's theorem we can find $S \sub Z$ 
that does not contain any white edges with $|S| > \bB^{-1/3}/3 > R(m)$.
Then by Ramsey's theorem we can find $U_v \sub S$ of size $m$
in which all edges are red or all edges are blue (counting green as either).
Now $(U_u: u \in V(\sS))$ is a copy of $m \times \sS$ in $G \sm V_w$
with $\sum_u x_u w_p(uv) = \sum_u x_u w_u > \lL_p(\tau)$,
so $\lL_p(\sS) > \lL_p(\tau)$.
\end{proof}

By iterating the previous lemma, in the following lemma we deduce that 
we can find a blowup of a $p$-core type that either has large $p$-value 
or a large number of vertices.

\begin{lemma}\thlabel{extendlarge}
For any $p,d \in (0,1)$ and integers $s,t$ there are $\bB>0$ and $n_0$ 
such that for any $\bB$-whitened igraph $G$ on $n>n_0$ vertices with $\dD_p(G) > dn$, 
writing $V_w = \{x: d_w(x) \ge \sqrt{\bB} n \}$, $G \sm V_w$ contains $s \times \tau$ 
for some $p$-core type $\tau$ with $\lL_p(\tau) \ge d$ or $|V(\tau)| > t$.

Furthermore, there is some integer $S$ such that if $p \le 1/2$ and $G \sm V_w$ contains 
a blue clique of size $S$ then we can ensure that $\tau$ has at least one blue vertex. 
For $p \ge 1/2$ the analogous statement holds with `red' instead of `blue'. 
\end{lemma}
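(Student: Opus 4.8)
The plan is to iterate Lemma \ref{extend}, starting from a single-vertex type. For the base case I would argue as follows: since $G$ is $\bB$-whitened, the usual averaging bound $|V_w| \le 2\sqrt{\bB}n$ holds, so $G \sm V_w$ has at least $n/2$ vertices and at most $\bB n^2$ white pairs; by Tur\'an's theorem it then contains a clique with no white pair whose size tends to infinity as $\bB \to 0$, and hence, by Ramsey's theorem, for $\bB$ small enough in terms of a prescribed integer $m_0$ it contains a copy of $m_0 \times \tau_0$, where $\tau_0$ is a single red or a single blue vertex (a $p$-core type); here a clique all of whose edges are red-or-green is a copy of $m_0 \times(\text{red vertex})$, and similarly for blue. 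In the `furthermore' statement with $p \le 1/2$ I would instead take $\tau_0$ to be a single blue vertex and simply use the hypothesised blue clique of size $S := m_0$; note that, vacuously, every edge of $\tau_0$ incident with its unique blue vertex is green.

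The iteration step is the following. Given a copy of $m_i \times \tau_i$ in $G \sm V_w$ with $\tau_i$ a $p$-core type: if $\lL_p(\tau_i) \ge d$ or $|V(\tau_i)| > t$ we are done, restricting to $s$ vertices per class (legitimate once $m_i \ge s$); otherwise $\lL_p(\tau_i) < d$, so Lemma \ref{extend} applies with $m := m_{i+1}$ and yields a copy of $m_{i+1} \times \sS$ in $G \sm V_w$ for some extension $\sS$ of $\tau_i$ with $\lL_p(\sS) > \lL_p(\tau_i)$. I would then pass to a $p$-core subtype $\tau_{i+1}$ of $\sS$ with $\lL_p(\tau_{i+1}) = \lL_p(\sS)$ (obtained by repeatedly deleting value-preserving vertices) and restrict $m_{i+1} \times \sS$ to the surviving classes, giving $m_{i+1} \times \tau_{i+1}$ in $G \sm V_w$. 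Termination is the key observation: there are only finitely many $p$-core types on at most $t$ vertices, so the set $\Lambda_t = \{\lL_p(\tau) : \tau \text{ is $p$-core with } |V(\tau)| \le t\}$ is finite, and as long as the process has not stopped the $\tau_i$ are $p$-core with at most $t$ vertices and strictly increasing $p$-values; hence Lemma \ref{extend} is applied at most $N := |\Lambda_t|$ times.

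The parameters would be chosen backwards along this iteration of bounded length. I would set $m_N := s$ and, for $i = N-1, \dots, 0$, let $m_i$ be the maximum over all $p$-core types $\tau$ with $|V(\tau)| \le t$ and $\lL_p(\tau) < d$ of the integer $M$ that Lemma \ref{extend} supplies for $(p,d,m_{i+1},\tau)$; since this $M$ always exceeds $m_{i+1}$ one gets $m_0 \ge m_1 \ge \dots \ge m_N = s$, so every blowup produced has size at least $s$, and $m_0$ is large enough for the first application of Lemma \ref{extend} to $\tau_0$. Then $\bB$ is the minimum of the finitely many corresponding values of $\bB$ from Lemma \ref{extend} (also small enough for the base case), and $n_0$ the maximum of the corresponding $n_0$. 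One point to check is that Lemma \ref{extend} remains valid when its $\bB$ is decreased: decreasing $\bB$ only strengthens the hypothesis `$\bB$-whitened', every constraint on $\bB$ in its proof is an upper bound, and the estimate $|V_w| \le 2\sqrt{\bB}n$ still holds. This lets all applications be run with one common $\bB$, hence with one common set $V_w$, so the copy found at one step is exactly what the next step needs. I expect this consistency of $V_w$ (equivalently $\bB$) across the iteration, together with the backward choice of the $m_i$, to be the main — though ultimately minor — obstacle; everything else is mechanical given Lemma \ref{extend}.

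Finally, for the `furthermore' statement with $p \le 1/2$ (the case $p \ge 1/2$ being symmetric, with `red' in place of `blue') I would track a blue vertex through the iteration. Starting from $\tau_0 = $ a single blue vertex, I would maintain that each $\tau_i$ is a $p$-core type with a nonempty set $B_i$ of blue vertices all of whose incident edges are green — vacuous for $i = 0$, and automatic for $i \ge 1$ by Theorem \ref{coreType} since $\tau_i$ is $p$-core and $p \le 1/2$. The `furthermore' clause of Lemma \ref{extend} is then applicable at each step (the blue vertices of $\tau_i$ have only green incident edges) and gives $B_i \sub V(\tau_{i+1})$, so $\tau_{i+1}$ again has a blue vertex. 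Hence the type $\tau$ output by the iteration has at least one blue vertex, with $S = m_0$.
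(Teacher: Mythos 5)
Your proposal is correct and follows the paper's argument in all essentials: iterate Lemma \ref{extend}, pass at each step to a $p$-core subtype carrying the same $p$-value, terminate via the finiteness of $p$-core types on at most $t$ vertices, choose the blow-up sizes backwards along the bounded-length iteration (the paper's forward indexing $s_0,\dots,s_z$ with $s_0=s$, applied in the order $s_z,s_{z-1},\dots$, is the same scheme), and seed the iteration with a single blue vertex for the `furthermore' clause, using Theorem \ref{coreType} to see that the green-incidence hypothesis holds automatically for $p$-core types. The only cosmetic difference is the base case — the paper starts by applying Lemma \ref{extend} with $\tau=\es$, whereas you invoke Tur\'an and Ramsey directly to produce a monochromatic white-free clique, which is if anything a bit more careful since $\DD(\es)$ is empty — and your explicit check that Lemma \ref{extend} survives a decrease of $\bB$ (all constraints on $\bB$ in its proof are upper bounds and $|V_w|\le 2\sqrt{\bB}n$ still holds) is precisely the monotonicity the paper relies on implicitly when it sets $\bB=\prod_i \bB_i$.
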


\begin{proof}
Let $z$ be the number of types with at most $t$ vertices.
We define $s_0,\dots,s_z$, $\bB_1,\dots,\bB_z$ and $n_1,\dots,n_z$ inductively as follows:
let $s_0=s$, and for $i=1,\dots,z$ let $s_i$, $\bB_i$, $n_i$ be such that
Lemma \ref{extend} can be applied with $m=s_{i-1}$, $M=s_i$, $\bB=\bB_i$, $n_0=n_i$
and any type $\tau$ on at most $t$ vertices with $\lL_p(\tau)<d$. 
Let $n_0 = \sum_{i=1}^z n_i$ and $\bB = \prod_{i=1}^z \bB_i$. 
Now consider $G$ as in the statement of the lemma.
By Lemma \ref{extend} applied with $\tau=\es$ we obtain $s_z \times \tau_z$
for some type $\tau_z$ with one vertex. 
Then we construct a sequence $s_i \times \tau_i$ in $G \sm V_w$ for $i=z,z-1,\dots$,
where if $\lL_p(\tau_i)<d$ and $|V(\tau_i)| \le t$ we let $\sS$ be the type
provided by Lemma \ref{extend} applied to $\tau = \tau_i$ 
and let $\tau_{i-1}$ 
be a $p$-core subtype of $\sS$ with 
$\lL_p(\tau_{i-1}) = \lL_p(\sS) > \lL_p(\tau_i)$. 
This sequence must terminate at some $i>0$; 
then $\tau=\tau_i$ satisfies the lemma.
For the `furthermore' statement, we take $S=s_z$
and the type $\tau_z$ to be a blue vertex;
then the type $\tau$ constructed above
has the required properties by Theorem \ref{coreType}
and the `furthermore' statement of Lemma \ref{extend}.
\end{proof}

\section{Stability}

We prove Theorem \ref{stability} in two steps.
The first step is to find a copy of either $t \times \tau(0,a)$ or $t \times \tau(b-1,0)$ in $G$,
for some $t$ that is a large constant but small compared with $n$.
The second step is to study how the rest of the igraph interacts with this structure:
we will see that it extends to all but a small proportion of the vertices of $G$.
The following lemma accomplishes the first step.

\begin{lemma}\thlabel{stabpart1}
For every $a,b \ge 2$, $p \in (0,1)$ and integer $t$ there exists $n_0$ and $\bB>0$, 
such that for any $I_{a,b}$-free $\bB$-whitened igraph $G$ on $n > n_0$ vertices 
with $\dD_p(G) > (\kK_p(I_{a,b}) - \bB)n$, writing $V_w = \{x: d_w(x) \ge \sqrt{\bB} n \}$, 
\begin{itemize}[nolistsep]
\item if $p<\tfrac{a}{a+b-1}$ then $G \sm V_w$ contains $t \times \tau(0,a)$,
\item if $p>\tfrac{a}{a+b-1}$ then $G \sm V_w$ contains $t \times \tau(b-1,0)$,
\item if $p=\tfrac{a}{a+b-1}$ then $G \sm V_w$ contains $t \times \tau(0,a)$ or $t \times \tau(b-1,0)$.
\end{itemize}
\end{lemma}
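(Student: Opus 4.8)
The plan is to use Lemma~\ref{extendlarge} to pull out a constant-size blow-up $s\times\tau$ of a $p$-core type $\tau$ inside $G\sm V_w$, and then pin down $\tau$ as $\tau(0,a)$ or $\tau(b-1,0)$ using $I_{a,b}$-freeness, the structure theory of $p$-core types, and the uniqueness of the extremal types. I would fix $s\ge a+b$ and $s\ge t$, so that any type realised as an $s$-blow-up inside the $I_{a,b}$-free igraph $G$ cannot describe $I_{a,b}$ (otherwise $I_{a,b}\sub(a+b)\times\tau\sub s\times\tau\sub G$) and is hence monochromatic by Fact~\ref{typemono}. Set $d=\kK_p(I_{a,b})-\bB$, so the hypothesis $\dD_p(G)>(\kK_p-\bB)n$ is exactly what Lemma~\ref{extendlarge} requires; $\bB$ will be taken smaller than the ``gap'' $\gamma(T):=\kK_p-\max\bracc{\lL_p(\sS):\sS\ p\text{-core on }\le T\text{ vertices},\ \sS\text{ does not describe }I_{a,b},\ \sS\notin\{\tau(0,a),\tau(b-1,0)\}}$, which is positive for each fixed $T$ by Lemma~\ref{uniquetype} and finiteness, and also smaller than the amount $\gamma'>0$ by which $\lL_p(\tau(0,a))$ (if $p>\tfrac a{a+b-1}$), respectively $\lL_p(\tau(b-1,0))$ (if $p<\tfrac a{a+b-1}$), falls short of $\kK_p$.

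First I would run Lemma~\ref{extendlarge} with a large vertex-bound $t_1$ (fixed below), obtaining $s\times\tau_1\sub G\sm V_w$ with $\tau_1$ $p$-core and $\lL_p(\tau_1)\ge d$ or $|V(\tau_1)|>t_1$. In the first case $|V(\tau_1)|\le t_1$, so $\bB<\gamma(t_1)$ and Lemma~\ref{uniquetype} give $\lL_p(\tau_1)=\kK_p$ and $\tau_1\in\{\tau(0,a),\tau(b-1,0)\}$; whichever it is, $\lL_p$ attains $\kK_p$ only in the corresponding regime of $p$ (with $\bB<\gamma'$ ruling out the other), so shrinking each part to size $t\le s$ gives the required copy of $t\times\tau_1$. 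Moreover the second horn cannot occur if $\tau_1$ is all blue (then $|V(\tau_1)|\le a<t_1$ by Fact~\ref{typenb}), nor if $\tau_1$ is all red with $p\ge1/2$ (then all edges of $\tau_1$ are green by Theorem~\ref{coreType}, so $|V(\tau_1)|\le b-1<t_1$ by Fact~\ref{typenb}); so the only escape is $p<1/2$ with $\tau_1$ all red.

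In that remaining case every edge of $\tau_1$ is green or blue (Theorem~\ref{coreType}), so choosing one vertex from each of the $|V(\tau_1)|$ parts of $s\times\tau_1$ gives a set all of whose pairs are blue-or-green, i.e.\ a blue clique of size $|V(\tau_1)|>t_1$ in $G\sm V_w$. To exploit this I would fix $t_1$ to equal the integer $S$ produced by the ``furthermore'' clause of Lemma~\ref{extendlarge} when that lemma is run with vertex-bound $a$; since this $S$ depends only on $p,d,s$ once $s$ is fixed, the choice $t_1:=S$ is legitimate. Then $G\sm V_w$ contains a blue $K_S$, so re-running Lemma~\ref{extendlarge} with vertex-bound $a$ and its ``furthermore'' clause yields $s\times\tau_2\sub G\sm V_w$ with $\tau_2$ having a blue vertex; by Facts~\ref{typemono} and~\ref{typenb}, $\tau_2$ is all blue on at most $a$ vertices, so its second horn is excluded and $\lL_p(\tau_2)\ge d$. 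As before this forces $\tau_2=\tau(0,a)$; since $\lL_p(\tau(0,a))=\kK_p$ only when $p\le\tfrac a{a+b-1}$ this also shows the case does not arise when $p>\tfrac a{a+b-1}$ (there $\lL_p(\tau(0,a))\le\kK_p-\gamma'<d$ would be violated). Shrinking parts gives $t\times\tau(0,a)$, as needed.

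I expect the main obstacle to be exactly this interplay of parameters: Lemma~\ref{extendlarge} guarantees a blow-up only as wide as its input $s$ but a type only as narrow as its input vertex-bound, while in the bad case the blue clique it returns has size merely one more than that bound, whereas the ``furthermore'' threshold $S$ is far larger. The two-round device — fix $s$ first, let a second round run with the small vertex-bound $a$ determine $S$, then set the first round's vertex-bound to $t_1:=S$ — is what closes the loop. Everything else is routine bookkeeping: arranging the hierarchy $s\gg1$, then $S=S(p,d,s)$, then $t_1:=S$, then $\bB$ small enough for both invocations and below $\gamma(t_1)$ and $\gamma'$, with $d=\kK_p-\bB$ (the relevant thresholds and $S$ are bounded as $d\uparrow\kK_p$, so this fixed-point is harmless); checking $\gamma(T)>0$; and noting that ``contains $t\times\tau$'' follows from ``contains $s\times\tau$'' since $t\le s$.
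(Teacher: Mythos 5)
Your proposal is correct and follows essentially the same strategy as the paper: two rounds of Lemma~\ref{extendlarge}, with the ``furthermore'' clause used to upgrade the type to one with a blue vertex after a large blue clique is extracted from the escape case, and then Lemma~\ref{uniquetype} plus a gap argument to pin down the type.

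There are two small points of difference, neither fatal. First, the paper's two invocations of Lemma~\ref{extendlarge} use vertex bounds $t$ (small) and $S$ (large, the ``furthermore'' threshold from the first invocation), running the large-bound one first in the argument; you instead take vertex bounds $t_1:=S$ (large) and $a$ (small), with $S$ being the threshold from the small-bound run. Both orderings close the same loop, and your observation that the only genuine escape is ``$p<1/2$ and $\tau_1$ all red'' is a cleaner version of the case analysis than the paper's stated ``interchange red and blue for $p\ge 1/2$'' (for $p\ge 1/2$ the escape is in fact vacuous, since an all-blue $p$-core type has at most $a$ vertices by Fact~\ref{typenb}, and an all-red one with all edges green has at most $b-1$). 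Second, your choice $d=\kK_p-\bB$ builds in a circularity (since $S$, $t_1$, and the $\bB_i$'s depend on $d$, which depends on $\bB$); you are right that this is harmless because the relevant thresholds stabilize as $d\uparrow \kK_p$, but the paper's choice of a fixed $d=\kK_p-\bB_3$ with $\bB_3$ determined by the gap for types on $\le S$ vertices (and then $\bB$ a product of the various $\bB_i$'s, all $\le\bB_3$) is the cleaner way to express the hierarchy; I would recommend you adopt a fixed intermediate $d$ to avoid having to justify the fixed-point.
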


\begin{proof}
We can assume $t \ge a+b$.
Then if $G$ contains $ t \times \tau$ for some type $\tau$ 
then $\tau$ does not describe $I_{a,b}$. 
Since there are finitely many types $\tau$ on at most $S$ vertices, 
there is some $\bB_3$ such that no such $\tau$ has 
$\kK_p(I_{a,b})-\bB_3 \le \lL_p(\tau) < \kK_p(I_{a,b})$.
Let $\bB_1$, $n_1$, $S$ be such that Lemma \ref{extendlarge} can be applied with $d=\kK_p(I_{a,b}) - \bB_3$ and $s=t$.
Let $\bB_2$, $n_2$ be such that Lemma \ref{extendlarge} can be applied with $d=\kK_p(I_{a,b}) - \bB_3$ and $s=t=S$.
Let $\bB = \bB_1 \bB_2 \bB_3$ and $n_0 = n_1+n_2+n_3$.
Now consider $G$ as in the statement of the lemma. 
By choice of $\bB$ and Lemma \ref{uniquetype},
it suffices to show that $G \sm V_w$ contains $t \times \tau$ 
for some type $\tau$ 
with $|V(\tau)| \le S$ and $\lL_p(\tau) \ge \kK_p(I_{a,b}) - \bB_3$.

Suppose first that $p \le 1/2$. By Lemma \ref{extendlarge}, 
$G \sm V_w$ contains $S \times \tau$ 
for some $p$-core type $\tau$ 
with $\lL_p(\tau) \ge \kK_p(I_{a,b}) - \bB_3$ or $|V(\tau)| > S$.
We can assume that $|V(\tau)| > S$, otherwise we are done.
By Fact \ref{typemono} all vertices of $\tau$ have the same colour.
Next we show that by replacing $S$ by $t$
we can assume this colour is blue.
Indeed, if it is red, 
then by Fact \ref{coreType} all edges of $\tau$ are blue or green,
so there is a blue clique of size $S$ in $\tau$, and so in $G$. 
Then we apply Lemma \ref{extendlarge}
again to see that in any case $G \sm V_w$ contains $t \times \tau$ 
for some $p$-core type $\tau$ with blue vertices such that 
$\lL_p(\tau) \ge \kK_p(I_{a,b}) - \bB_3$ or $|V(\tau)| > t$.
Since all edges of $\tau$ are green by Fact \ref{coreType},
we cannot have $|V(\tau)| > t$, as $t \times \tau(0,t)$ contains $I_{a,b}$.
Then $\lL_p(\tau) \ge \kK_p(I_{a,b}) - \bB_3$, so we are done.
The argument for $p \ge 1/2$ is the same, interchanging `red' and `blue'.
\end{proof}

For the second step of the proof of Theorem \ref{stability}, we use double-counting 
to extend a large blowup of an extremal graph to almost all the vertex set.
The next lemma accomplishes this for the case $p \le \tfrac{a}{a+b-1}$.
Before stating it we record some observations on $I_{a,b}$ (we omit the easy proof).

\begin{fact} \label{findIa}
Let $G$ be an $I_{a,b}$-free whitened igraph containing a copy $P$ of $t \times \tau(0,a)$ with parts $P_1,\dots,P_a$.
\begin{itemize}[nolistsep]
\item There is no $z \in V(G) \sm P$ with $d_r(z,P_i) \ge 1$ for all $i$, 
and $d_r(z,P_j) \ge b$ for some $j$.
\item There is no red edge $zz'$ disjoint from $P$ such that $z$ and $z'$
have at least $b$ common red neighbours in some $P_j$,
and at least $1$ common red neighbour in all but one other $P_i$.
\end{itemize}
\end{fact}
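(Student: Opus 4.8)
Recall that for a whitened igraph $G$ one has $I_{a,b} \sub G$ precisely when there are disjoint $A,B \sub V(G)$ with $|A|=a$, $|B|=b$ such that every pair inside $B$ is blue or green and every other pair inside $A\cup B$ is red or green (in particular, no such pair is white). The copy $P = t\times\tau(0,a)$ has parts $P_1,\dots,P_a$ with every pair inside a part $P_i$ coloured blue and every pair between two distinct parts coloured green. The plan for both bullets is the same: argue by contradiction, placing the blue $K_b$ of the would-be eye inside the part $P_j$ in which many red neighbours are guaranteed, and building $A$ from the vertices lying outside $P$ together with one suitably chosen representative in each remaining part.

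For the first bullet I would suppose such a vertex $z\in V(G)\sm P$ exists. Since $d_r(z,P_j)\ge b$, pick $B\sub P_j$ with $|B|=b$ consisting of red neighbours of $z$; since $d_r(z,P_i)\ge 1$ for every $i\ne j$, pick a red neighbour $z_i\in P_i$ of $z$ for each such $i$, and set $A=\{z\}\cup\{z_i:i\ne j\}$, so $|A|=a$. One then checks the colours: the pairs inside $B$ lie in $P_j$ and are blue; the pairs $zz_i$ are red; the pairs $z_iz_{i'}$ with $i\ne i'$ lie between distinct parts of $P$ and are green; the pairs between $z$ and $B$ are red; and the pairs between $z_i$ and $B\sub P_j$ lie between distinct parts of $P$ and are green. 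Hence $A\cup B$ spans a copy of $I_{a,b}$, a contradiction.

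The second bullet goes through identically, except that the red edge $zz'$ now supplies two of the $a$ vertices of $A$. Let $k\ne j$ be the (at most one) part index for which $z,z'$ are \emph{not} guaranteed a common red neighbour. Take $B\sub P_j$ of size $b$ consisting of common red neighbours of $z$ and $z'$, take a common red neighbour $z_i\in P_i$ of $z$ and $z'$ for each of the $a-2$ indices $i\notin\{j,k\}$, and set $A=\{z,z'\}\cup\{z_i:i\notin\{j,k\}\}$, which has size $a$ (when $a=2$ this degenerates to $A=\{z,z'\}$). The colour verification is as before, now using also that $zz'$ is red by hypothesis, and again $A\cup B$ spans a copy of $I_{a,b}$, a contradiction. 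I expect no real obstacle here: the only points needing any care are the bookkeeping of which part indices land in $A$ versus $B$ (and the degenerate case $a=2$ in the second bullet), and the observation that whiteness never interferes, since every pair used is either red by our choices, or an intra-part (blue) or inter-part (green) pair of $P$ — which is precisely why the proof is routine.
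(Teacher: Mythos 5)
Your proof is correct. The paper states this fact without proof ("we omit the easy proof"), and what you have written is exactly the routine verification that would be supplied: in each bullet you locate the blue $K_b$ inside the part $P_j$ guaranteeing $b$ (common) red neighbours, and you build the red $K_a$ from the external vertex/vertices together with one representative per remaining part, using the built-in green inter-part and blue intra-part pairs of $P$ to supply the rest; the colour-by-colour check and the note that no white pair can intrude are all that is needed. The degenerate $a=2$ case in the second bullet and the bookkeeping of which part index is excluded are the only points requiring care, and you handle both.
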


\begin{lemma}\thlabel{stabpart2a}
For every $a,b \ge 2$, $0 < p \le \tfrac{a}{a+b-1}$ and $\aA>0$ there exists $t$ and $\bB>0$, 
such that for any $I_{a,b}$-free $\bB$-whitened igraph $G$ on $n$ vertices with 
$\dD_p(G) > (\kK_p(I_{a,b}) - \bB)n$ and $t \times \tau(0,a) \sub G$, 
there is $X \sub V(G)$ with $|X|<\aA n$ such that $G \sm X$ can be partitioned 
into $a$ sets each containing no red edge. 
\end{lemma}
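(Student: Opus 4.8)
\nim{Proof plan.} Since $p \le \tfrac{a}{a+b-1}$ we have $\kK_p(I_{a,b}) = 1-\tfrac{p}{a}$, so the desired partition is a qualitative version of the statement that $G$ is close to the extremal igraph $B_a(n)$. Fix a copy of $t \times \tau(0,a)$ with vertex classes $P_1,\dots,P_a$; passing to a sub-blowup if necessary (the green graph inside a class cannot contain $K_{a+b}$, so a large class contains a large blue clique) we may assume each $P_i$ is a blue clique, so $P_i$ has no red edge and every $P_i$--$P_j$ pair is green. Set $P = P_1 \cup \dots \cup P_a$ and $Q = V(G) \sm P$. The starting point is the classification coming from the first part of Fact \ref{findIa}: for every $v \in Q$, either $d_r(v,P_i) = 0$ for some $i$ (we call $v$ \emph{addable}, to any such part), or $d_r(v,P_i) \le b-1$ for all $i$ (we call $v$ \emph{light}). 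Place each addable vertex into a set $Q_i$ for some $i$ to which it is addable, and let $L \sub Q$ be the set of light vertices. Then $P_1 \cup Q_1,\dots,P_a \cup Q_a$ partition $V(G) \sm L$, each $P_i$ has no red edge, and, since no vertex of $Q_i$ has a red edge to $P_i$, the only red edges lying inside $P_i \cup Q_i$ are those lying inside $Q_i$.

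With this the lemma reduces to two tasks: (i) show that $|L|$ is small, and (ii) show that the red edges inside the $Q_i$ admit a small vertex cover $Y$; one then takes $X = L \cup Y$, so that $P_1 \cup (Q_1 \sm Y),\dots,P_a \cup (Q_a \sm Y)$ is the required partition of $V(G) \sm X$. Task (i) is where the minimum-$p$-degree hypothesis is essential, since a light vertex does not on its own create a copy of $I_{a,b}$. The plan is first to thicken the part structure: enlarge $P_1,\dots,P_a$ greedily to a maximal family of pairwise disjoint sets $V_i \sups P_i$ with no red edge inside any $V_i$. By maximality, every vertex outside $V_1 \cup \dots \cup V_a$ has a red edge to each $V_i$, and since $V_i$ still contains the blue clique $P_i$ the Fact \ref{findIa} dichotomy applies with the $V_i$ in place of the $P_i$. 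A vertex light with respect to the $V_i$ is blue-or-white to almost all of every $V_i$, so its $p$-degree is at most about $n - p\sum_i|V_i|$; comparing with the threshold $(1-\tfrac{p}{a}-\bB)n$ forces $\sum_i|V_i|$ to be a small fraction of $n$, which an estimate on the largest red-edge-free family containing $P$ rules out once $\bB$ is small. Hence there is no vertex light with respect to the $V_i$, and the same degree count bounds the number of vertices not covered by the $V_i$, hence $|L|$, by $\aA n/2$.

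For task (ii), suppose for contradiction that for some $i$ the red graph on $Q_i$ has no vertex cover of size $\aA n/(2a)$, hence contains a matching $M$ of red edges of linear size. Each endpoint of an edge of $M$ lies in $Q_i$, so is addable to part $i$ but not light; thus its red-degree profile $(d_r(v,P_1),\dots,d_r(v,P_a))$ has $i$-th coordinate $0$ and some coordinate at least $b$. As there are only boundedly many possibilities for this profile, and for the actual red neighbourhood of a vertex inside each fixed $P_j$ (which has $t$ vertices), we may pass to a linear sub-collection $M'$ all of whose edges realise a single common pattern, chosen so that the two endpoints have at least $b$ common red neighbours in some part $P_j$ with $j\ne i$ and at least one common red neighbour in all but one of the remaining parts. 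A single edge of $M'$ then contradicts the second part of Fact \ref{findIa}.

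I expect step (ii) to be the main obstacle: producing the \emph{common} red-neighbourhood configuration that Fact \ref{findIa} forbids. Knowing only that a non-light endpoint has red-degree at least $b$ to some part is too weak to force two matched endpoints to share $b$ red neighbours in a common part, so the classification must be sharpened to use quantitative thresholds (``addable'' meaning very small red-degree, the alternative meaning red-degree close to $t$ in the relevant parts), which is why $t$ has to be large compared with $b$; extracting this sharpened dichotomy from Fact \ref{findIa} is the delicate point. A secondary difficulty is the thickening in task (i) when $a=2$, where the gap between $1-\tfrac{p}{a}$ and the $p$-degree available to a light vertex is narrowest, so the enlargement must be pushed further before the degree estimate is decisive.
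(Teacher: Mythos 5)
The gap you flag in step~(ii) is real and is exactly where the proposal fails: the addable/light dichotomy of Fact~\ref{findIa} is too coarse. An addable, non-light vertex of $Q_i$ is only guaranteed to have $d_r(v,P_j)\ge b$ to \emph{some} part, so two matched endpoints need not share $b$ red neighbours in a \emph{common} part, and passing to a linear sub-matching $M'$ with a common ``pattern'' would require controlling the actual red neighbourhoods in $P$, not just the profile vector. The paper's resolution is to abandon the addable/light classification entirely and instead define $X$ as the set of $z\notin P$ with $d_p(z,P)<(a-p-\aA p)t$. This threshold is calibrated against the a priori upper bound $d_p(z,P)\le(a-p)t$ (your first bullet of Fact~\ref{findIa} plus the inequality $d_p(z,P)\le (1-p)at+pd_r(z,P)$), so a double-count against the minimum-$p$-degree hypothesis gives $|X|\le a\bB n/p\aA$ directly — no thickening is needed. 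More importantly, for any $y\notin P\cup X$, the $p$-degree lower bound first forces $d_r(y,P_i)\ge b$ for some $i$ (hence $d_r(y,P_j)=0$ for a unique $j$ by Fact~\ref{findIa}), and \emph{then}, because $y$ contributes zero red degree to $P_j$, the same bound forces $d_r(y,P_i)\ge (1-\aA)t>2t/3$ for \emph{every} $i\ne j$. This quantitative conclusion is the ``sharpened dichotomy'' you say is needed, and with it step~(ii) is immediate: any two $y,y'\in A_j$ have $>t/3>b$ common red neighbours in every $P_i$, $i\ne j$, so the second bullet of Fact~\ref{findIa} forbids a red edge $yy'$ outright — no matching or vertex cover argument is required.

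A secondary problem is the thickening in task~(i): Fact~\ref{findIa} applies only to a copy of $t\times\tau(0,a)$, i.e.\ it needs the parts to be blue cliques pairwise joined by green. Your enlarged $V_i\supseteq P_i$ are merely red-edge-free, and the pairs between $V_i$ and $V_j$ need not be green, so the dichotomy of Fact~\ref{findIa} does not transfer to the $V_i$. Since the paper's double-count against the minimum $p$-degree already bounds $|X|$ without any enlargement, this detour should be dropped.
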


\begin{proof}
Note that $\kK_p(I_{a,b})=1-p/a$. We can assume $\aA<1/4$. We choose $t>3b$ and $\bB < pa^{-1}\aA^2$.
Let $P_1,\dots,P_a$ be the parts of $t \times \tau(0,a)$ and let $P = \cup_{i=1}^a P_i$.
We claim that for any $z \in V(G) \sm P$ we have $d_p(z,P) \le (a-p)t$.
To see this, suppose for a contradiction that $d_p(z,P) > (a-p)t$.
Since $d_p(z,P) \le d_r(z,P) + (1-p)(at-d_r(z,P)) = (1-p)at + pd_r(z,P)$ we have $d_r(z,P) > (a-1)t$.
We deduce that $d_r(z,P_i) \ge 1$ for all $i$, and there is some $i$ with $d_r(z,P_i) \ge (a-1)t/a \ge b$.
However, this is a contradiction by Fact \ref{findIa}, so we have $d_p(z,P) \le (a-p)t$, as claimed.

Let $X = \{z \in V(G) \sm P: d_p(z,P) < (a-p-\aA p)t \}$. By double-counting and the claim we have
\[ (1-p/a-\bB)n at < \sum_{y \in P} d_p(y) = \sum_{z \in V(G)} d_p(z,P)
\le (n-|X|) (a-p)t + |X| (a-p-\aA p)t.\]
This gives $|X| \aA pt \le \bB n at $, so $|X| \le a\bB n/p\aA < \aA n$.

Next we claim that for any $y \in V(G) \sm (P \cup X)$ there is some part $P_j$ 
such that $d_r(y,P_j) = 0$ and $d_r(y,P_i) > 2t/3$ for all $i \ne j$.
To see this, note first that there must be some $i$ with $d_r(y,P_i) \ge b$, otherwise, as $a \ge 2$, 
we have $d_p(y,P) \le (1-p)at + pab \le at - 2p(t-b) < (a-p-\aA p)t$, contradicting the definition of $X$.
Then by Fact \ref{findIa} there is some $P_j$ such that $d_r(y,P_j) = 0$.
We deduce $d_p(y,P) \le (1-p)t + d_r(y,P) + (1-p)((a-1)t-d_r(y,P))$,
so $pd_r(y,P) \ge (a-p-\aA p)t - (1-p)at = (a-1-\aA)pt$.
Thus for every $i \ne j$ we have $d_r(y,P_i) \ge (1-\aA)t > 2t/3$, as claimed.

Finally, we partition $V(G) \sm X$ as $(A_1,\dots,A_a)$, where $A_j = \{y \in V(G) \sm X: d_r(y,P_j)=0\}$.
This partition satisfies the lemma, as any two vertices in $A_i$ have at least $t/3 > b$ common red neighbours 
in any $P_j$ with $j \ne i$, so by Fact \ref{findIa} they cannot form a red edge.
\end{proof}

Next we prove a lemma that accomplishes the second step of the 
proof of Theorem \ref{stability}
in the case $p \ge \tfrac{a}{a+b-1}$. 
Before stating it we again record some easy observations on $I_{a,b}$
(note that we include the `$+1$' in the bound for $d_r$ 
as the same edge can be both red and blue,
but we need to choose $a+1$ distinct vertices in $P_j$).

\begin{fact} \label{findIb}
Let $G$ be an $I_{a,b}$-free whitened igraph containing a copy $P$ 
of $t \times \tau(b-1,0)$ with parts $P_1,\dots,P_{b-1}$.
\begin{itemize}[nolistsep]
\item There is no $z \in V(G) \sm P$ with $d_b(z,P_i) \ge 1$ for all $i$, 
and $d_r(z,P_j) \ge a+1$ for some $j$.
\item There is no blue edge $zz'$ disjoint from $P$ such that $z$ and $z'$
have at least $a+1$ common red neighbours and $1$ common blue neighbour in some $P_j$,
and at least $1$ common blue neighbour in all but one other $P_i$.
\end{itemize}
\end{fact}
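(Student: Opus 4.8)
The plan is to prove each bullet by contraposition: if the configuration described occurred in $G$, one could exhibit a copy of $I_{a,b}$, contradicting that $G$ is $I_{a,b}$-free. Recall from Subsection \ref{igraphs} that a whitened igraph contains $I_{a,b}$ exactly when it has $a+b$ distinct vertices that split into a set $B$ of size $b$ with every pair inside $B$ blue (i.e.\ blue or green) and a set $A$ of size $a$ with every pair inside $A$, and every pair between $A$ and $B$, red (i.e.\ red or green). Recall also that the copy $P$, with parts $P_1,\dots,P_{b-1}$, has every pair inside a part red and every pair between two distinct parts green, so a green pair counts simultaneously as red and as blue. Hence a red clique for the role of $A$ can be found inside a single part, while a blue clique for the role of $B$ must use at least one vertex outside $P$ together with at most one vertex per part; this is also the reason for the ``$+1$'' in the hypotheses, since the single part used to assemble $A$ may have to contribute a vertex to $B$ as well.

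For the first bullet, suppose $z \in V(G) \sm P$ has $d_b(z,P_i) \ge 1$ for every $i$ and $d_r(z,P_j) \ge a+1$ for some $j$. For each $i$ choose $u_i \in P_i$ with $zu_i$ blue; since the $u_i$ lie in distinct parts they are pairwise joined by green edges and each $zu_i$ is blue, so $B = \{z,u_1,\dots,u_{b-1}\}$ is a blue $K_b$. Using $d_r(z,P_j) \ge a+1$, choose a set $A$ of $a$ red neighbours of $z$ inside $P_j \sm \{u_j\}$; this is possible because removing the single vertex $u_j$ leaves at least $a$ red neighbours of $z$ in $P_j$. Then every pair inside $A$ is red (same part), every pair from $A$ to $u_i$ with $i \ne j$ is green, every pair from $A$ to $u_j$ is red (same part $P_j$), and every pair from $A$ to $z$ is red by choice, so $A \cup B$ is a copy of $I_{a,b}$, the required contradiction.

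For the second bullet, suppose $zz'$ is a blue edge with $z,z' \notin P$, let $P_j$ contain at least $a+1$ common red neighbours of $z$ and $z'$, and let $J$ be the set of indices $i$ for which $P_i$ contains a common blue neighbour of $z$ and $z'$; by hypothesis $j \in J$ and $|J| \ge b-2$. Choose $J' \sub J$ with $|J'| = b-2$ and with $j \in J'$ whenever $b \ge 3$, and for each $i \in J'$ choose a common blue neighbour $w_i \in P_i$. Then $B = \{z,z'\} \cup \{w_i : i \in J'\}$ has size $b$ and is a blue clique, since $zz'$, each $zw_i$ and each $z'w_i$ are blue and distinct parts are joined by green edges. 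Now choose a set $A$ of $a$ common red neighbours of $z$ and $z'$ inside $P_j$, avoiding $w_j$ when $j \in J'$; this is possible as $P_j$ has at least $a+1$ such neighbours. Checking colours exactly as in the first bullet shows that $A \cup B$ is a copy of $I_{a,b}$. In both bullets the $a+b$ chosen vertices are pairwise distinct, because $z$ and $z'$ lie outside $P$, the parts are disjoint, and $u_j$ (resp.\ $w_j$) is removed before $A$ is selected.

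I do not expect any real obstacle: the argument is a direct construction, and the only point needing care is keeping the $a+b$ vertices distinct, which — as the statement itself signals — is exactly what the ``$+1$'' in the bounds on $d_r$ provides. The same scheme, with the roles of red and blue interchanged, proves Fact \ref{findIa}; there the blue clique lies entirely inside one part, so no part does double duty and no ``$+1$'' is required.
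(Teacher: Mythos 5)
Your proof is correct and is essentially the argument the authors have in mind (the paper omits the proof as easy, remarking only on the reason for the ``$+1$'', which you correctly identify and use). Both bullets are handled by the natural direct construction of a copy of $I_{a,b}$, with the key bookkeeping point — that $u_j$ (respectively $w_j$) may need to be excluded when selecting $A \subseteq P_j$ — addressed properly via the ``$+1$'' slack; the case analysis for $b=2$ versus $b\ge 3$ in the second bullet and the comparison with Fact \ref{findIa} are also accurate.
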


\begin{lemma}\thlabel{stabpart2b}
For every $a,b \ge 2$, $\tfrac{a}{a+b-1} \le p < 1$ and $\aA>0$ there exists $t$ and $\bB>0$, 
such that for any $I_{a,b}$-free $\bB$-whitened igraph $G$ on $n$ vertices with 
$\dD_p(G) > (\kK_p(I_{a,b}) - \bB)n$ and $t \times \tau(b-1,0) \sub G$, 
there is $X \sub V(G)$ with $|X|<\aA n$ such that $G \sm X$ can be partitioned 
into $b-1$ sets each containing no blue edge. 
\end{lemma}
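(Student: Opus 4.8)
The plan is to transcribe the proof of Lemma \ref{stabpart2a}, now working with the red construction $P=t\times\tau(b-1,0)$ in place of the blue one, and with Fact \ref{findIb} in place of Fact \ref{findIa}. Throughout write $\kK_p(I_{a,b})=1-\tfrac{1-p}{b-1}$, so $(b-1)\kK_p(I_{a,b})=b-2+p$, and note that the hypothesis $p\ge\tfrac{a}{a+b-1}$ is, since $a,b\ge2$, equivalent to the \emph{strict} inequality $bp>1$; equivalently $(b-1)(1-p)t=(b-2+p)t-(bp-1)t$, and this strictness is exactly what drives the counting below. We may assume $\aA$ is sufficiently small in terms of $p$, we choose $t$ large in terms of $a,b,p$ (in particular $t>\tfrac{(b-1)a}{bp-1}$ and $t>\tfrac{5(a+1)}{4}$), and finally $\bB$ small in terms of $\aA,a,b,p$.

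\textbf{Step 1.} First I show $d_p(z,P)\le(b-2+p)t$ for every $z\in V(G)\sm P$. Call $P_i$ a \emph{red part} of $z$ if $d_b(z,P_i)=0$, so that all edges from $z$ to $P_i$ are red or white and $d_p(z,P_i)\le pt$. If $z$ has a red part, then bounding it by $pt$ and the other $b-2$ parts by $t$ gives $d_p(z,P)\le pt+(b-2)t=(b-2+p)t$. If $z$ has no red part, then $d_b(z,P_i)\ge1$ for all $i$, so the first item of Fact \ref{findIb} forces $d_r(z,P_i)\le a$ for all $i$; hence $d_p(z,P_i)\le(1-p)t+pa$ (worst case: $a$ green and $t-a$ blue edges), and $d_p(z,P)\le(b-1)((1-p)t+pa)=(b-2+p)t-(bp-1)t+(b-1)pa<(b-2+p)t$ by the choice of $t$. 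Now set $X=\{z\in V(G)\sm P:d_p(z,P)<(b-2+p-\aA(1-p))t\}$; double-counting via $\sum_{v\in P}d_p(v)=\sum_{z\in V(G)}d_p(z,P)$ together with $\dD_p(G)>(\kK_p(I_{a,b})-\bB)n$ yields $(b-2+p-(b-1)\bB)tn\le(b-2+p)tn-|X|\aA(1-p)t$, so $|X|\le\tfrac{(b-1)\bB}{\aA(1-p)}n<\aA n$ once $\bB<\tfrac{\aA^2(1-p)}{b-1}$.

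\textbf{Step 2.} Fix $y\in V(G)\sm(P\cup X)$. Since $d_p(y,P)\ge(b-2+p-\aA(1-p))t$ and each $d_p(y,P_i)\le t$, the dichotomy of Step 1 shows $y$ has a red part; and since a vertex with two red parts has $d_p(\cdot,P)\le(b-3+2p)t<(b-2+p-\aA(1-p))t$ (using $\aA<1$), the red part $P_{j(y)}$ is unique. Subtracting $d_p(y,P_{j(y)})\le pt$ leaves $\sum_{i\ne j(y)}d_p(y,P_i)\ge(b-2-\aA(1-p))t$, so each of these $b-2$ terms is at least $(1-\aA(1-p))t$; a short calculation with the edge-colour counts from $y$ into $P_i$ (recalling green edges contribute to both $d_r$ and $d_b$) then gives $d_r(y,P_i),d_b(y,P_i)>\tfrac{9t}{10}$ for all $i\ne j(y)$, and likewise $d_r(y,P_{j(y)})>\tfrac{9t}{10}$ (from $d_b(y,P_{j(y)})=0$ and $d_p(y,P_{j(y)})\ge(p-\aA(1-p))t$). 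Partition $V(G)\sm X$ into the $b-1$ nonempty sets $Q_j=P_j\cup\{y\in V(G)\sm(P\cup X):j(y)=j\}$.

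\textbf{Step 3.} Each $Q_j$ has no blue edge. Indeed $P_j$ is a red clique; there is no blue edge between $P_j$ and $Q_j\sm P_j$ since $d_b(y,P_j)=0$ for $y\in Q_j\sm P_j$; and if $y,y'\in Q_j\sm P_j$ formed a blue edge then for any $j'\ne j$ they would have more than $\tfrac{8t}{10}\ge a+1$ common red neighbours in $P_{j'}$, a common blue neighbour in $P_{j'}$, and a common blue neighbour in every part except $P_j$ (so in $P_{j'}$ and in all but one of the parts other than $P_{j'}$), which is exactly the configuration forbidden by the second item of Fact \ref{findIb}. (If $b=2$ there is no such $j'$, but then $\{y,y'\}$ is itself a blue $K_b$ and any $a$ common red neighbours of $y,y'$ in $P_1$ — of which there are more than $\tfrac{8t}{10}$ — complete a copy of $I_{a,2}$.) So $X$ and the $Q_j$ satisfy the lemma.

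The only step with genuine content is Step 1: when $z$ has no red part one really needs both Fact \ref{findIb} and the strict inequality $bp>1$, as the naive bound $d_p(z,P)\le(b-1)t$ exceeds $(b-2+p)t$; and one must keep track of the fact that green edges count in both colour degrees when moving between $d_p$ and $d_r,d_b$. Everything else is a routine adaptation of the proof of Lemma \ref{stabpart2a}.
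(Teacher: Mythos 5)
Your proof is correct and follows the same overall architecture as the paper's: bound $d_p(z,P)$ for all $z\notin P$, double-count to show $|X|<\aA n$, show that every surviving vertex has a (unique) ``red part'' $P_{j(y)}$ with $d_b(y,P_{j(y)})=0$ and large red/blue degree to the other parts, partition $V(G)\sm X$ according to $j(y)$, and finally invoke the second bullet of Fact~\ref{findIb} to rule out blue edges within a class. Where you genuinely depart from the paper is the proof of Claim~1, the bound $d_p(z,P)\le(b-2+p)t$. The paper splits into $p<1/2$ and $p\ge1/2$, derives green-degree lower bounds separately in each case, and needs a further ad hoc subcase when $b=2$; you instead observe the clean dichotomy that either some $P_i$ has $d_b(z,P_i)=0$ (so $d_p(z,P_i)\le pt$ and the other parts contribute at most $(b-2)t$) or $d_b(z,P_i)\ge1$ for all $i$, whereupon the first bullet of Fact~\ref{findIb} forces $d_r(z,P_i)\le a$ for all $i$ and the strict inequality $bp>1$ finishes. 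This is shorter, uniform in $p$ and $b$, and uses Fact~\ref{findIb} directly rather than going through green degrees; it seems a genuine simplification. You are also more careful than the paper in two places: you prove that the red part $P_{j(y)}$ is unique (which is needed for the $Q_j$ to be a well-defined partition, and which the paper does not spell out), and you handle $b=2$ in Step~3 explicitly (the paper's appeal to Fact~\ref{findIb} is vacuous there, since the second bullet as stated demands a common blue neighbour inside the unique part $P_1$, which does not exist; your direct construction of $I_{a,2}$ patches this). Two small nitpicks: the parenthetical claim that $p\ge\tfrac{a}{a+b-1}$ is \emph{equivalent} to $bp>1$ is only an implication, not an equivalence (though you use only the needed direction); and the displayed threshold $t>\tfrac{(b-1)a}{bp-1}$ is barely short of what is needed in Step~2 once the $\aA(1-p)t$ slack is subtracted, but this is absorbed by your stated freedom to take $\aA$ small in terms of $p$ and $t$ large, so it is not a gap.
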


\begin{proof}
The proof is similar but slightly different to that of Lemma \ref{stabpart2a}.
Note that $\kK_p(I_{a,b})=1-\tfrac{1-p}{b-1}$. We can assume $\aA<1/6b$. 
We choose $t>12ab$ and $\bB < (1-p)(b-1)^{-1}\aA^2$.
Let $P_1,\dots,P_{b-1}$ be the parts of $t \times \tau(b-1,0)$ 
and let $P = \cup_{i=1}^{b-1} P_i$.

We claim that for any $z \in V(G) \sm P$ we have $d_p(z,P) \le (b-2+p)t$.
To see this, suppose for a contradiction that $d_p(z,P) > (b-2+p)t$.
First we consider the case $p < 1/2$. Note that
\[(b-2+p)t < d_p(z,P) \le d_g(z,P) + (1-p)((b-1)t-d_g(z,P)) = 
(1-p)(b-1)t + p d_g(z,P),\] 
so $bt - t/p \le d_g(z,P)$. 
As $\tfrac{a}{a+b-1} \le p < 1/2$ (so $b>a+1$)
and $a \ge 2$ we have
\[d_g(z,P) \ge t(b - \tfrac{a +b - 1}{a}) 
= (a-2)bt/a + (b-a+1)t/a > t.\]
We deduce that $d_b(z,P_i) \ge 1$ for all $i$, 
and there is some $i$ with $d_r(z,P_i) > \tfrac{t}{b-1} > a$.
However, this is a contradiction by Fact \ref{findIb}.
Now suppose $p \ge 1/2$. Then
\[(b-2+p)t < d_p(z,P) \le d_g(z,P) + p((b-1)t-d_g(z,P)) 
= p(b-1)t + (1-p)d_g(z,P),\] 
so $d_g(z,P) > (b-2)t$.
This contradicts Fact \ref{findIb} as in the first case,
unless $b=2$ and $d_g(z,P) \le a$. 
Then $d_b(z,P) \ge d_g(z,P) \ge 1$ and 
\[ pt < d_p(z,P) \le a + p(d_r(z,P)-a) + (1-p)(t-d_r(z,P)) 
= (1-p)(t+a) + (2p-1) d_r(z,P),\] 
so $d_r(z,P)>t-a>a$, which contradicts Fact \ref{findIb}.
The claim follows.

Let $X = \{z \in V(G) \sm P: d_p(z,P) < (b-2+p-\aA(1-p))t \}$. 
By double-counting and the claim we have
\[ (1-\tfrac{1-p}{b-1}-\bB)n (b-1)t < \sum_{y \in P} d_p(y) = \sum_{z \in V(G)} d_p(z,P)
\le (n-|X|) (b-2+p)t + |X| (b-2+p-\aA(1-p))t.\]
This gives $|X| \aA(1-p)t \le \bB n (b-1)t $, 
so $|X| \le (b-1)\bB n/(1-p)\aA < \aA n$.

Next we claim that for any $y \in V(G) \sm (P \cup X)$ there is some part $P_j$ 
such that $d_b(y,P_j) = 0$ and $d_g(y,P_i) > 2t/3$ for all $i \ne j$.
First we will show that there is some $i$ with $d_r(y,P_i) \ge a+1$.
Indeed, otherwise, by definition of $X$, we would have
\[ (b-2+p-\aA(1-p))t \le d_p(y,P) \le (1-p)(b-1)t + p(b-1)(a+1),  \]
i.e.\ $(bp-1)t \le \aA (1-p) t + p(b-1)(a+1)$.
As $bp \ge \tfrac{ab}{a+b-1} \ge 4/3$ we obtain
$t/3 \le (1-p)t/6b + p(b-1)(a+1) < t/6 + 2ab$, 
which is a contradiction, 
so we can find $i$ as required.
Then by Fact \ref{findIb} there is some $P_j$ such that $d_b(y,P_j) = 0$.
We deduce $d_p(y,P) \le pt + d_b(y,P) + p((b-2)t-d_b(y,P))$,
so \[(1-p)d_b(y,P) \ge (b-2+p-\aA(1-p))t - p(b-1)t = (b-2-\aA)(1-p)t,\]
i.e.\ $yz$ is blue or green for all but at most $\aA t < t/6$ vertices $z \in P \sm P_j$.
Similarly, $d_p(y,P) \le pt + d_r(y,P) + (1-p)((b-2)t-d_r(y,P))$,
so \[pd_r(y,P) \ge (b-2+p-\aA(1-p))t - pt - (1-p)(b-2)t = ((b-2)p-\aA(1-p))t,\]
i.e.\ $yz$ is red or green for all but at most $\aA (1-p) t/p < t/6$ vertices $z \in P \sm P_j$.
Therefore $yz$ is green for all but at most $t/3$ vertices $z \in P \sm P_j$, which gives the claim.

Finally, we partition $V(G) \sm X$ as $(A_1,\dots,A_{b-1})$, where $A_j = \{y \in V(G) \sm X: d_b(y,P_j)=0\}$.
This partition satisfies the lemma, as any two vertices in $A_i$ have at least $t/3 > a$ common green neighbours 
in any $P_j$ with $j \ne i$, so by Fact \ref{findIb} they cannot form a blue edge.
\end{proof}

We deduce Theorem \ref{stability} by combining the previous lemmas.

\nim{Proof of Theorem \ref{stability}.}
Let $t$ and $\bB_1$ be such that we can apply Lemmas \ref{stabpart2a} and \ref{stabpart2b} 
with $\bB=\bB_1$ and $\aA/2$ in place of $\aA$.
Let $n_2$ and $\bB_2$ be such that we can apply Lemma \ref{stabpart1} with $n_0=n_2$ and $\bB=\bB_2$.
Let $\bB=(\aA\bB_1\bB_2/10)^2$ and $n_1$ be such that we can apply Lemma \ref{minDens} 
with $H=I_{a,b}$, $\dD=\bB$ and $n_0=n_1$. 
Let $n_0=n_1+n_2$ and $G$ be as in the statement of the theorem.
By Lemma \ref{minDens} we have $S \sub V(G)$ with $|S| \le \sqrt{\bB} n$ 
such that $\dD_p(G \sm S) > (\kK_p(I_{a,b})-\sqrt{\bB}) (n-|S|)$.
By Lemma \ref{stabpart1} we can find $t \times \tau \sub G \sm S$,
where $\tau$ is $\tau(0,a)$ if $p<\tfrac{a}{a+b-1}$, $\tau$ is $\tau(b-1,0)$ if $p>\tfrac{a}{a+b-1}$,
and $\tau$ is $\tau(0,a)$ or $\tau(b-1,0)$ if $p=\tfrac{a}{a+b-1}$.
By Lemmas \ref{stabpart2a} and \ref{stabpart2b} there is $X \sub V(G) \sm S$ with $|X|<\aA n/2$ such that 
the structure of $t \times \tau$ extends to $G \sm (S \cup X)$. By modifying the at most $\aA n^2$ pairs of $G$
that are incident to $S \cup X$ or are white we obtain $G' \in \mc{G}_p(n)$, as required. \qed

\section{The exact result}

In this section we apply the stability method to refine our stability result and obtain an exact result.
We give some preliminary calculations and partite Tur\'an results in the first subsection,
then deduce our second main result in the second subsection.

\subsection{Preliminaries}

We start with a formula for the number of edges in a complete multipartite graph.

\begin{fact} \label{turansize}
Let $G$ be a complete $t$-partite graph on $n$ vertices with parts $V_i$ of sizes $n/t + a_i$ 
for $1 \le i \le t$. Then $2e(G) = (1-1/t)n^2 - \sum_{i=1}^t a_i^2$.
\end{fact}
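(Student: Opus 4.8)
The plan is to compute $2e(G)$ directly by subtracting the number of non-edges (the pairs inside parts) from $2\binom{n}{2}$, or equivalently by summing the co-degrees. Since $G$ is complete $t$-partite, a pair is a non-edge precisely when both endpoints lie in the same $V_i$, so $2e(G) = n^2 - n - \sum_{i=1}^t (|V_i|^2 - |V_i|) = n(n-1) - \sum_i |V_i|^2 + \sum_i |V_i|$. Using $\sum_i |V_i| = n$ this simplifies to $n^2 - \sum_{i=1}^t |V_i|^2$.

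Now I would substitute $|V_i| = n/t + a_i$ and expand: $\sum_{i=1}^t |V_i|^2 = \sum_{i=1}^t \bigl( n^2/t^2 + 2(n/t)a_i + a_i^2 \bigr) = n^2/t + (2n/t)\sum_{i=1}^t a_i + \sum_{i=1}^t a_i^2$. The constraint $\sum_i |V_i| = n$ forces $\sum_{i=1}^t a_i = 0$, so the cross term vanishes and $\sum_{i=1}^t |V_i|^2 = n^2/t + \sum_{i=1}^t a_i^2$. Plugging back gives $2e(G) = n^2 - n^2/t - \sum_{i=1}^t a_i^2 = (1 - 1/t)n^2 - \sum_{i=1}^t a_i^2$, as claimed.

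There is no real obstacle here: the only point that needs to be noted is that $\sum_i a_i = 0$, which is immediate from $\sum_i |V_i| = n$ together with $|V_i| = n/t + a_i$; this is what kills the linear term in the expansion. (Note also that $n/t$ need not be an integer — the $a_i$ are arbitrary reals summing to zero, so the identity is purely algebraic and the statement makes sense regardless of divisibility.)
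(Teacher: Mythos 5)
Your proof is correct and is essentially the paper's argument: the paper sums vertex degrees to get $2e(G) = \sum_i |V_i|(n-|V_i|) = n^2 - \sum_i |V_i|^2$, which is the same quantity you reach by counting non-edges, and both then expand using $\sum_i a_i = 0$.
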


\nim{Proof.}
Note that $\sum_{i=1}^t a_i = 0$. Each vertex in $V_i$ has degree $n-|V_i|$, so 
\begin{equation}
2e(G) = \sum_{v \in V(G)} d(v) = \sum_{i=1}^t (n/t+a_i)(n-n/t-a_i) = (1-1/t)n^2 - \sum_{i=1}^t a_i^2. \tag*{$\Box$}
\end{equation}

We deduce the following formulae for the weights of the extremal igraphs.

\begin{fact} \label{extremalweights}
For some $0 \le C \le a/8$ and $0 \le C' \le (b-1)/8$ we have
\begin{align*} 
w_p(B_a(n)) & = (1-\tfrac{p}{a}) \tbinom{n}{2} + p(1-1/a)n/2 - C, \text{\ and \ } \\
w_p(R_{b-1}(n)) & = (1-\tfrac{1-p}{b-1}) \tbinom{n}{2} + (1-p)(1-1/(b-1))n/2 - C'.
\end{align*}
\end{fact}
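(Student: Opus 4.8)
The plan is to apply Fact \ref{turansize} to each of the two extremal igraphs, treating them as (complements of) complete multipartite graphs in the two-coloured viewpoint. First, recall that $B_a(n)$ consists of $a$ disjoint blue cliques joined by green edges, with part sizes as equal as possible. In the two-coloured viewpoint a green edge contributes weight $1$ and a blue edge weight $1-p$, so $w_p(B_a(n)) = (1-p) e(K) + |G_g| + p\cdot 0$ where the blue edges are those inside the $a$ parts and the green edges are those between parts. Writing $b_i$ for the blue edge count inside part $i$, we have $w_p(B_a(n)) = \tbinom{n}{2} - p \sum_i b_i$, since every pair is either blue (weight $1-p = 1 - p$) or green (weight $1$), i.e. we start from full weight $\tbinom{n}{2}$ and subtract $p$ for each blue pair. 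So the task reduces to computing $\sum_i b_i$, the number of edges in the disjoint union of $a$ near-equal cliques on $n$ vertices, which is $\tbinom{n}{2}$ minus the number of edges of the balanced complete $a$-partite graph. By Fact \ref{turansize} this latter quantity is $\tfrac12\left((1-1/a)n^2 - \sum_i a_i^2\right)$ where $a_i = n_i - n/a$ and $n_i$ are the part sizes; hence $\sum_i b_i = \tbinom{n}{2} - \tfrac12(1-1/a)n^2 + \tfrac12\sum_i a_i^2 = -\tfrac{n}{2}(1-1/a)\cdot(-1)\ldots$ — more cleanly, $\tbinom{n}{2} - \tfrac12(1-1/a)n^2 = \tfrac{n^2 - n}{2} - \tfrac{n^2 - n^2/a}{2} = \tfrac{n^2/a - n}{2} = \tfrac{n}{2}(n/a - 1) = -\tfrac{n}{2}(1-1/a) + \tfrac{n(n/a-1) + n(1-1/a)}{2}$; it is cleaner just to write $\sum_i b_i = \tfrac{n^2/a - n}{2} + \tfrac12\sum_i a_i^2$. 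Plugging in, $w_p(B_a(n)) = \tbinom{n}{2} - p\left(\tfrac{n^2/a-n}{2} + \tfrac12\sum_i a_i^2\right) = (1-\tfrac{p}{a})\tbinom{n}{2} + \tfrac{p}{2}(1-1/a)n - \tfrac{p}{2}\sum_i a_i^2$, where I have used $\tbinom{n}{2} - p\cdot\tfrac{n^2/a-n}{2} = \tbinom{n}{2} - \tfrac{p}{a}\tbinom{n}{2}\cdot\tfrac{n^2-an}{n^2-n} = (1-\tfrac{p}{a})\tbinom{n}{2} + \tfrac{p}{2}(1-1/a)n$ after the routine algebra. Setting $C = \tfrac{p}{2}\sum_i a_i^2$ gives the claimed form; the bound $0 \le C \le a/8$ follows because each $|a_i| < 1$ (balanced parts), so $\sum_i a_i^2 < a\cdot\tfrac14$ — actually one gets $\sum a_i^2 \le a/4$ since each $a_i$ lies in $(-1,1)$ and in fact within $[-1+1/a, 1-1/a]$, making $\sum a_i^2$ maximized near $a\cdot(1/2)^2 = a/4$, and then $C = \tfrac{p}{2}\sum a_i^2 \le \tfrac{1}{2}\cdot\tfrac{a}{4} = a/8$ using $p<1$.

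The computation for $R_{b-1}(n)$ is entirely symmetric, with the roles of red and blue swapped: $R_{b-1}(n)$ is $b-1$ disjoint red cliques joined by green edges, a red edge has weight $p$, a green edge weight $1$, so $w_p(R_{b-1}(n)) = \tbinom{n}{2} - (1-p)\sum_i r_i$ where $\sum_i r_i$ is the number of edges in the disjoint union of $b-1$ near-equal cliques. The same application of Fact \ref{turansize} with $t = b-1$ yields $\sum_i r_i = \tfrac{n^2/(b-1) - n}{2} + \tfrac12\sum_i a_i^2$, hence $w_p(R_{b-1}(n)) = (1-\tfrac{1-p}{b-1})\tbinom{n}{2} + \tfrac{1-p}{2}(1-1/(b-1))n - C'$ with $C' = \tfrac{1-p}{2}\sum_i a_i^2$, and $0 \le C' \le (b-1)/8$ by the same estimate.

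I do not anticipate a genuine obstacle here: the statement is a direct corollary of Fact \ref{turansize}, and the only care needed is bookkeeping — correctly translating "blue/red clique joined by green" into "subtract $p$ (resp.\ $1-p$) per internal edge", and checking that the error term from the part sizes being only approximately equal is absorbed into the constants $C, C'$ with the stated bounds. The mildly fiddly part is the final estimate $\sum_i a_i^2 \le a/4$ (resp.\ $\le (b-1)/4$), which holds because with parts as equal as possible each $a_i$ differs from $0$ by less than $1$, so each $a_i^2 < 1$ and a short convexity argument pins the sum down to at most a quarter of the number of parts; this then gives $C \le a/8$ and $C' \le (b-1)/8$ after multiplying by $p/2 \le 1/2$ and $(1-p)/2 \le 1/2$ respectively.
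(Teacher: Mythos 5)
Your approach is the same as the paper's: decompose $w_p(B_a(n))$ in terms of the edges inside / between parts and apply Fact \ref{turansize} to the complete multipartite green graph. Whether one writes $w_p(B_a(n)) = \tbinom{n}{2} - p \cdot |\text{blue}|$ (as you do) or $w_p(B_a(n)) = (1-p)\tbinom{n}{2} + p \cdot |\text{green}|$ (as the paper does) is an algebraic rearrangement of the same computation, and your simplification to $(1-\tfrac{p}{a})\tbinom{n}{2} + \tfrac{p}{2}(1-1/a)n - \tfrac{p}{2}\sum_i a_i^2$ is correct.

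The one place your argument does not actually close is the estimate $\sum_i a_i^2 \le a/4$. The reasoning you give --- ``each $a_i$ lies in $[-1+1/a,\,1-1/a]$, so a convexity argument pins $\sum a_i^2$ down to about $a/4$'' --- does not follow: the constraints $|a_i| < 1$ and $\sum_i a_i = 0$ alone permit $\sum_i a_i^2$ to be close to $a$ (e.g.\ half the $a_i$ near $+1$ and half near $-1$), which would only give $C \le a/2$, far short of the stated $a/8$. What actually forces the bound is the specific two-valued structure of the balanced partition: if $n = qa + r$ with $0 \le r \le a-1$, then $r$ of the $a_i$ equal $1 - r/a$ and the remaining $a-r$ equal $-r/a$, so
\[
\sum_i a_i^2 \;=\; r\Bigl(1 - \tfrac{r}{a}\Bigr)^2 + (a-r)\Bigl(\tfrac{r}{a}\Bigr)^2 \;=\; r - \tfrac{r^2}{a} \;=\; a\,\tfrac{r}{a}\bigl(1 - \tfrac{r}{a}\bigr) \;\le\; \tfrac{a}{4},
\]
since $s(1-s) \le 1/4$. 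This explicit evaluation is exactly what the paper does, and it is needed; replace the hand-wave with it and your proof is complete. The $R_{b-1}(n)$ case is symmetric with $a$ replaced by $b-1$ and $p$ by $1-p$, as you say.
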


\begin{proof}
Write $n = qa + r$ where $0 \le r \le a-1$.
Then $B_a(n)$ has $r$ parts of size $q+1 = n/a + (1-r/a)$ and $a-r$ parts of size $q = n/a - r/a$.
By Fact \ref{turansize} we have
\[w_p(B_a(n)) = (1-p) \tbinom{n}{2} + p((1-1/a)n^2 - r(1-r/a)^2-(a-r)(r/a)^2)/2.\]
Since $r(1-r/a)^2+(a-r)(r/a)^2 = r - r^2/a \le a/4$ we obtain the first statement.
We omit the similar calculation for the second statement.
\end{proof}

The above formulae imply the following comparisons between the weights of the extremal igraphs
(we omit the easy deduction).

\begin{fact} \label{compareweights}
For $p \in (0,1)$ and large $n$ we have
\begin{itemize}[nolistsep]
	\item $w_p(R_{b-1}(n)) > w_p(B_a(n))$ if $p > \tfrac{a}{a+b-1}$ or $p = \tfrac{a}{a+b-1}$ and $a < b-1$,
	\item $w_p(R_{b-1}(n)) < w_p(B_a(n))$ if $p < \tfrac{a}{a+b-1}$ or $p = \tfrac{a}{a+b-1}$ and $a > b-1$,
	\item $w_p(R_{b-1}(n)) = w_p(B_a(n))$ if $p = \tfrac{a}{a+b-1} = 1/2$.
\end{itemize}
\end{fact}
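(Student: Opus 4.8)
The plan is to subtract the two formulae of Fact \ref{extremalweights} and determine which power of $n$ dominates the difference. Write $D(n) = w_p(R_{b-1}(n)) - w_p(B_a(n))$, so that Fact \ref{extremalweights} gives
\[
D(n) = \Big(\tfrac pa - \tfrac{1-p}{b-1}\Big)\tbinom n2 + \tfrac12\Big((1-p)\big(1-\tfrac1{b-1}\big) - p\big(1-\tfrac1a\big)\Big)n + (C-C'),
\]
where $|C-C'|$ is bounded by a constant depending only on $a,b$. The coefficient of $\tbinom n2$ is $\tfrac pa - \tfrac{1-p}{b-1}$, which is positive exactly when $p > \tfrac a{a+b-1}$ and negative exactly when $p < \tfrac a{a+b-1}$; since the remaining terms are $O(n)$, for all sufficiently large $n$ the sign of $D(n)$ agrees with this coefficient. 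This already yields the first two bullets whenever $p \neq \tfrac a{a+b-1}$.

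It then remains to treat the threshold value $p = \tfrac a{a+b-1}$, where the quadratic term cancels. Here I would substitute $1-p = \tfrac{b-1}{a+b-1}$ into the linear coefficient, obtaining $p\big(1-\tfrac1a\big) = \tfrac{a-1}{a+b-1}$ and $(1-p)\big(1-\tfrac1{b-1}\big) = \tfrac{b-2}{a+b-1}$, so that the coefficient of $n$ in $D(n)$ equals $\tfrac{(b-2)-(a-1)}{2(a+b-1)} = \tfrac{b-1-a}{2(a+b-1)}$. This is positive when $a < b-1$ and negative when $a > b-1$, and again, because $C-C'$ is bounded, for all large $n$ the sign of $D(n)$ matches it. This completes the first two bullets in the remaining sub-case $a \neq b-1$; note that under $p = \tfrac a{a+b-1}$ the condition $a \neq b-1$ is equivalent to $p \neq 1/2$, so the regimes match those in the statement.

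Finally, suppose $p = \tfrac a{a+b-1}$ and $a = b-1$, i.e.\ $p = 1/2$. Now both the quadratic and the linear coefficients of $D(n)$ vanish, so rather than estimating $C$ and $C'$ I would argue by symmetry: $R_{b-1}(n) = R_a(n)$ is obtained from $B_a(n)$ by interchanging the colours red and blue (this fixes the green edges and the balanced part sizes), so $|R_a(n)_r| = |B_a(n)_b|$ and $|R_a(n)_b| = |B_a(n)_r|$, whence $w_{1/2}(R_a(n)) = \tfrac12(|R_a(n)_r| + |R_a(n)_b|) = \tfrac12(|B_a(n)_b| + |B_a(n)_r|) = w_{1/2}(B_a(n))$, i.e.\ $D(n) = 0$ for every $n$. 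This gives the third bullet. None of these steps is a genuine obstacle; the only thing requiring a little care is the bookkeeping of the three regimes (the sign of $p - \tfrac a{a+b-1}$, the tie-breaker sign of $a - (b-1)$, and the diagonal $p = 1/2$) so that each bullet is produced exactly once, together with a quick check that the degenerate case $b-1 = 1$ — where $w_p(R_{b-1}(n)) = p\tbinom n2$ — fits the same pattern.
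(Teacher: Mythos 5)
Your deduction is correct and is precisely the ``easy deduction'' the paper omits: substitute Fact \ref{extremalweights}, observe that the sign of the $\tbinom{n}{2}$-coefficient $\tfrac{p}{a}-\tfrac{1-p}{b-1}$ is the sign of $p-\tfrac{a}{a+b-1}$, and at the threshold fall back to the linear coefficient $\tfrac{b-1-a}{2(a+b-1)}$. Your colour-swap symmetry for the case $p=\tfrac{a}{a+b-1}=1/2$ (equivalently $a=b-1$) is exactly the right move, since Fact \ref{extremalweights} only bounds $C$ and $C'$ and so cannot by itself yield the exact equality in the third bullet.
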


The following calculation will arise in a bound for the minimum $p$-degree
(note that the choice of constant `$3$' is not important).

\begin{fact} \label{difference}
We have 
\begin{align*}
w_p(R_{b-1}(n)) - w_p(R_{b-1}(n-1)) & \ge \kK_p(I_{a,b}) n - 3 \text{ if } p \ge \tfrac{a}{a+b-1}, \text{ and } \\
w_p(B_a(n)) - w_p(B_a(n-1)) & \ge \kK_p(I_{a,b}) n - 3 \text{ if } p \le \tfrac{a}{a+b-1}.
\end{align*}
\end{fact}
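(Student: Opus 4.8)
The plan is a direct computation from the explicit formulae in Fact \ref{extremalweights}. I treat the second inequality (for $w_p(B_a)$); the first is symmetric, with $(b-1,1-p)$ in place of $(a,p)$. Assume $p \le \tfrac{a}{a+b-1}$, so that $\kK_p(I_{a,b}) = 1 - \tfrac{p}{a}$. Write $C = C(n)$ for the non-negative constant appearing in Fact \ref{extremalweights}; inspecting its proof, $C(n) = \tfrac{p}{2} g(r)$ where $r = n \bmod a$ and $g(r) = r - r^2/a$, so $C(n)$ depends only on $n \bmod a$. Using $\tbinom{n}{2} - \tbinom{n-1}{2} = n-1$ and Fact \ref{extremalweights},
\[ w_p(B_a(n)) - w_p(B_a(n-1)) = \brac{1-\tfrac{p}{a}}(n-1) + \tfrac{p(1-1/a)}{2} - \brac{C(n) - C(n-1)}, \]
and subtracting $\kK_p(I_{a,b}) n = \brac{1-\tfrac{p}{a}} n$ leaves $-\brac{1-\tfrac{p}{a}} + \tfrac{p(1-1/a)}{2} - \brac{C(n)-C(n-1)}$.

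The first term is at least $-1$ and the second is non-negative, so it remains only to bound $\bsize{C(n) - C(n-1)}$. As $n$ increases by one, $r$ either increases by one or drops from $a-1$ to $0$. In the first case $g(r+1)-g(r) = 1 - (2r+1)/a$, whose modulus over the relevant range of $r$ is maximised at the endpoints and equals at most $1-1/a$; in the second case $g(a-1) - g(0) = (a-1)/a = 1-1/a$. Hence $\bsize{g(r_n) - g(r_{n-1})} \le 1 - 1/a$, so $\bsize{C(n) - C(n-1)} \le \tfrac{p}{2}(1-1/a) < \tfrac12$. Therefore the displayed difference is at least $-1 - \tfrac12 > -3$, which gives the claim; the same argument proves the $R_{b-1}$ inequality when $p \ge \tfrac{a}{a+b-1}$, since then $\kK_p(I_{a,b}) = 1 - \tfrac{1-p}{b-1}$.

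There is no real obstacle: everything reduces to the elementary estimate on the oscillation of $g(r) = r - r^2/a$ as $r$ ranges over residues modulo $a$. The only point requiring a moment's care is that $C(n)$ is not monotone in $n$, so one cannot simply invoke the crude bound $0 \le C \le a/8$ from Fact \ref{extremalweights} — that would only yield a bound of the form $-1 - a/8$. It is the finer bound on the one-step difference that produces an \emph{absolute} constant, and, as noted in the statement, the particular value $3$ is just convenient slack.
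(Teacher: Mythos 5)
Your proof is correct, but it takes a genuinely different route from the paper's. The paper observes that $B_a(n-1)$ is obtained from $B_a(n)$ by deleting a vertex $x$ from a part of maximum size $\bcl{n/a}$, so the weight difference is simply the $p$-degree of that vertex, $d_p(x) = n-1 - p(\bcl{n/a}-1) \ge (1-p/a)n - 2$; a one-line degree calculation with no need to track lower-order terms. You instead expand the explicit formula of Fact \ref{extremalweights} and control the oscillation of the correction term $C(n)$ as $n \bmod a$ steps through its residues. Your argument is a sound computational verification and you are right that the crude bound $0 \le C \le a/8$ would not suffice — the one-step difference bound is the necessary refinement — but the paper's vertex-deletion observation sidesteps this bookkeeping entirely. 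Both approaches generalise routinely to the $R_{b-1}$ case.

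Two small points worth noting for polish rather than correctness: your final display "at least $-1 - \tfrac12 > -3$" silently discards the non-negative middle term $\tfrac{p(1-1/a)}{2}$, which is fine since the slack is large, but you could equally keep it for a tighter bound of $-3/2$; and in the degenerate case $b=2$ for the $R$-inequality one has $b-1=1$ so $r' \equiv 0$ and $C' \equiv 0$, which your argument handles correctly (the oscillation is zero) but is worth a half-sentence to reassure the reader.
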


\begin{proof}
We can obtain $B_a(n-1)$ by deleting a vertex $x$ from a part with maximum size.
Then $w_p(B_a(n)) - w_p(B_a(n-1)) = d_p(x) = n-1 - p(\bcl{n/a}-1) \ge (1-p/a)n-2$,
so we have the required bound in this case.
A similar calculation applies for $w_p(R_{b-1}(n)) - w_p(R_{b-1}(n-1))$.
\end{proof}

Next we estimate the degrees of a vertex in the various colours, given its $p$-degree.
 
\begin{fact}\thlabel{colordeg}
Let $p \in (0,1)$ and $G$ be an igraph on $[n]$.
Suppose $x \in V(G)$ satisfies $d_p(x) \ge \kK_p(I_{a,b}) n - C$.
Then $d_r(x) \ge \tfrac{a-1}{a} n - \tfrac{C}{p}$
and $d_b(x) \ge \tfrac{b-2}{b-1} n -  \tfrac{C}{1-p}$.

Furthermore, if $p \le 1/2$ then $d_g(x) \ge \tfrac{a-1}{a} n - \tfrac{C}{p}$
and if $p \ge 1/2$ then $d_g(x) \ge \tfrac{b-2}{b-1} n -  \tfrac{C}{1-p}$.
\end{fact}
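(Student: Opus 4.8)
The plan is to compare the two expressions for $d_p(x)$ coming from the two viewpoints. In the two-coloured viewpoint $d_p(x) = p\,d_r(x) + (1-p)\,d_b(x)$, where $d_r(x), d_b(x) \le n-1$. In the three-coloured viewpoint, writing $d^-_r(x)$ and $d^-_b(x)$ for the numbers of edges at $x$ that are red and blue respectively (so that $d_r(x) = d^-_r(x) + d_g(x)$ and $d_b(x) = d^-_b(x) + d_g(x)$), we have $d^-_r(x) + d^-_b(x) + d_g(x) = n-1$ and $d_p(x) = p\,d^-_r(x) + (1-p)\,d^-_b(x) + d_g(x)$. Throughout I would use~(\ref{eqk}), which gives $\kK_p(I_{a,b}) \ge 1 - p/a$ and $\kK_p(I_{a,b}) \ge 1 - \tfrac{1-p}{b-1}$, so that the hypothesis on $x$ yields both $d_p(x) \ge (1-p/a)n - C$ and $d_p(x) \ge (1-\tfrac{1-p}{b-1})n - C$; since $C \ge 0$ we may replace $n-1$ by $n$ in the trivial degree bounds without penalty.

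For the bound on $d_r(x)$, I would write $p\,d_r(x) = d_p(x) - (1-p)\,d_b(x) \ge d_p(x) - (1-p)n \ge (1-p/a)n - C - (1-p)n = p\,\tfrac{a-1}{a}\,n - C$ and divide by $p$. The bound on $d_b(x)$ is the symmetric computation: $(1-p)\,d_b(x) = d_p(x) - p\,d_r(x) \ge d_p(x) - pn \ge (1-\tfrac{1-p}{b-1})n - C - pn = (1-p)\,\tfrac{b-2}{b-1}\,n - C$, and then divide by $1-p$.

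For the green degree I would pass to the three-coloured viewpoint and bound the first two terms of $d_p(x)$ by the larger of the coefficients. If $p \le 1/2$, then $p\,d^-_r(x) + (1-p)\,d^-_b(x) \le (1-p)\bigl(d^-_r(x)+d^-_b(x)\bigr) = (1-p)\bigl(n-1-d_g(x)\bigr)$, so $d_p(x) \le (1-p)(n-1) + p\,d_g(x)$, whence $p\,d_g(x) \ge d_p(x) - (1-p)n \ge p\,\tfrac{a-1}{a}\,n - C$ exactly as in the bound for $d_r(x)$, and dividing by $p$ finishes the case. If $p \ge 1/2$, the same step with $p$ as the larger coefficient gives $d_p(x) \le p(n-1) + (1-p)\,d_g(x)$, and one concludes as for $d_b(x)$, using $d_p(x) \ge (1-\tfrac{1-p}{b-1})n - C$.

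There is no real obstacle here; the only points requiring care are choosing the correct viewpoint for each quantity (two-coloured for $d_r$ and $d_b$, three-coloured for $d_g$), invoking the appropriate one of the two lower bounds on $\kK_p(I_{a,b})$ in each case, and being relaxed about additive constants, which is harmless since the stated error terms $C/p$ and $C/(1-p)$ absorb any loss from replacing $n-1$ by $n$.
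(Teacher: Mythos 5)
Your proof is correct and, at bottom, it is the same double-bounding of $d_p(x)$ as in the paper: bound $d_p(x)$ from above in terms of the relevant degree (using the trivial bound on the complementary degree, or, for the green case, the three-coloured identity $d_p(x)=p\,d^-_r(x)+(1-p)\,d^-_b(x)+d_g(x)$ together with $p\lessgtr 1-p$), and from below via the hypothesis. The one genuine streamlining is your observation that, since $\kK_p(I_{a,b})$ is the \emph{maximum} of $1-\tfrac{p}{a}$ and $1-\tfrac{1-p}{b-1}$, both inequalities $d_p(x)\ge(1-\tfrac{p}{a})n-C$ and $d_p(x)\ge(1-\tfrac{1-p}{b-1})n-C$ hold simultaneously, so each target bound can be derived from the convenient one directly. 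The paper instead splits into the cases $p\le\tfrac{a}{a+b-1}$ and $p\ge\tfrac{a}{a+b-1}$, in each case gets one of $d_r,d_b$ for free and recovers the other via an auxiliary inequality such as $\tfrac{p}{a(1-p)}\le\tfrac{1}{b-1}$. Your version saves that case split and the auxiliary verification; the two approaches are otherwise the same and give identical bounds.
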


\begin{proof}
First we consider the case $p \le \tfrac{a}{a+b-1}$, when $\kK_p(I_{a,b})=1-\tfrac{p}{a}$.
The inequality on $d_r(x)$ follows from $(1-\tfrac{p}{a})n - C \le d_p(x) \le (1-p)n + pd_r(x)$.
If $p \le 1/2$ we have the same inequality with $d_g(x)$ in place of $d_r(x)$.
We also have $(1-\tfrac{p}{a})n - C \le d_p(x) \le pn + (1-p)d_b(x)$,
so $d_b(x) \ge (1 - \tfrac{p}{a(1-p)})n - \tfrac{C}{1-p}$.
Since $\tfrac{p}{a(1-p)} \leq \tfrac{1}{b-1}$ for $p \le \tfrac{a}{a+b-1}$
we obtain the inequality on $d_b(x)$. 
If $p \ge 1/2$ we have the same inequality with $d_g(x)$ in place of $d_b(x)$.

Now suppose that $p \ge \tfrac{a}{a+b-1}$, so $\kK_p(I_{a,b})=1-\tfrac{1-p}{b-1}$.
The inequality on $d_b(x)$ follows from $(1-\tfrac{1-p}{b-1})n - C \le d_p(x) \le pn + (1-p)d_b(x)$.
If $p \ge 1/2$ we have the same inequality with $d_g(x)$ in place of $d_b(x)$.
We also have $(1-\tfrac{1-p}{b-1})n - C \le d_p(x) \le (1-p)n + pd_r(x)$,
so $d_r(x) \ge (1 - \tfrac{1-p}{(b-1)p})n - \tfrac{C}{p}$.
Since $\tfrac{(1-p)}{(b-1)p} \leq \tfrac{1}{a}$ for $p \ge \tfrac{a}{a+b-1}$
we obtain the inequality on $d_r(x)$. 
If $p \le 1/2$ we have the same inequality with $d_g(x)$ in place of $d_r(x)$.
\end{proof}

We conclude this preliminary subsection with two results on multipartite Tur\'an problems.
The first is folklore; for a proof see e.g.\ \cite[Lemma 3.3]{BMSW}.

\begin{lemma} \label{partiteturan}
Suppose $G$ is a graph with $e(G) < m^2$ and $V(G)$ is partitioned into parts of size $m$.
Then we can find an independent set with one vertex in each part.
\end{lemma}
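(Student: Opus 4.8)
The plan is to use a first-moment (probabilistic) argument. Let $V_1,\dots,V_k$ be the parts, each of size $m$, so $V(G) = V_1 \cup \dots \cup V_k$, and choose a random transversal $T = \{v_1,\dots,v_k\}$ by picking $v_i \in V_i$ uniformly at random, independently for each $i$. By construction $T$ contains exactly one vertex in each part, so it suffices to show that with positive probability $T$ contains no edge of $G$; then $T$ is the required independent set.

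Let $Z$ be the number of edges of $G$ with both endpoints in $T$. An edge with both endpoints in a single part $V_i$ can never lie inside $T$, since $|T \cap V_i| = 1$, so only edges joining two distinct parts can contribute to $Z$. For such an edge $uv$ with $u \in V_i$ and $v \in V_j$, $i \neq j$, the events $u \in T$ and $v \in T$ are independent and each has probability $1/m$, so $\mb{P}(u \in T \text{ and } v \in T) = 1/m^2$. By linearity of expectation, $\mb{E}[Z] \le e(G)/m^2 < 1$. Since $Z$ is a non-negative integer-valued random variable with $\mb{E}[Z] < 1$, we have $\mb{P}(Z = 0) > 0$, so some transversal $T$ spans no edge of $G$, which completes the proof.

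There is no substantial obstacle here; the only point worth noting is that edges inside a single part are automatically harmless, so that the relevant edge count is bounded by $e(G) < m^2$, which is exactly what is needed to offset the per-edge probability $1/m^2$ and force the expected number of bad edges below $1$. (One could alternatively proceed by a greedy selection or by induction on the number of parts, but the first-moment computation is the cleanest.)
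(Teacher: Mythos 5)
Your proof is correct. The paper gives no argument of its own but refers to \cite[Lemma 3.3]{BMSW}, where the lemma is established by what amounts to the same first-moment/averaging computation you give (there are $m^k$ transversals, each cross-edge lies in at most $m^{k-2}$ of them, and $e(G)\cdot m^{k-2} < m^k$), so the two proofs are essentially identical, just phrased probabilistically versus by counting.
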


We deduce the following variant (it is not sharp, but suffices for our purposes).

\begin{lemma} \label{partiteturanb}
Suppose $G$ is a graph with $e(G) < (m/2t)^2$ and $V(G)$ is partitioned into parts of size $m>2t$.
Then we can find an independent set that has $t$ vertices in each part.
\end{lemma}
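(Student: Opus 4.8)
The plan is to deduce this from Lemma \ref{partiteturan} by refining the given partition into smaller classes. Write $m' = \lfloor m/t \rfloor$. Since $m > 2t$ we have $m/t > 2$, so $m' \ge m/t - 1 \ge m/(2t) \ge 1$; in particular $m' \ge 1$ and $(m')^2 \ge (m/2t)^2 > e(G)$. First I would partition each of the given parts (of size $m$) into $t$ pairwise disjoint subparts of size exactly $m'$, discarding the at most $t-1$ leftover vertices in each part. If the original partition had $k$ classes, this produces a partition of a subset $V' \subseteq V(G)$ into $tk$ classes, each of size $m'$.

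Next I would apply Lemma \ref{partiteturan} to the induced subgraph $G[V']$ with the refined partition into $tk$ classes of size $m'$. Passing to an induced subgraph only decreases the number of edges, so $e(G[V']) \le e(G) < (m/2t)^2 \le (m')^2$, which is exactly the hypothesis needed. The lemma then yields an independent set of $G$ containing precisely one vertex from each of the $tk$ subparts. Since each original part was subdivided into exactly $t$ subparts, this independent set has exactly $t$ vertices in every original part, which is what we want.

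There is no real obstacle here; the only point needing care is the elementary inequality $\lfloor m/t \rfloor \ge m/(2t)$, which holds precisely because $m > 2t$ (equivalently $m/(2t) > 1$ forces $m/t - 1 \ge m/(2t)$), and this is exactly why the hypothesis is stated with $m > 2t$ rather than merely $m \ge t$.
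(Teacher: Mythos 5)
Your proof is correct, and it takes a genuinely different route from the paper's. The paper's argument is two-step: it first uses Tur\'an's theorem to greedily extract about $m/2t$ pairwise-disjoint independent sets of size $t$ from each part (each step applied to the $\ge m/2+1$ vertices remaining), then forms an auxiliary graph whose vertices are these independent $t$-sets (joining two if any $G$-edge runs between them), and finally applies Lemma \ref{partiteturan} to that auxiliary graph with parts of size roughly $m/2t$. Your proof instead subdivides each part of size $m$ into $t$ blocks of size $\lfloor m/t\rfloor$, checks $\lfloor m/t\rfloor \ge m/2t$ directly from $m>2t$, and applies Lemma \ref{partiteturan} once to the refined partition, with no Tur\'an step and no auxiliary graph. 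What you gain is economy and transparency: the single application of Lemma \ref{partiteturan} does all the work, the floor/ceiling bookkeeping is a one-line inequality, and you sidestep the slightly delicate count of how many disjoint $t$-sets the greedy Tur\'an extraction actually yields (the paper's ``$\lceil m/2t\rceil$'' is stated a bit loosely and would need care in boundary cases, though it is harmless in the regime where the lemma is applied). The paper's route, for its part, is the more natural one if one already has Tur\'an's theorem in hand and thinks of the target independent set as being assembled from pre-built $t$-blocks; the two arguments are of comparable length.
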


\begin{proof}
By Tur\'an's theorem, any set of $m/2 + 1$ vertices contains an independent set of size $t$.
Thus we can choose $\bcl{m/2t}$ vertex-disjoint independent sets of size $t$ inside each part.
Consider the auxiliary graph $H$ whose vertices correspond to the chosen independent sets,
where we join two vertices of $H$ if there is any edge of $G$ between the corresponding independent sets.
Then $e(H) \le e(G) < (m/2t)^2$, so by Lemma \ref{partiteturan}
we can find an independent set in $H$ with one vertex in each part.
This corresponds to an independent set in $G$ with $t$ vertices in each part.
\end{proof}

\subsection{Proof of Theorem \ref{exact}.}

Suppose $G$ is an $I_{a,b}$-free igraph on $[n]$ with maximum $p$-weight.
Note that $w_p(G) \ge w_p(R_{b-1}(n))$ and $w_p(G) \ge w_p(B_a(n))$.
By Fact \ref{compareweights} it suffices to show that 
$G$ is $R_{b-1}(n)$ or $B_a(n)$; if $w_p(R_{b-1}(n)) \ne w_p(B_a(n))$
it will follow that $G$ is the one of larger $p$-weight.
We claim that we can assume $\dD_p(G) \ge \kK_p(I_{a,b}) n - 4$.
For suppose we have proved the theorem under this assumption for $n \ge n_0$.
Consider $n > n_0^2$ and form igraphs $G = G_n, G_{n-1}, \dots$, 
where if $\dD_p(G_i) < \kK_p(I_{a,b}) i - 4$ 
we form $G_{i-1}$ by deleting a vertex of minimum $p$-degree from $G_i$.
We claim that this process must terminate at some $G_m$ with $m>n_0$.
Otherwise, we obtain $w_p(G_{n_0}) > w_p(G) - \sum_{i=n_0+1}^n (\kK_p(I_{a,b}) i - 4)$,
so by Fact \ref{difference} we have $w_p(G_{n_0}) > n-n_0 > \tbinom{n_0}{2}$,
which is a contradiction. Now $\dD_p(G_m) \ge \kK_p(I_{a,b})m - 4$, so by assumption
we have $w_p(G_m) \le w_p(R_{b-1}(m))$ if $p \ge \tfrac{a}{a+b-1}$
and $w_p(G_m) \le w_p(B_a(m))$ if $p \le \tfrac{a}{a+b-1}$.
If $p \le \tfrac{a}{a+b-1}$, by Fact \ref{difference} we have
\[w_p(G) \le w_p(B_a(m)) + \sum_{j=m+1}^n (w_p(B_a(j)) - w_p(B_a(j-1))-1) 
= w_p(B_a(n)) - (n-m),\] 
so we must have $n=m$ and $G=G_n=G_m$, so the theorem follows.
A similar calculation applies if $p \ge \tfrac{a}{a+b-1}$.
Therefore we can assume $\dD_p(G) \ge \kK_p(I_{a,b}) n - 4$.

Next, let $\eps = p(1-p)(10ab)^{-4}$ and $n$ be large. By Theorem \ref{stability}, 
we can modify at most $\eps^2 n^2/2$ pairs of $G$ to obtain $G' \in \mc{G}_p(n)$.
Let $F$ be the set of vertices $v$ that are incident to at least $\eps n$ modified pairs.
Then $|F| \le \eps n$. We divide the remainder of the proof into two cases,
according to whether $G' \in \mc{B}_a(n)$ or $G' \in \mc{R}_{b-1}(n)$.

\nib{Case 1: $G' \in \mc{B}_a(n)$.} 
Note that $p \le \tfrac{a}{a+b-1}$ by definition of $\mc{G}_p(n)$.
Let $(P'_1,\dots,P'_a)$ be the partition of $V(G')$ such that each $P'_i$ is a blue clique in $G'$.
Note that $||P'_i|-n/a| < \eps n/p$ for all $i$, otherwise, by Fact \ref{turansize}, we have 
$w_p(G') \le (1-p) \tbinom{n}{2} + p((1-1/a)n^2-(\eps n/p)^2)/2$, so by Fact \ref{extremalweights},
we have $w_p(G) \le \eps^2 n^2/2 + w_p(G') < w_p(B_a(n))$, which is a contradiction.
Let $P^*_i = P'_i \sm F$ for $1 \le i \le a$ and let $(P_1,\dots,P_a)$ be a partition of $V(G)$ 
such that $P^*_i \sub P_i$ for $1 \le i \le a$ minimising the number of red edges of $G$ inside parts 
(i.e.\ minimising $\sum_{i=1}^a |G[P_i]_r|$).
Since $|F| < \eps n$ we have $||P_i|-n/a| < (p^{-1}+1)\eps n$ for all $i$.

Now we prove a series of statements in the following claim
that gradually pin down the location of the red edges.
The final statement is that the parts $P_1,\dots,P_a$ each contain no red edges;
this implies $w_p(G) \le w_p(B_a(n))$, with equality only if $G=B_a(n)$,
so it will complete the proof in this case.
Note that all colours and degrees are defined with respect to $G$, not $G'$.

\nib{Claim.} Let $\{i,j\} \sub [a]$ with $i \neq j$.
\begin{enumerate}[label=(\roman*), nolistsep]
\item For any $x \in P^*_j$ we have $d_r(x,P^*_j) \le \eps n$.
\item For any vertex $x$ we have $d_r(x) \ge \tfrac{a-1}{a} n - \tfrac{4}{p}$. 
\item For any $x \in P^*_j$ we have $d_r(x,P^*_i) > |P^*_i| - (p^{-1}+4)\eps n$.
\item For any $x \in P_j$ we have $d_r(x,P^*_i) > |P^*_i| - 2n/3a$.
\item For any $x \in P_j$ we have $d_r(x,P^*_j) = 0$.
\item For any $x \in P_j$ we have $d_r(x,P^*_i) > |P^*_i| - (p^{-1}+3)\eps n$.
\item For any $x \in P_j$ we have $d_r(x,P_j) = 0$.
\end{enumerate}

\nib{Proof of Claim.}
For (i), note that if $y \in P^*_j$ and $xy$ is red,
then $xy$ must be modified, so by definition of $F$ we have $d_r(x,P^*_j) \le \eps n$.\\
For (ii), we note that $d_p(x) \ge (1-\tfrac{p}{a})n-4$ and apply Fact \ref{colordeg}.\\
For (iii), note that by (i) there are at least $|P^*_j| - \eps n \ge n/a - (p^{-1}+3)\eps n$ 
vertices $y$ in $P^*_j$ such that $xy$ is not red, so by (ii) the other parts have 
at most $(p^{-1}+3)\eps n + 4/p < (p^{-1}+4)\eps n$ such vertices.\\
For (iv), first note that we can assume $x \in F$, otherwise we are done by (iii).
Now suppose for a contradiction that we have $2n/3a$ vertices $y$ in $P^*_i$ such that 
$xy$ is not red. By (ii), this leaves at most $n/3a+4/p$ such vertices in $P_j$. 
Then $d_r(x,P_j) \ge |P_j| - (n/3a+4/p) > |P_i| - 2n/3a \ge d_r(x,P_i)$,
so moving $x$ to $P_i$ contradicts the minimality of $\sum_{i=1}^a |G[P_i]_r|$.\\
For (v), suppose for a contradiction that $xy$ is red for some $y \in P^*_j$.
By (iii) and (iv), we can choose sets $Z_k \sub P^*_k$ of size $n/6a$ for all $k \ne j$
such that $xz$ and $yz$ are red for all $z \in Z_k$. By Lemma \ref{partiteturanb}
applied to the graph of modified pairs, we can find a set $S$ with $b$ points in each $Z_k$
such that no pair inside $S$ is modified. However, this contradicts 
Fact \ref{findIa} (with $P^*_j$ playing the 
role of the $P_i$ excluded in the statement).\\
Statement (vi) follows from (v) and (ii) in the same way that (iii) followed from (i) and (ii).\\
Statement (vii) follows from (vi) in the same way that (v) followed from (iii) and (iv).
This proves the claim, and so completes the proof of Case 1.

\nib{Case 2: $G' \in \mc{R}_{b-1}(n)$.} 
The proof of this case is similar but slightly different to that of Case 1.
Note that $p \ge \tfrac{a}{a+b-1}$ by definition of $\mc{G}_p(n)$.
Let $(P'_1,\dots,P'_{b-1})$ be the partition of $V(G')$ such that each $P'_i$ is a red clique in $G'$.
Note that $||P'_i|-n/(b-1)| < \eps n/(1-p)$ for all $i$, similarly to Case 1.
Let $P^*_i = P'_i \sm F$ for $1 \le i \le b-1$ and let $(P_1,\dots,P_{b-1})$ be a partition of $V(G)$ 
such that $P^*_i \sub P_i$ for $1 \le i \le b-1$ minimising the number of blue edges of $G$ inside parts.
Since $|F| < \eps n$ we have $||P_i|-n/(b-1)| < ((1-p)^{-1}+1)\eps n$ for all $i$.
Now we gradually pin down the location of the blue edges;
the final statement of the following claim will complete the proof of Case 2, and so of the theorem.

\nib{Claim.} Let $\{i,j\} \sub [b-1]$ with $i \neq j$.
\begin{enumerate}[label=(\roman*), nolistsep]
\item For any $x \in P^*_j$ we have $d_g(x,P^*_j) \le d_b(x,P^*_j) \le \eps n$.
\item For any vertex $x$ we have $d_r(x) \ge \tfrac{a-1}{a} n - \tfrac{4}{p}$
and $d_b(x) \ge \tfrac{b-2}{b-1} n - \tfrac{4}{1-p}$. 
If $p \ge 1/2$ we also have $d_g(x) \ge \tfrac{b-2}{b-1} n - \tfrac{4}{1-p}$.
If $p \le 1/2$ and $x \notin F$ we have $d_g(x) \ge \tfrac{b-2}{b-1} n - ((1-p)^{-1}+3)\eps n/p$.
\item For any $x \in P^*_j$ we have $d_b(x,P^*_i) \ge d_g(x,P^*_i) > |P^*_i| - 2((1-p)^{-1}+3)\eps n/p$.
\item For any $x \in P_j$ we have $d_b(x,P^*_i) > |P^*_i| - 2n/3(b-1)$,
and if $p \ge 1/2$ we have $d_g(x,P^*_i) > |P^*_i| - 2n/3(b-1)$.
\item For any $x \in P_j$ we have $d_b(x,P^*_j) = 0$.
\item For any $x \in P_j$ we have $d_b(x,P^*_i) > |P^*_i| - ((1-p)^{-1}+3)\eps n$,
and if $p \ge 1/2$ we have $d_g(x,P^*_i) > |P^*_i| - ((1-p)^{-1}+3)\eps n$.
\item For any $x \in P_j$ we have $d_b(x,P_j) = 0$.
\end{enumerate}

\nib{Proof of Claim.}
Statement (i) follows from the definition of $F$. \\The first two parts of (ii) follow from 
Fact \ref{colordeg}, as $d_p(x) \ge (1-\tfrac{1-p}{b-1})n-4$. For the third, by (i) we have
$(1-\tfrac{1-p}{b-1})n-4 \le d_p(x) \le d_g(x) + p(|P^*_j|-\eps n) + (1-p)(n-d_g(x)-|P^*_j|+\eps n)$.
Since $|P^*_j|-\eps n \ge n/(b-1) - ((1-p)^{-1}+2)\eps n$ we obtain
\[pd_g(x) \ge (1-\tfrac{1-p}{b-1})n-4 - (1-p)n + (1-2p)(n/(b-1) - ((1-p)^{-1}+2)\eps n)
= p\tfrac{b-2}{b-1}n - 4 - (1-2p)((1-p)^{-1}+2)\eps n),\]
which implies the stated bound on $d_g(x)$. \\Now (iii) follows from (i) and (ii).\\
Statement (iv) follows from minimality of $\sum_{i=1}^a |G[P_i]_b|$ similarly to Case 1.

For (v), suppose for a contradiction that $xy$ is blue for some $y \in P^*_j$.
We consider two subcases according to whether $p \ge 1/2$. Suppose first that $p \ge 1/2$.
By (iii) and (iv), we can choose sets $Z_k \sub P^*_k$ of size $n/6b$ for all $k \ne j$
such that $xz$ and $yz$ are green for all $z \in Z_k$. By Lemma \ref{partiteturanb}
applied to the graph of modified pairs, we can find a set $S$ with $a+1$ points in each $Z_k$
such that no pair inside $S$ is modified. However, 
this contradicts Fact \ref{findIb}
(with $P_j$ playing the 
role of the $P_i$ excluded in the statement),
so (v) holds in the subcase $p \ge 1/2$. Now suppose that $p<1/2$.
As $p \ge \tfrac{a}{a+b-1}$ we have $a<b-1$, so as $a \ge 2$
we have $\tfrac{a-1}{a} \ge \tfrac{3}{2(b-1)}$.
By (i) and (ii) the number of common red neighbours of $x$ and $y$ outside of $P^*_j$ 
is at least \begin{align*} d_r(x) + d_g(y) - n - d_g(y,P^*_j) 
& \ge \tfrac{a-1}{a} n - \tfrac{4}{p} + \tfrac{b-2}{b-1} n - ((1-p)^{-1}+3)\eps n/p - n - \eps n \\
& \ge \tfrac{n}{2(b-1)} - ((1-p)^{-1}+5)\eps n/p \ge n/3b. \end{align*}
Then for some $j' \ne j$ we can fix $Z'_{j'} \sub P^*_{j'}$ of size $n/3b^2$
such that $xz$ and $yz$ are red for all $z \in Z'_{j'}$.
Also, by (iii) and (iv), we can choose sets $Z_k \sub P_k$ of size $n/3b^2$ for all $k \ne j$
such that $xz$ and $yz$ are blue for all $z \in Z_k$, and $Z_{j'} \cap Z'_{j'} = \es$.
By Lemma \ref{partiteturanb} applied to the graph of modified pairs, we can find a set $S$ 
with $a+1$ points in each $Z_k$ and in $Z'_{j'}$ such that no pair inside $S$ is modified. 
However, this contradicts Fact \ref{findIb} (with $P^*_j$ playing the 
role of the $P_i$ excluded in the statement).

Finally, (vi) follows from (v) and (ii), and (vii) follows by the same proof as for (v),
using (vi) in place of (iii) and (iv). Thus we have finished the proof of the theorem. \qed

\section{Concluding remarks}

There are several natural directions for future research arising from our paper.
As discussed in the introduction, the typical structure of graphs in a property 
(with various meanings of `typical') is much better understood for 
monotone than for hereditary properties, so one may ask to close this gap.
However, the results of Marchant and Thomason \cite{MT} show that 
the associated extremal problem for two weighted colours can be very difficult to analyse 
even for examples that at first sound innocuous (e.g.\ forbidding an induced $K_{3,3}$).
Nevertheless, we expect that there are other simple examples besides 'eyes'
that can be treated by methods similar to those in our paper.

There is much left to be done even just for eyes.
Firstly, there is the case $a=1$. As remarked earlier, Lemma \ref{uniquetype} fails in this case.
Indeed, we consider the following example from \cite[Theorem 3.27]{MT}.
Let $p=1/b$ and $\tau$ be any type whose vertices are red and in which the green edges 
form a connected $(b-2)$-regular graph, the other edges being blue. Then $\tau$ is $p$-core, 
does not describe $I_{1,b}$ and $\lL_p(\tau)=1-1/b=\kK_p(I_{1,b})$.
Furthermore, writing $C_\tau(n)$ for the corresponding blowup construction on $n$ vertices,
if $\tau$ has $k$ vertices, and $n$ is divisible by $k$ but not by $b-1$,
then calculations as in Fact \ref{extremalweights} show that
$w_p(C_\tau(n)) > w_p(R_{b-1}(n)) > w_p(B_1(n))$,
so Theorem \ref{exact} also fails for $a=1$.
This suggests that Theorem \ref{eye-free} may also fail in this case,
although our method of proof is not precise enough to decide this.

% calculation: a=1, p=1/b, kK = 1-1/b, degrees:
% $n-1 - (1-p)(n/k-1) - p(k-b+1)n/k = n - (1-1/b)n/k - (1/b-1/k+1/bk)n - p = (1-1/b)n - p$
% edges: $(kK n^2-pn)/2 = kK \tbinom{n}{2} + (kK-p)n/2$

Secondly, there is the problem of obtaining the analogue of \cite{BMSW} for eyes:
for which $m$ is it true that with high probability a random $I_{a,b}$-ifree graph $G$ 
with $m$ edges satisfies $\chi(G)=a$ or $\chi(\ov{G})=b-1$?
Note that in this question we have altered Theorem \ref{eye-free} in two ways:
we have changed the random graph model from binomial to fixed size,
and have strengthened the conclusion by requiring the colouring properties
for $G$ itself, rather than some graph close to $G$. It is natural to think that 
this question could be answered by adapting the methods of \cite{BMSW}.
However, even the change of model seems to pose some difficulties:
while the natural coupling between the $p$-binomial and $p\tbinom{n}{2}$-edge models
gives them very similar behaviour for monotone properties,
this is not at all clear for hereditary properties,
and moreover, direct calculations seem to lead to a harder extremal problem.
Indeed, suppose that $G \sim G_{n,m}$ is a uniformly random graph
with $n$ vertices and $m = pN$, writing $N = \tbinom{n}{2}$.
Let $C$ be an igraph on $n$ vertices with $xN$ green edges and $yN$ (only) red edges.
Then $\mb{P}(G \sub C) = \tbinom{xN}{(p-y)N} \tbinom{N}{m}^{-1}$,
so $N^{-1} \log_2 \mb{P}(G \sub C) \sim xH((p-y)/x)-H(p)$, where $H$ is the entropy function.
Thus we are led to a nonlinear optimisation problem in two weighted colours,
as opposed to the linear problem that arises for the binomial model.

This naturally leads us to generalise the coloured extremal problem.
Rather than optimising some particular function of (red, blue),
can one describe the two-dimensional region consisting of their possible values?
(More precisely, we are interested in proportional values achievable for arbitrarily large $n$.)
Suppose $C$ is an $I_{a,b}$-free igraph on $[n]$ with $RN$ red edges and $BN$ blue edges.
Our results show that $(R,B)$ is within the region bounded by the lines
$R=1$, $B=1$, $R+B=1$ and $(b-1)R+aB=a+b-2$ (ignoring $o(1)$ errors).
Moreover, we can achieve $(1,1-1/a)$ and $((b-2)/(b-1),1)$
but no interior point of $(b-1)R+aB=a+b-2$ (by Theorem \ref{stability}), 
so the region is not convex. 
A description of the (red, blue) region would enable one to optimise any two variable function, 
such as the nonlinear function problem described above for the fixed size model,
or the minimum of $R$ and $B$, which was considered by Diwan and Mubayi \cite{DM},
for the more general problem in which white pairs are allowed.
Our proof of Theorem \ref{exact} still works allowing $o(n^2)$ white pairs,
but the result may well be (roughly) true allowing any number of white pairs;
this is a reformulation of \cite[Conjecture 16]{DM},
we say `roughly' because allowing white pairs introduces another possible extremal example,
namely a green complete $(a+b-1)$-partite graph with white pairs inside the parts.

\medskip

\nib{Acknowledgements.}
We thank the anonymous referees for helpful comments and corrections.

\end{document}